\newtheorem{theorem}{Theorem}[section]
\newtheorem*{definition}{Definition}
\newtheorem{lemma}[theorem]{Lemma}
\newcommand\cf{{\em c.f.~}}
\newcommand\ie{{\em i.e., }}
\DeclareMathOperator\ad{ad}
\DeclareMathOperator\Aut{Aut}
\DeclareMathOperator\End{End}
\DeclareMathOperator\id{id}
\DeclareMathOperator\Sym{Sym}
\newcommand\la{\langle}
\newcommand\ra{\rangle}
\newcommand\size[1]{\lvert #1\rvert}
\newcommand\tensor{\otimes}
\newcommand\FF{\mathbb{F}}
\newcommand\kk{\mathbf{k}}
\newcommand\NN{\mathbb{N}}
\newcommand\QQ{\mathbb{Q}}
\newcommand\RR{\mathbb{R}}
\newcommand\ZZ{\mathbb{Z}}
\renewcommand\cal{\mathcal}
\newcommand\al{\alpha}
\newcommand\bt{\beta}
\newcommand\gm{\gamma}                
\newcommand\ep{\varepsilon}
\newcommand\kp{\kappa}
\newcommand\lm{\lambda}
\newcommand\sh[1]{(\mathrm #1)}
\newcommand\gap{\vspace{2em}\noindent}
\renewcommand*\env@matrix[1][\arraystretch]{%
        \edef\arraystretch{#1}%
        \hskip -\arraycolsep
        \let\@ifnextchar\new@ifnextchar
        \array{*\c@MaxMatrixCols c}}
\newcommand{\specialcell}[3][1]{%
        \renewcommand{\arraystretch}{#1}%
        \begin{tabular}{#2}#3\end{tabular}}
\begin{document}
\abovedisplayskip=0.5em
\belowdisplayskip=0.5em

\title{Generalised dihedral subalgebras from the Monster}
\author{F. Rehren}
\date{10th October, 2014}
\maketitle

\begin{abstract}
        The conjugacy classes of the Monster which occur in the McKay observation
        correspond to the isomorphism types
        of certain $2$-generated subalgebras of the Griess algebra.
        Sakuma, Ivanov and others showed that these subalgebras
        match the classification of vertex algebras generated by two Ising conformal vectors,
        or of Majorana algebras generated by two axes.
        In both cases, the eigenvalues $\al,\bt$ parametrising the theory
        are fixed to $\sfrac{1}{4},\sfrac{1}{32}$.
        We generalise these parameters and the algebras which depend on them,
        in particular finding the largest possible (nonassociative) axial algebras
        which satisfy the same key features,
        by working extensively with the underlying rings.
        The resulting algebras admit an associating symmetric bilinear form
        and satisfy the same $6$-transposition property as the Monster;
        $\sfrac{1}{4},\sfrac{1}{32}$ turns out to be distinguished.
\end{abstract}

        The {\em Monster group}, the largest of the sporadic simple groups,
        was first constructed as the automorphism group of the {\em Griess algebra},
        a nonassociative commutative algebra of dimension $196884$.
        Soon an "affinization" $V^\natural$,
        containing (a twisted version of) the Griess algebra in its weight-$2$ subspace,
        was constructed \cite{flm};
        later Borcherd's introduction of {\em vertex algebras} \cite{borcherds}
        unified $V^\natural$ with affine Kac-Moody Lie algebras
        and the two-dimensional conformal field theories of string theory.
        In this way vertex algebras let us view an exceptional object, the Monster,
        as a point in a general theory.

        A key part of the Griess construction
        was that certain idempotents in the algebra
        are in bijection with the involutions in the $\sh{2A}$-conjugacy class of the Monster.
        This also holds in $V^\natural$.
        It was Miyamoto's profound insight
        that every idempotent in the {\em weight-$2$ subspace} of {\em any} (OZ-type) vertex algebra
        give rise to an involution in the automorphism group \cite{miyamoto} of the vertex algebra.
        Idempotents in this context are called {\em $c$-conformal vectors},
        where $c$ is a parameter which connects the idempotent
        to the remarkable representation theory of the infinite-dimensional Virasoro algebra.
        In our case, the $\sfrac{1}{2}$-conformal vectors in $V^\natural$
        induce the involutions of type $\sh{2A}$ in the Monster.

        Sakuma \cite{sakuma} developed Miyamoto's investigation \cite{miyamoto-s3}
        to now show that the product of two involutions induced from any two $\sfrac{1}{2}$-conformal vectors has order at most $6$.
        This implies as a corollary the celebrated fact that the Monster is a $6$-transposition group.
        Furthermore, Sakuma proved that any two $\sfrac{1}{2}$-conformal vectors
        generate a vertex algebra of one of $9$ isomorphism types,
        all occurring in $V^\natural$.
        The same types were also studied by Norton in the Griess algebra \cite{conorton},
        where the isomorphism type is determined
        by the conjugacy class of the product of the corresponding $\sh{2A}$-involutions in the Monster:
        the possibilities are $\sh{1A},\sh{2A},\sh{3A},\sh{4A},\sh{5A},\sh{6A},\sh{4B},\sh{2B},\sh{3C}$.
        Remarkably, these are exactly the classes that McKay observed to label
        the affine $E_8$ Dynkin diagram.

        A radical further step was taken by Ivanov \cite{ivanov}
        in axiomatising some properties of weight-$2$ subspaces of OZ-type vertex algebras generated by $\sfrac{1}{2}$-conformal vectors,
        under the name of {\em Majorana algebras}.
        In this new context Sakuma's theorem still holds \cite{ipss,hrs-sak}
        (now leading to the list of the $9$ finite-dimensional {\em Norton-Sakuma algebras});
        furthermore the Griess algebra is a Majorana algebra,
        and many other small finite groups have been realised
        as automorphism groups of Majorana algebras \cite{akos}.
        This puts the Monster into a family of representations of groups
        on nonassociative commutative algebras.
    The axioms are quite different to those of vertex algebras
        but still includes the parameters $\sfrac{1}{4},\sfrac{1}{32}$
        dictated by the Virasoro algebra at central charge $\sfrac{1}{2}$.

        In this text, we generalise the theory to use the formal parameters $\al,\bt$.
        This enlarges our story to a context introduced in \cite{hrs-sak,hrs}:
        {\em axial algebras} delineate a new class of commutative nonassociative algebras.
        It also allows us to deduce results for $c$-conformal vectors when $c\neq\sfrac{1}{2}$.
        In contrast to well-known nonassociative algebras using word relations,
        such as Lie and Jordan algebras,
        our nonassociative algebras are controlled by {\em fusion rules}.

        \begin{table}[h]
        \begin{center}
        \renewcommand{\arraystretch}{1.5}
        \begin{tabular}{|c||c|c|c|c|}
                \hline
                        \;\;$\star$\;\; & $1$ & $0$ & $\al$ & $\bt$ \\
                \hline\hline
                        $1$ & $\{1\}$ & $\emptyset$ & $\{\al\}$ & $\{\bt\}$ \\
                \hline
                        $0$ &        & $\{0\}$ & $\{\al\}$ & $\{\bt\}$ \\
                \hline
                        $\al$ &        &        & $\{1,0\}$ & $\{\bt\}$ \\
                \hline
                        $\bt$ &        &        &        & $\{1, 0, \al\}$ \\
                \hline
        \end{tabular}
        \caption{Ising fusion rules $\Phi(\al,\bt)$}
                \label{tbl-ising}
        \end{center}
        \end{table}

        The key definition is a {\em $\Phi$-axis}:
        an idempotent, whose eigenspaces multiply according to the fusion rules $\Phi$.
        If structure constants describe the multiplication of elements,
        fusion rules can be understood as describing the multiplication of submodules.
        We study the {\em Ising}\footnote{
            Named after the Ising model in statistical physics,
            whose solution involves the Virasoro algebra at $c=\sfrac{1}{2}$.
        } {\em fusion rules} $\Phi(\al,\bt)$ of Table \ref{tbl-ising}.
        The specialisation to $\Phi(\sfrac{1}{4},\sfrac{1}{32})$
        recovers the Majorana axes and $\sfrac{1}{2}$-conformal vectors.
        Crucially, the $\ZZ/2$-grading on $\Phi(\al,\bt)$
        means that every $\Phi(\al,\bt)$-axis induces an involution in the automorphism group.

        The simpler case of {\em Jordan-type fusion rules} $\Phi(\al)\subseteq\Phi(\al,\bt)$
        was completely solved in \cite{hrs}
        and allows a remarkable new characterisation of $3$-transposition groups
        in the generic case of $\al\neq0,1$;
        in the special case $\al=\sfrac{1}{2}$, it includes some Jordan algebras \cite{tomfelix}.

        We find the axial generalisation of each Norton-Sakuma algebra,
        and thereby introduce new examples of nonassociative algebras with good properties.
        Our results have an algebro-geometric flavour,
        in that to each of our algebras $A_{\al,\bt}$ over a field $\FF$
        we can attach a solution set $\{(\al,\bt)\mid A_{\al,\bt}\text{ exists}\}\subseteq\FF^2$.
        For each Norton-Sakuma algebra we find the irreducible variety of $(\al,\bt)$
        that generalises the algebra.
        This is plotted in Figure \ref{fig-plot}.
        The intersection of all varieties (indeed, of any two not including $\sh{3A'_{\al,\bt}}$)
        is the distinguished point $(\al,\bt) = (\sfrac{1}{4},\sfrac{1}{32})$.

        \begin{figure}[ht]
        \begin{center}
        \begin{tikzpicture}[scale=5.5,domain=-0.2:1.2]
                \fill[red!30] (-0.2,-0.2) rectangle (1.2,1.2);
                \draw[white,thick] (0.5,-0.2) -- (0.5,1.2);
                \draw[red!70] (1.3,0.3) node {$\boldsymbol{\sh{3A'_{\al,\bt}}}$};

                \draw[cyan,thick] (0.25,-0.2) -- (0.25,1.2);
                \draw[cyan] (0.15,0.4) node {$\boldsymbol{\sh{4A_\bt}}$};
                \draw[white,thick] (0.25,0.125) circle[radius=0.001];
                \draw[white,thick] (0.25,0.0625) circle[radius=0.001];

                \draw[blue,thick] plot (\x,\x^2/2);
                \draw[blue] (0.7,0.15) node {$\boldsymbol{\sh{4B_\al}}$};

                \draw[green!80!black,thick,domain=-0.125:1.2] plot (\x,0.625*\x-0.125);
                \draw[green!80!black] (0.8,0.46875) node {$\boldsymbol{\sh{5A_\al}}$};

                \draw[purple,thick] plot[smooth] file {6a-curve1.dat};
                \draw[purple] (0.50,0.9) node[anchor=east] {$\boldsymbol{\sh{6A_\al}}$};
                \draw[white,thick] (0.33333,0.08333) circle[radius=0.001];
                \draw[white,thick] (0.4,0.2) circle[radius=0.001];

                \draw[->,white,thick] (-0.2,0) -- (1.2,0);
                \draw (1.1,0) node[anchor=north] {$\boldsymbol\al$};
                \draw[->,white,thick] (0,-0.2) -- (0,1.2);
                \draw (0,1.1) node[anchor=east] {$\boldsymbol\bt$};

                \draw[white,thick] (-0.2,1) -- (1.2,1);
                \draw[white,thick] (1,-0.2) -- (1,1.2);
                \draw[white,thick] (-0.2,-0.2) -- (1.2,1.2);

                \draw (0,1) node[anchor=east] {$\boldsymbol1$};
                \draw (1,0) node[anchor=north] {$\boldsymbol1$};
                \draw (0.5,0) node[anchor=north] {$\boldsymbol{\sfrac{1}{2}}$};
                \draw (0.8,0.8) node[] {${\al=\bt}$};
                \draw (0.13,0.06) node[] {\footnotesize ${(\sfrac{1}{4},\sfrac{1}{32})}$};
                \draw (0.25,0.03125) node[] {${*}$};
                \draw (0,0) node[anchor=north east] {$\boldsymbol{0}$};

        \end{tikzpicture}
        \end{center}
        \caption{The existence of Norton-Sakuma-like algebras
                for $(\al,\bt)\in\RR^2$}
        \label{fig-plot}
        \end{figure}
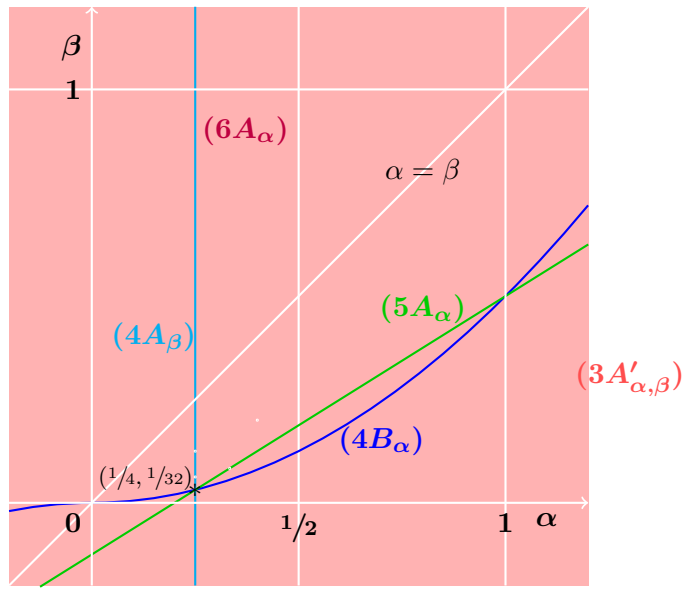

    \gap
        We now give an outline of the paper
        and explain results and methods in more detail.

        In Section \ref{sec-prep},
        we recall our framework for axial algebras,
        which are commutative, nonassociative algebras generated by idempotents
        whose eigenvectors satisfy fusion rules.
        We also introduce Miyamoto involutions,
        the {\em Frobenius} property of having an associating bilinear form,
        and prove several key lemmas.
        We briefly review some results from Majorana theory at the end,
        and results from the Jordan-type fusion rules $\Phi(\al)$.

        The main result of Section \ref{sec-universe}
        is a generalisation of \cite{hrs-sak}'s Theorem 1.1,
        where in particular we no longer require the existence of a bilinear associating form:
        \begin{theorem}[Corollary to Theorem \ref{thm-universe}]
                \label{thm-universal}
                There exists a ring $R$ and a $\Phi$-axial $R$-algebra $A$ such that,
                if $B$ is an $S$-algebra finitely generated by $\Phi$-axes,
                then $S$ is an associative $R$-algebra
                and $B$ is a quotient of $A\tensor_R S$.
        \end{theorem}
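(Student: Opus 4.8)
The plan is to realise $(R,A)$ as a \emph{free} object, so that the universal property reduces to the freeness of $A$ together with functoriality of base change. Since the fusion rules $\Phi(\al,\bt)$ require the eigenvalues $1,0,\al,\bt$ to be distinct with existing eigenprojections, I would take $R=\ZZ[\al,\bt]$ localised so that $\al,\bt,1-\al,1-\bt,\al-\bt$ become units; this is exactly the data needed to form the Lagrange projector $P_\lambda=\prod_{\mu\neq\lambda}(\ad_a-\mu)(\lambda-\mu)^{-1}$ onto the $\lambda$-eigenspace of an axis $a$. Over a countably infinite generating set $\Omega$ (large enough that every finitely generated example is accommodated), form the free commutative nonassociative $R$-algebra $M$ on $\{x_\omega\}_{\omega\in\Omega}$, and let $I\normal M$ be the ideal generated by the $\Phi$-axis relations for each generator: idempotency $x_\omega x_\omega=x_\omega$; the relation $\ad_{x_\omega}(\ad_{x_\omega}-1)(\ad_{x_\omega}-\al)(\ad_{x_\omega}-\bt)\,y=0$ for all $y$, which forces $\ad_{x_\omega}$ to be semisimple with spectrum in $\{1,0,\al,\bt\}$; and the fusion relations $P_\nu\bigl(P_\lambda(u)\,P_\mu(v)\bigr)=0$ for all $u,v$ and every $\nu\notin\Phi(\lambda,\mu)$. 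Setting $A=M/I$, each image of $x_\omega$ is by construction a $\Phi$-axis, so $A$ is a $\Phi$-axial $R$-algebra, and $A\neq0$ since it surjects onto the one-dimensional algebra $Ra$ with $a^2=a$.

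For the universal property, let $B$ be an $S$-algebra finitely generated by $\Phi$-axes $b_1,\dots,b_n$. Its parameters are elements $\al_B,\bt_B\in S$ for which $\al_B,\bt_B,1-\al_B,1-\bt_B,\al_B-\bt_B$ are invertible (as $B$ must carry the eigenprojections), so $\al\mapsto\al_B,\ \bt\mapsto\bt_B$ defines a ring homomorphism $R\to S$; this is precisely the statement that $S$ is an associative $R$-algebra. As base change is right-exact and the relations defining $I$ are obtained by applying $R$-linear operators (the projectors have coefficients in $R$), one checks $A\tensor_R S\cong M_S/I_S$, the free $\Phi$-axial $S$-algebra on $\Omega$, where $M_S$ is free commutative nonassociative on $\Omega$ and $I_S$ is generated by the base-changed axis relations. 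Now send $x_1,\dots,x_n\mapsto b_1,\dots,b_n$ and every other generator to $b_1$: this gives a surjective $S$-algebra map $M_S\to B$ (surjective as the $b_i$ generate $B$), and since every image is a $\Phi$-axis it kills $I_S$, hence factors through the desired surjection $A\tensor_R S\twoheadrightarrow B$.

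The step I expect to be the main obstacle is not the final diagram-chase but the isomorphism $A\tensor_R S\cong M_S/I_S$: one must ensure that \emph{every} defining relation is encoded with coefficients in $R$ alone, so that the class of $\Phi$-axial algebras is cut out by relations stable under arbitrary base change. The fusion relations are the delicate case, since the eigenprojectors are a priori operators depending on the whole algebra; expressing them as $R$-polynomials in $\ad_a$ is possible only after inverting the eigenvalue differences, which is what forces both the localisation defining $R$ and the requirement that each target $S$ receive these parameters as units. Getting this ring-theoretic point right is exactly what lets the Frobenius-form hypothesis of \cite{hrs-sak} be dropped, the whole argument resting on the behaviour of the fusion relations rather than on any associating bilinear form; the sharper control of $A$ as an $R$-module — that the free object is genuinely tractable — is then the province of Theorem \ref{thm-universe}.
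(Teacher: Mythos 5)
Your construction omits the axiom that is the actual crux of Theorem \ref{thm-universal}: \emph{primitivity}. In this paper a $\Phi$-axis is by definition a semisimple \emph{primitive} idempotent, $A^a_1=\la a\ra$ (see the definition of a $\Phi$-axis in Section \ref{sec-prep}), and your ideal $I$ enforces only idempotency, the annihilator polynomial of $\ad$, and the fusion relations --- nothing forces the $1$-eigenspace of a generator to be spanned by that generator. The omission is fatal, not cosmetic. Consider the $2$-dimensional algebra over your $R$ with basis $\{a,e\}$ and products $aa=a$, $ae=e$, $ee=e$: both $a$ and $e$ are semisimple idempotents with spectrum in $\{1,0\}\subseteq\Phi(\al,\bt)$ satisfying the fusion rules (one checks $A^e_1=\la e\ra$, $A^e_0=\la a-e\ra$, $e(a-e)=0$, $(a-e)(a-e)=a-e$), so by your own universal-property argument it is a quotient of $M/I$ by a generator-preserving map. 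But since all eigenvalue differences are invertible in $R$, the eigenprojectors are polynomials in $\ad$ with coefficients in $R$, so eigenspaces map onto eigenspaces under any surjection and primitivity passes to quotients along generator-preserving maps; as $a$ is \emph{not} primitive (its $1$-eigenspace is everything), the corresponding generator of $M/I$ cannot be primitive either. Hence $A=M/I$ is not a $\Phi$-axial algebra: you have built the universal object for a strictly weaker class of algebras.

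Moreover, primitivity cannot be repaired over your fixed localisation $R$ of $\ZZ[\al,\bt]$, and this is exactly where your own closing worry (``every defining relation must have coefficients in $R$ alone'') bites. Primitivity says that for \emph{every} element $w$ the projection $w^a_1$ equals $\lm^a(w)\,a$ for some scalar $\lm^a(w)$, and these scalars are genuinely new parameters of each algebra, not expressions in $\al,\bt$; in the paper they survive as the free parameters $\lm_1,\lm_1^f,\lm_2,\lm_2^f$ of the ring $R_0$ in \eqref{eq r0} and of Theorem \ref{thm-mt}. The paper's construction therefore adjoins an indeterminate $\lm^a(w)$ to the base ring for every generator $a$ and every word $w$, forming $R'=R''[\lm^{\cal A}(\cal M)]$ as in \eqref{eq-lm-M}, imposes the relations \eqref{eq-m-ideal} of the shape $\bigl(\prod_{\mu\in\Phi\smallsetminus\{1\}}(\ad(a)-\mu\id)\bigr)(w-\lm^a(w)a)=0$, and then works in a category whose objects are \emph{pairs} $(R,A)$, with quotients taken by matching ideals in the ring and the algebra simultaneously. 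Consequently the universal ring $R_U$ is a quotient of an infinitely generated polynomial ring, not a localisation of $\ZZ[\al,\bt]$, and the assertion that ``$S$ is an associative $R$-algebra'' becomes a genuine lemma (Lemma \ref{lem-struc-consts}): the structure map must send each $\lm^a(w)$ to the actual projection coefficient computed in $B$, not merely $\al\mapsto\al_B$, $\bt\mapsto\bt_B$. Your base-change and generator-repetition steps are fine as far as they go; the missing idea is that the ring itself must carry the primitivity data.
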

        In particular, the classification of quotients of the universal
        $m$-generated $\Phi$-axial algebra
        classifies all the $m$-generated $\Phi$-axial algebras;
        we use this for $m=2$.
        We call a $2$-generated $\Phi$-axial algebra
        a {\em $\Phi$-dihedral algebra} for short.

        In the subsequent Section \ref{sec-mt},
        we compute (with some restrictions on parameters)
        the structure constants for the universal $2$-generated Ising axial algebra.
        Our calculations are more general than,
        but very much inspired by,
        Sakuma's \cite{sakuma};
        we rely on \cite{gap} to accurately complete the large but simple polynomial calculations.
        We do not find all relations among the coefficients,
        but Theorem \ref{thm-mt}
        contains the multiplication on a spanning set of size $8$
        over a finitely-generated polynomial ring.
        Section \ref{sec-fus-rule} introduces the so-called {\em (axial) cover} of a quotient algebra,
        which is its maximal axial generalisation.
        Namely, if $A$ is a simple $2$-generated axial algebra,
        and $U$ is the appropriate universal $2$-generated axial algebra,
        then $A$ is a quotient of $U$ by a maximal ideal $B$;
        the cover of $A$ is the quotient of $U$ by the largest irreducible ideal containing $B$.
        We also record the eigenvectors and a relation from the fusion rules which are key to finding the covers of the Norton-Sakuma algebras.

        In Sections \ref{sec-5A} to \ref{sec-3A},
        under the mild additional assumption of a symmetry between the generators,
        we find covers of the five Norton-Sakuma algebras over $\QQ$
        in which all the permissible eigenvalues $1,0,\sfrac{1}{4},\sfrac{1}{32}$ are realised
        (namely $\sh{3A},\sh{4A},\sh{4B},\sh{5A},\sh{6A}$);
        the covers are dihedral Ising-axial algebras
        which can be specialised to a Norton-Sakuma algebra,
        but it turns out they can also be specialised in infinitely many other ways.
        The degenerate cases $\sh{1A},\sh{2B}$ are their own covers,
        and $\sh{2A},\sh{3C}$ are covered by $\sh{3C_\al}$
        (see also Section \ref{sec-prep}).
        We establish:
        \begin{theorem}[Corollary to the results of Sections \ref{sec-5A} to \ref{sec-3A}]
                \label{thm-covers}
                The (maximal) covers of the Norton-Sakuma algebras $\sh{nX}$ for $n\geq4$
                are given by Table \ref{tbl-covers},
                together with a weak cover of $\sh{3A}$.
                The algebras are Frobenius and satisfy a global $6$-transposition property.
        \end{theorem}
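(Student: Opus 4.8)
The plan is to assemble the statement from the case-by-case analyses of Sections~\ref{sec-5A}--\ref{sec-3A}, establishing three things for each algebra: that the corresponding row of Table~\ref{tbl-covers} really is its (maximal) cover, that the cover admits an associating form, and that its two generating axes induce a $6$-transposition pair. I would start from the universal $2$-generated Ising axial algebra $U$ of Theorem~\ref{thm-mt}, whose product is recorded on a spanning set of size~$8$ over a finitely generated polynomial ring. Each Norton-Sakuma algebra $\sh{nX}$ arises as a simple, hence maximal, quotient of $U$ at the point $(\al,\bt)=(\sfrac14,\sfrac1{32})$. Following the cover construction of Section~\ref{sec-fus-rule}, I would then locate the irreducible component of the solution set $\{(\al,\bt)\mid A_{\al,\bt}\text{ exists}\}$ passing through this point and read off the generic algebra over it. Concretely this means imposing on the structure constants of $U$ exactly the relations forced by the fusion rules $\Phi(\al,\bt)$ and by the symmetry between the generators assumed in these sections, factoring the resulting polynomial ideal into its irreducible components, and keeping the one through $(\sfrac14,\sfrac1{32})$---the large but elementary eliminations being delegated to \cite{gap}. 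For $n\geq4$ this component is the curve plotted in Figure~\ref{fig-plot}, and its generic fibre is the cover of $\sh{nX}$.

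The case $n=3$ would be treated separately, because the fusion-rule relations do not cut the solution set down to a single lower-dimensional irreducible piece through $(\sfrac14,\sfrac1{32})$: the $\sh{3A}$ algebra generalises over the full two-dimensional region $\sh{3A'_{\al,\bt}}$ of Figure~\ref{fig-plot} rather than along a curve. I would therefore only be able to exhibit a \emph{weak} cover---an irreducible axial algebra specialising to $\sh{3A}$ that is not forced to be maximal among all generalisations---and would make the distinction precise using the cover terminology of Section~\ref{sec-fus-rule}, recording it alongside the genuine covers.

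For the Frobenius property I would use the lemmas of Section~\ref{sec-prep}: an associating symmetric bilinear form on a $2$-generated axial algebra is determined by its Gram values on the generating axes, and eigenvectors of an axis for distinct eigenvalues are automatically orthogonal under any such form (as $\al(x,y)=(ax,y)=(x,ay)=\al'(x,y)$ forces $(x,y)=0$ when $\al\neq\al'$). The plan is to write the Gram matrix of the putative form on the size-$8$ spanning set as a matrix over the coordinate ring of the relevant component, normalise it on the two axes, and verify $(xy,z)=(x,yz)$ on all triples of basis vectors. Because the cover is defined generically over its irreducible variety, this single verification establishes the form for every specialisation at once, the Norton-Sakuma point included.

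The $6$-transposition property I would deduce from the $\ZZ/2$-grading of $\Phi(\al,\bt)$, under which a Miyamoto involution $\tau$ acts as $+1$ on the $1,0,\al$-eigenspaces and $-1$ on the $\bt$-eigenspace of its axis. The dihedral algebra is spanned by the $\la\tau_0,\tau_1\ra$-orbit of the two axes $a_0,a_1$, so I would compute the action of $\tau_0$ and $\tau_1$ on the spanning set directly from the eigenvector data of Section~\ref{sec-fus-rule} and check that $(\tau_0\tau_1)^n$ fixes every axis for some $n\leq6$, \emph{uniformly} over the parameter variety; this uniformity is what ``global'' means here. I expect the genuine obstacle to lie entirely in the cover computations---isolating the correct irreducible component for each $\sh{nX}$, and diagnosing precisely why $\sh{3A}$ fails to produce more than a weak cover---since once the explicit structure constants are in hand, the Frobenius and transposition statements reduce to finite, if lengthy, checks.
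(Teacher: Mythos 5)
Your outline for $n\geq4$ does follow the paper's actual route: work from the size-$8$ multiplication table of Theorem \ref{thm-mt} (note, though, that this algebra $A_{R_0}$ is explicitly only an \emph{approximation} to the universal algebra $U$, which the paper never determines), extract the relations forced by the fusion rules --- concretely the factored coefficient \eqref{eq-mother} of $a_{-1}$ in $a_0(z_2z_2)$ --- keep only factors compatible with specialisation at $(\sfrac{1}{4},\sfrac{1}{32})$ (the primality/coprimality argument of Section \ref{sec-fus-rule}), and delegate the fusion-rule and Frobenius verifications to \cite{gap}. One caveat even here: the covers are not determined by the ring ideal alone; they also need quotients on the algebra side (the subspace $Q$ killed in $\sh{5A_\al}$, the subspace $K$ of Lemma \ref{lem-4B-kernel} for $\sh{4B_\al}$, and $a_2-a_{-2}$ for $\sh{4A_\bt}$), which your purely variety-theoretic picture of ``the irreducible component through $(\sfrac{1}{4},\sfrac{1}{32})$'' does not capture.

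Two parts of the statement are genuinely not reached by your argument. First, the weak cover of $\sh{3A}$. Your diagnosis --- the solution set is two-dimensional, therefore the algebra ``is not forced to be maximal'' --- is a non sequitur and, worse, gives you no equations to solve. The actual obstruction is that \eqref{eq-mother} is \emph{not zero} in $\sh{3A}$: none of its three factors vanish there, so the vanishing forced by the fusion rule $0\star0=\{0\}$ cannot be absorbed into the ring and instead becomes a linear dependence among the spanning vectors, i.e.\ a relation in the algebra ideal. The paper can only proceed by imposing the extra hypothesis $a_3=a_0$ by hand (whence $a_{-2}=a_1$, $a_{-1}=a_2$, $s_2=s_2^f$, and Lemma \ref{lem-odd-quots} applies); it is precisely because this hypothesis is not derived from the fusion rules that $\sh{3A'_{\al,\bt}}$ is only a \emph{weak} cover, the computed ideals containing and only conjecturally equalling the true cover ideals. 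Second, the global $6$-transposition property. ``Global'' does not mean uniform over the parameter variety; it means the bound on $\size{\tau(a_0)\tau(a_1)}$ holds as automorphisms of \emph{any larger algebra} $A$ containing the dihedral subalgebra. That is exactly Lemma \ref{lem-global-order}: once the orbits $a_0^T\cup a_1^T$ are shown finite of size $n\leq6$ inside the cover, $\rho^n=1$ in $\Aut(A)$ for every ambient $A$. Your proposed check --- that $(\tau_0\tau_1)^n$ fixes every axis --- only bounds the order of $\rho$ on the cover itself and says nothing about an ambient algebra; this is why Lemmas \ref{lem-5a-miyam}, \ref{lem-6a-miyam}, \ref{lem-4X-miyam} and \ref{lem-3a-miyam} all route through Lemma \ref{lem-global-order} rather than through a matrix computation alone.
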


        \begin{table}[h]
        \begin{center}
        \renewcommand{\arraystretch}{1.5}\begin{tabular}{|r||c|c|l|l|c|}
                \hline Alg. & $\size{\rho}$ & dim. & Parameters & Quotients & page\\
                \hline\hline
                $\sh{3A'_{\al,\bt}}$ & $3$ & $4$ &
                        $\al\neq\frac{1}{2}, \quad
                        \lm = \frac{3\al^2+\al(3\bt-1)-2\bt}{4(2\al-1)}$
                        & Lemma \ref{lem-3A-form}
                        & \pageref{tbl-3A} \\
                \hline
                $\sh{4A_\bt}$ & $4$ & $5$ &
                        $\al=\frac{1}{4}, \quad
                        \bt \neq \frac{1}{8},\frac{1}{16}, \quad
                        \lm_1 = \bt, \quad
                        \lm_2 = 0$
                        & $\bt = \frac{1}{2}$
                        & \pageref{tbl-4A} \\
                \hline
                $\sh{4B_\al}$ & $4$ & $5$ &
                        $\al\not\in\eqref{eq-4B-no-kernel}, \quad
                        \bt = \frac{\al^2}{2}, \quad
                        \lm_1 = \frac{\al^2}{4}, \quad
                        \lm_2 = \frac{\al}{2}$ \quad
                        & $\al = -1$
                        & \pageref{tbl-4B} \\
                \hline
                $\sh{5A_\al}$ & $5$ & $6$ &
                        $\bt = \frac{1}{8}(5\al-1), \quad
                        \lm = \frac{3(5\al-1)}{32}$
                        & $\al = -\frac{1}{3},\frac{7}{3}$
                        & \pageref{tbl-5A} \\
                \hline
                $\sh{6A_\al}$ & $6$ & $8$ &
                \!\!\!\specialcell[1.2]{l}{
                        $\al\neq-4\pm2\sqrt5,\frac{1}{3},\frac{2}{5},\frac{1}{2}, \quad
                        \bt = \frac{\al^2}{4(1-2\al)}$ \\
                        $\lm_1=\frac{-\al^2(3\al-2)}{16(2\al-1)^2}, \quad
                        \lm_2=\frac{\al(21\al^2-18\al+4)}{16(2\al-1)^2}$
                }
                        & $\al = \frac{3}{2},\frac{4}{7},\frac{1}{24}(1\pm\sqrt{97})$
                        & \pageref{tbl-6A} \\
                \hline
        \end{tabular}
        \caption{The covers of the Norton-Sakuma algebras}
                \label{tbl-covers}
        \end{center}
        \end{table}

        We further establish the existence of an associating bilinear form on the covers.
        We use this form to determine the $\al,\bt$ for which our algebras have nontrivial quotients over $\RR$.
        The order of the product $\rho$ of the Miyamoto involutions of the two generators is also established.

        I would like to thank S. Shpectorov and C. Hoffman
        for invaluable discussions,
        and the referee for his extensive improvements.

\section{Axial theory}
        \label{sec-prep}

        \begin{definition}
                {\em Fusion rules} are a map $\star\colon\Phi\times\Phi\to2^\Phi$
                on a finite collection $\Phi$ of {\em eigenvalues}.
        \end{definition}
        We often implicitly pair $\star$ with $\Phi$.
        We primarily consider the {\em Ising fusion rules} $\Phi(\al,\bt) = \{1,0,\al,\bt\}$,
        a set of size $4$ with a symmetric map $\star$ given by Table \ref{tbl-ising}.

        Suppose that $R$ is a ring.
        For us, rings are always commutative and associative with $1$.
        An $R$-algebra $A$ is an $R$-module together with a
        commutative but not necessarily associative multiplication.
        An element $a\in A$ is {\em semisimple}
        if $\ad(a)\in\End(A)$ is diagonalisable,
        where $\ad(a)$ is the (left-)adjoint map $x\mapsto ax$;
        equivalently,
        $A$ has a direct sum decomposition into eigenspaces for $\ad(a)$.
        We introduce a special notation for eigenspaces.
        \begin{align}
                A^a_\al & = \{x\in A\mid ax = \al x\}, \\
                A^a_{\{\al_1,\al_2,\dotsc,\al_n\}} & = A^a_{\al_1}\oplus A^a_{\al_2}\oplus\dotsm\oplus A^a_{\al_n}.
        \end{align}
        We likewise write $x^a_\al$
        for the projection of $x\in A$ onto $A^a_\al$,
        and even $x^a_{\{\al_1,\dotsc,\al_n\}} = x^a_{\al_1}+\dotsm+x^a_{\al_n}$.
        We may omit the superscript $a$ if $a$ is understood from context.

        For the most part we will consider rings $R$ which are a field $\FF$,
        a polynomial ring over $\FF$,
        or a quotient of a polynomial ring of $\ZZ$,
        and our algebras are free modules over $R$.
        For this reason we almost always work in the following special situation.
        (We are not aware of similar definitions in the literature.)

        \begin{definition}
                A ring $R$ is {\em everywhere faithful} for its module $M$
                \index{everywhere faithful}
                if, for any nonzero $m\in M$,
                the annihilator ideal $\{ r\in R\mid rm = 0 \}$ of its action
                is always trivial.
        \end{definition}
        
        Such a ring, if $M$ is nonzero, is a domain,
        and contains exactly two idempotents: $0$ and $1$.

        Recall that $a\in A$ is an idempotent if $aa = a$.
        An idempotent $a$ is {\em primitive} if it spans its $1$-eigenspace,
        that is, if $A^a_1 = \la a\ra$.
        In that case, when $R$ is everywhere faithful,
        there exists a map $\lm^a\colon A\to R$ such that $x^a_1 = \lm^a(x)a$.
        This $\lm$ is well-defined,
        since $a$ spans its $1$-eigenspace $A^a_1\ni x^a_1$
        and the $R$-annihilator of $a$ is $0$,
        so $\lm^a(x)\in R$ must be unique to satisfy $x^a_1 = \lm^a(x)a$.

        \begin{definition}
                \label{def axis}
                $a\in A$ is a {\em $\Phi$-axis}
                if $a$ is a semisimple primitive idempotent
                such that
                \begin{equation}
                        A = A^a_\Phi = \bigoplus_{\al\in\Phi}A^a_\al\quad\text{ and }\quad
                        A^a_\al A^a_\bt \subseteq A^a_{\al\star\bt} = \bigoplus_{\mathclap{\gm\in\al\star\bt}}A^a_\gm.
                \end{equation}
        \end{definition}

        A {\em $\Phi$-axial algebra} $A$
        is a commutative nonassociative algebra
        generated by $\Phi$-axes.

        The fusion rules $\Phi$ are {\em $\ZZ/2$-graded}
        if there exists a partition $\Phi = \Phi_+\cup\Phi_-$
        such that, for $\ep,\ep'\in\{+,-\}$ and $\al\in\Phi_\ep,\bt\in\Phi_{\ep'}$,
        $\al\star\bt\subseteq\Phi_{\ep\ep'}$.
        If $\Phi$ is $\ZZ/2$-graded,
        there exists an automorphism $\tau(a)\in\Aut(A)$ for every $\Phi$-axis $a$
        acting as identity on $A^a_{\Phi_+}$ and inverting $A^a_{\Phi_-}$,
        called a {\em Miyamoto involution}.
        On $\Phi(\al,\bt)$ we have a $\ZZ/2$-grading into $\{1,0,\al\}\cup\{\bt\}$.

        \begin{lemma}
                \label{lem-auts-preserve-axes}
                Suppose that $t\in\Aut(A)$
                and that $a\in A$ is a $\Phi$-axis.
                Then $a^t$ is again a $\Phi$-axis.

                If $x\in A$ then $\lm^a(x) = \lm^{a^t}(x^t)$.
                If $\Phi$ is $\ZZ/2$-graded, then $\tau(a)^t = \tau(a^t)$.
        \end{lemma}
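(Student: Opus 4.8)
The plan is to reduce all three claims to a single structural observation: since $t$ is an $R$-algebra automorphism, it is in particular an $R$-module isomorphism satisfying $(xy)^t=x^ty^t$, and so it carries the eigenspace data of $a$ over to that of $a^t$. First I would show that for each $\al\in\Phi$ the automorphism $t$ restricts to an $R$-isomorphism $A^a_\al\to A^{a^t}_\al$. Indeed, if $ax=\al x$ then applying $t$ gives $a^tx^t=\al x^t$, so $t(A^a_\al)\subseteq A^{a^t}_\al$; running the same argument with $t^{-1}$ gives the reverse inclusion, whence $t(A^a_\al)=A^{a^t}_\al$. This single fact does essentially all of the work.

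From here the first claim is immediate. The element $a^t$ is idempotent, since $(a^t)^2=(aa)^t=a^t$, and it is nonzero because $t$ is injective. Applying $t$ to $A=\bigoplus_{\al\in\Phi}A^a_\al$ and using the correspondence above yields $A=\bigoplus_{\al\in\Phi}A^{a^t}_\al$, so $a^t$ is semisimple with all eigenvalues in $\Phi$. Primitivity follows from $A^{a^t}_1=t(A^a_1)=t(\la a\ra)=\la a^t\ra$, and the fusion rules transfer via $A^{a^t}_\al A^{a^t}_\bt=t(A^a_\al)\,t(A^a_\bt)=t(A^a_\al A^a_\bt)\subseteq t(A^a_{\al\star\bt})=A^{a^t}_{\al\star\bt}$, the middle equality being $(xy)^t=x^ty^t$. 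Hence $a^t$ is a $\Phi$-axis.

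For the identity on $\lm$ I would invoke uniqueness of the eigenspace decomposition. Writing $x=\sum_{\al}x^a_\al$ and applying $t$ gives $x^t=\sum_\al t(x^a_\al)$ with each $t(x^a_\al)\in A^{a^t}_\al$; by uniqueness this is precisely the $a^t$-decomposition of $x^t$, so $(x^t)^{a^t}_\al=t(x^a_\al)$ for all $\al$, and in particular $(x^t)^{a^t}_1=t(x^a_1)=t(\lm^a(x)\,a)=\lm^a(x)\,a^t$. Comparing with the defining relation $(x^t)^{a^t}_1=\lm^{a^t}(x^t)\,a^t$ and using that $R$ is everywhere faithful — so that $r\,a^t=0$ forces $r=0$ for the nonzero element $a^t$ — I may cancel $a^t$ to obtain $\lm^a(x)=\lm^{a^t}(x^t)$.

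The final claim is the only place requiring care, and the care concerns the action convention rather than any genuine difficulty. The graded pieces satisfy $t(A^a_{\Phi_\pm})=A^{a^t}_{\Phi_\pm}$, so I would simply verify that the conjugate $\tau(a)^t$ fixes $A^{a^t}_{\Phi_+}$ pointwise and negates $A^{a^t}_{\Phi_-}$: for $y=t(x)$ with $x\in A^a_{\Phi_\pm}$ one has $\tau(a)^t(y)=t(\tau(a)(x))=t(\pm x)=\pm y$. Since a Miyamoto involution is determined uniquely by acting as the identity on the $+$ part and as $-1$ on the $-$ part of the corresponding axis, this forces $\tau(a)^t=\tau(a^t)$. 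I expect no real obstacle in the lemma; the only point to pin down is matching the convention $a^t=t(a)$ with the conjugation $\tau(a)^t=t\circ\tau(a)\circ t^{-1}$, after which the verification on each graded summand is a single line.
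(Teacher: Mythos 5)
Your proof is correct and follows essentially the same route as the paper: transport each eigenspace via $(A^a_\mu)^t = A^{a^t}_\mu$ (using $t^{-1}$ for the reverse inclusion), transport the fusion rules through $(xy)^t = x^ty^t$, read off $\lm^{a^t}(x^t)a^t = \lm^a(x)a^t$ from the $1$-eigenspace, and identify $\tau(a)^t$ with $\tau(a^t)$ by their common action on the graded pieces. Your explicit appeals to everywhere-faithfulness for the cancellation of $a^t$ and to uniqueness of the Miyamoto involution are points the paper leaves implicit, but the argument is the same.
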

        \begin{proof}
                $a^t a^t = (aa)^t = a^t$
                is a nonzero idempotent, and,
                since $t$ is linear and $\ad(a)$ affords a semisimple decomposition,
                $\ad(a^t)=\ad(a)^t$ again affords a semisimple decomposition.
                In particular, if $x\in A^a_{\mu}$,
                then $a^t x^t = (ax)^t = \mu x^t$,
                so $x^t\in A^{a^t}_{\mu}$
                and therefore $(A^a_{\mu})^t \subseteq A^{a^t}_{\mu}$.
                Similarly, if $x\in A^{a^t}_{\mu}$
                then $a^t x = \mu x$ so
                $\mu x^{-t} = (a^t x)^{-t} = ax^{-t}$,
                whence $(A^{a^t}_{\mu})^{-t} \subseteq A^a_{\mu}$,
                \ie $A^{a^t}_{\mu} \subseteq (A^a_{\mu})^t$,
                showing that $A^{a^t}_{\mu} = (A^a_{\mu})^t$
                and furthermore the eigenvalues of $a^t$
                are precisely the eigenvalues of $a$ and lie in $\Phi$.
                Moreover, the fusion rules are also transported:
                suppose that $x\in A^a_{\lm}$ and $y\in A^a_{\mu}$.
                Then $x^t y^t = (xy)^t$
                is in $(A^a_{\lm\star\mu})^t = A^{a^t}_{\lm\star\mu}$, so
                $A^{a^t}_{\mu}A^{a^t}_{\lm}\subseteq A^{a^t}_{\lm\star\mu}$.
                Therefore $a^t$ is a $\Phi$-axis.

                We see that $(A^a_1)^t = A^{a^t}_1$.
                Therefore $\lm^{a^t}(x^t) a^t = (x^t)^{a^t}_1 = (x^a_1)^t = (\lm^a(x) a)^t = \lm^a(x) a^t$.
                If $\Phi$ is $\ZZ/2$-graded, then $t$
                interchanges the $-1$-eigenspaces of $\tau(a)$ and $\tau(a^t)$.
        \end{proof}

        If $a_0,a_1$ are $\Phi$-axes and $\Phi$ is $\ZZ/2$-graded,
        we write $T = \la\tau(a_0),\tau(a_1)\ra$ and $\rho = \tau(a_0)\tau(a_1)$.
        Then $T$ is a dihedral group and we set
        \begin{equation}
                a_{2i} = {a_0}^{\rho^i},\quad
                a_{2i+1} = {a_1}^{\rho^i}.
        \end{equation}

        The following important result also implies Lemma 4.1 in \cite{sakuma}:
        \begin{lemma}
                \label{lem-global-order}
                Suppose that $\Phi$ are $\ZZ/2$-graded fusion rules,
                and $a_0,a_1$ are $\Phi$-axes generating a subalgebra $B$ of $A$.
                Then $\size{a_0^T}=\size{a_1^T}$
                and $\rho^{\size{a_0^T\cup a_1^T}} = 1$ as an automorphism of $A$.
        \end{lemma}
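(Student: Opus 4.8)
The plan is to push everything into the dihedral group $T$ and its action on the indexed axes, thereby reducing both assertions to a single equality of orbit sizes. First I would record the bookkeeping that follows formally from Lemma \ref{lem-auts-preserve-axes}: since each $\tau(a_i)\in\Aut(A)$ carries axes to axes and $\tau(a^t)=\tau(a)^t$, one computes $\tau(a_i)=\tau(a_0)\rho^i$, $a_i^\rho=a_{i+2}$, and $a_i^{\tau(a_j)}=a_{2j-i}$. Writing $m$ for the order of $\rho$ in $\Aut(A)$, the reflections satisfy $\tau(a_i)=\tau(a_j)$ exactly when $m\mid(i-j)$; since $a_i=a_j$ forces $\tau(a_i)=\tau(a_j)$, the sequence $(a_i)$ is periodic with period a multiple of $m$ dividing $2m$. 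Because $\tau(a_0)$ fixes $a_0$, the orbit $a_0^T$ is the single $\la\rho\ra$-orbit of even-indexed axes and $a_1^T$ that of the odd-indexed ones.

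Next I would pin down the orbit sizes. Put $k_0=\size{a_0^T}$, the least $k>0$ with $a_{2k}=a_0$; then $k_0\mid m$, and $a_{2k_0}=a_0$ gives $\tau(a_{2k_0})=\tau(a_0)$, i.e. $\rho^{2k_0}=1$, so $m\mid 2k_0$. Hence $k_0\in\{m,\tfrac m2\}$ when $m$ is even and $k_0=m$ when $m$ is odd, and likewise for $k_1=\size{a_1^T}$. When $m$ is even an even-indexed axis can never equal an odd-indexed one, since that would force $\tau(a_0)\rho^{\text{odd}}=\tau(a_0)$ with an odd exponent; so $a_0^T\cup a_1^T$ is a disjoint union of size $N=k_0+k_1$, while for $m$ odd the two orbits either coincide or are disjoint. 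In every case both claims follow \emph{once $k_0=k_1$}: then $\size{a_0^T}=\size{a_1^T}$ is immediate, and $N\in\{m,2m\}$ is a multiple of $m$, so $\rho^N=1$ in $\Aut(A)$ because $\rho$ has order $m$. For $m$ odd this is automatic, as $k_0=k_1=m$.

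The entire content therefore concentrates in the even case, where I must show $k_0=k_1$, equivalently that the central involution $z:=\rho^{m/2}$ fixes $a_0$ if and only if it fixes $a_1$ (note $z$ fixes $a_0$ precisely when $a_m=a_0$, i.e. $k_0=\tfrac m2$). This is the step that genuinely needs the axial structure rather than group theory, and I expect it to be the main obstacle: the index calculus above is parity-preserving, so no manipulation of the $a_i$ alone can transport a coincidence among even axes to the odd ones. My approach is to use that $z$ is central, hence commutes with $\tau(a_1)$; then $z a_1\in\operatorname{Fix}(\tau(a_1))=A^{a_1}_{\Phi_+}$, so the $(-1)$-eigenpart of $a_1$ under $z$ lies in $A^{a_1}_{\Phi_+}$, and $a_{m+1}=z a_1$ is a second axis of the subalgebra $A^{a_1}_{\Phi_+}$ sharing the Miyamoto involution of $a_1$. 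Writing $z=\tau(a_0)\tau(a_{m/2})$ exhibits the question as the rank-two reciprocity that $\tau(b)$ fixes $c$ if and only if $\tau(c)$ fixes $b$ for a pair of axes $b,c$; I would establish this from primitivity and the fusion rules (and, where available, the associating form), and then apply it to the pairs $(a_0,a_{m/2})$ and $(a_1,a_{m/2+1})$ to link the even and odd periods and conclude $k_0=k_1$. With this reciprocity in hand, the reductions of the second paragraph close both statements.
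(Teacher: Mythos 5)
Your first two paragraphs are correct and agree with the paper's own bookkeeping: from $\tau(a_i)=\tau(a_0)\rho^i$ and the fact that $a_i=a_j$ forces $\size{\rho}\mid(i-j)$, both assertions reduce to $\size{a_0^T}=\size{a_1^T}$, which is automatic when $m=\size{\rho}$ is odd (you silently assume $m<\infty$; the paper disposes of the infinite case separately, but that part is easy). The genuine gap is that the even case, which you yourself call the entire content, is never proved: it is delegated to a ``reciprocity'' principle --- $\tau(b)$ fixes $c$ iff $\tau(c)$ fixes $b$ --- which you do not establish, and which is \emph{not} a formal consequence of the axioms at the level of generality of the lemma. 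Nothing in the definition of a $\ZZ/2$-graded fusion system forbids an axis $b$ with $A^b_{\Phi_-}=0$, and then $\tau(b)=\id$ fixes every axis while $\tau(c)$ may perfectly well move $b$; one can realise this in a two-dimensional algebra once the rule $\al\star\al$ is allowed to contain $\al$, so any proof of reciprocity must invoke the specific Ising rules (in effect, control of Jordan-type axes inside an Ising algebra, i.e.\ machinery in the spirit of \cite{hrs} that is not available at this early point of the paper). Worse, even granting reciprocity, your deduction does not close: since $\tau(a_i)\tau(a_j)=\rho^{j-i}$, the central element $z=\rho^{m/2}$ factors as a product of an even-indexed and an odd-indexed reflection only when $m/2$ is odd. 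When $4\mid m$, both entries of each of your pairs $(a_0,a_{m/2})$ and $(a_1,a_{m/2+1})$ lie in a single parity class, the chain ``$z$ fixes $a_0$ iff $z$ fixes $a_{m/2}$'' never leaves $a_0^T$, and no relation between $k_0$ and $k_1$ is obtained.

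The paper, by contrast, closes the even case entirely inside the group $T$, using no axial input beyond the equivariance $\tau(a^t)=\tau(a)^t$ of Lemma \ref{lem-auts-preserve-axes}: the conjugacy classes of the two generating reflections of a dihedral group have equal size, $\size{\rho}=\size{\tau(a_0)^T\cup\tau(a_1)^T}$, and the surjection $a\mapsto\tau(a)$ gives $\size{\tau(a_i)^T}\leq\size{a_i^T}$; feeding in the minimal period $n=2m'$ of the sequence $(a_i)$, it concludes that $\tau(a_0)^T$ and $\tau(a_1)^T$ both have size $m'$ and are disjoint, and hence that $a_0^T$ and $a_1^T$ both have size $m'$. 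So the step you declare impossible by ``manipulation of the $a_i$ alone'' is exactly the step the paper carries out by counting in $T$; your proposal replaces it with an auxiliary claim that is unproven (and false for general graded fusion rules), so as it stands it is not a proof.
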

        \begin{proof}
                The conjugacy classes of two generating involutions in a dihedral group
                have equal size, so $\size{\tau(a_0)^T} = \size{\tau(a_1)^T}$.
                Furthermore $\size{\rho} = \size{\tau(a_0)^T\cup\tau(a_1)^T}$.
                As $\tau(a_i)^T = \{\tau(a)\mid a\in a_i^T\}$
                we also have $\size{\tau(a_i)^T}\leq\size{a_i^T}$.
                Let $\cal A = \{a_i\mid i\in\ZZ\} = a_0^T\cup a_1^T$
                and $n$ the smallest positive integer such that $a_0 = a_n$.
                If no such $n$ exists then $\cal A$ is infinite
                and both $a_0^T$ and $a_1^T$ are infinite
                (as $\rho$ has infinite order,
                one of $\tau(a_0)^T$ and $\tau(a_1)^T$ must be infinite,
                and the sizes of these orbits coincides,
                so both are infinite).
                Suppose instead that there is such an $n$.

                If $n$ is odd then
                $n = 2m+1$ and $a_0 = a_{2m+1} = a_1^{\tau^m}$.
                Therefore $a_0^T = a_1^T$ has size $n$ and
                \begin{equation}
                        \tau(a_1) = \tau(a_0)^{\rho^{-m}} = \rho^{m}\tau(a_0)\rho^{-m} = \tau(a_0)\rho^{-2m},
                \end{equation}
                so $\rho^{1+2m} = \rho^n = 1$.
                In the last step, we used that $\rho^{\tau(a_0)} = \rho^{-1}$.

                If $n$ is even then
                $n = 2m$ and $a_0 = a_0^{\rho^m}$.
                Therefore
                \begin{equation}
                        \tau(a_0) = \rho^{-m}\tau(a_0)\rho^{m} = \tau(a_0)\rho^{2m},
                \end{equation}
                so $\rho$ has order $2m$ as required.
                On the other hand, $\tau(a_0)^T$ has size at most $m$,
                so in fact both $\tau(a_0)^T$ and $\tau(a_1)^T$ have size $m$
                and are disjoint.
                We see that $\tau(a_0)^T$ has the same cardinality as $a_0^T$,
                so repeating the argument with $a_1$ in place of $a_0$
                shows that $\tau(a_1)^T$ and $a_1^T$ are also of equal size.
                Therefore $a_1^T$ and $a_0^T$ have size $m$ and are disjoint.
        \end{proof}

        A fusion rule $\Phi$ is {\em Seress}
        if for all $\al\in\Phi$ we have that $1\star\al \subseteq \{\al\} \supseteq 0\star\al$ (and in particular $1\star0 = \emptyset$);
        $\Phi(\al,\bt)$ is Seress.
        \begin{lemma}
                \label{lem seress}
                If $\Phi$ is Seress
                and $a\in A$ is a $\Phi$-axis,
                then for any elements $x\in A,z\in A^a_{\{1,0\}}$
                we have $a(xz) = (ax)z$.
        \end{lemma}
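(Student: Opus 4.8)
The plan is to reduce to the case of eigenvectors and then read the conclusion straight off the fusion rules. Both sides of the asserted identity are $R$-bilinear in the pair $(x,z)$, since the multiplication of $A$ is bilinear and $\ad(a)$ is linear. Because $A=\bigoplus_{\lm\in\Phi}A^a_\lm$ and $A^a_{\{1,0\}}=A^a_1\oplus A^a_0$, it therefore suffices to verify $a(xz)=(ax)z$ in the case where $x\in A^a_\lm$ for a single eigenvalue $\lm\in\Phi$ and $z\in A^a_\mu$ for a single $\mu\in\{1,0\}$; the general statement then follows by expanding $x$ and $z$ in their eigencomponents and using bilinearity.

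With $x$ and $z$ so chosen, I would first note that $ax=\lm x$, so that $(ax)z=\lm(xz)$. The crux is to locate the product $xz$. By the fusion rules, $xz\in A^a_\lm A^a_\mu\subseteq A^a_{\lm\star\mu}$. Since $\Phi$ is Seress and $\mu\in\{1,0\}$, the Seress condition $1\star\lm\subseteq\{\lm\}\supseteq0\star\lm$ gives $\mu\star\lm\subseteq\{\lm\}$, so $xz\in A^a_\lm$ (with $xz=0$ in the degenerate case $\mu\star\lm=\emptyset$, which is consistent). Consequently $a(xz)=\lm(xz)$ as well, and comparing with the previous paragraph yields $a(xz)=\lm(xz)=(ax)z$, as required.

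There is no genuine obstacle here; the only point to watch is the symmetry of $\star$. Because the Ising fusion rules are tabulated symmetrically we have $A^a_\lm A^a_\mu\subseteq A^a_{\lm\star\mu}=A^a_{\mu\star\lm}$, and it is precisely the Seress hypothesis, applied with the privileged eigenvalue $\mu\in\{1,0\}$, that forces this product back into the single eigenspace $A^a_\lm$. In other words, the content of the lemma is exactly that multiplication by an element of $A^a_{\{1,0\}}$ preserves each $\ad(a)$-eigenspace, which is what the Seress condition encodes; the displayed associativity $a(xz)=(ax)z$ is then immediate.
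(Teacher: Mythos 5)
Your proof is correct and follows essentially the same route as the paper's: reduce by (bi)linearity to eigenvectors, observe that the Seress condition forces $xz$ back into the eigenspace $A^a_\lm$ of $x$, and conclude $a(xz)=\lm(xz)=(ax)z$. The paper's version is just terser, decomposing only $x$ and treating $z\in A^a_{\{1,0\}}$ as a whole, whereas you also split $z$ into its $1$- and $0$-components; the content is identical.
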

        \begin{proof}
                By linearity, we may suppose that $x\in A^a_\al$.
                As $x,xz\in A^a_\al$, $a(xz) = \al xz = (ax)z$.
        \end{proof}

        Finally, we say that an algebra $A$ is {\em Frobenius}
        if there exists an bilinear form $(,)$ on $A$
        which is {\em associating}: for all $x,y,z\in A$,
        $(xy,z) = (x,yz)$.
        Note that the eigenspaces of an element $a\in A$
        are $(,)$-perpendicular if the pairwise difference of their eigenvalues is invertible,
        and $(,)$ is symmetric if $A$ is generated by idempotents.

        The Jordan-type fusion rules are the restriction $\Phi(\al)$ of $\Phi(\al,\bt)$ to the set $\{1,0,\al\}$.
        From \cite{hrs} we deduce
        \begin{theorem}[follows from \cite{hrs}, Theorem 1.1]
                \label{thm-hrs}
                If $\kk$ is a field containing $\frac{1}{2}$ and $\bar\al\neq0,1$,
                and $A$ is a $2$-generated $\Phi(\bar\al)$-axial $\kk$-algebra,
                then $A$ is a scalar extension of a quotient of the algebra $\hat A$
                with basis $\{a,b,s\}$
                and multiplication from Table \ref{tbl-3C}
                over the ring
                \begin{equation}
                        R = \ZZ[\sfrac{1}{2},\al,\lm]/(2\al-1)(\al-2\lm),
                \end{equation}
                such that the images of $\al,\lm$ are $\bar\al,\lm^a(b)$ respectively.
                \qed
        \end{theorem}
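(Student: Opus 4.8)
The plan is to read the statement off the universal property of Theorem~\ref{thm-universal}: specialised to $\Phi=\Phi(\al)$ and the two-generator bound, it furnishes a ring $R_0$ and a universal $2$-generated $\Phi(\al)$-axial $R_0$-algebra $U$, through which every $2$-generated $\Phi(\bar\al)$-axial $\kk$-algebra $A$ factors. It therefore suffices to compute $R_0$ and $U$ explicitly, identify them with $R$ and $\hat A$, and check that the structure map $R_0\to\kk$ sends $\al\mapsto\bar\al$ and $\lm\mapsto\lm^a(b)$; the computation below is forced by the axioms alone, so it applies verbatim inside any such $A$. The heavy classification is already in \cite{hrs}; the work here is to repackage it over the single ring $R$.

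First I would fix axes $a,b$, set $\lm=\lm^a(b)$, and decompose $b=\lm a+b_0+b_\al$ with $b_0\in A^a_0$, $b_\al\in A^a_\al$. Then $s:=ab=\lm a+\al b_\al$, so $b_\al=\al^{-1}(s-\lm a)$, and evaluating $a\cdot(ab)$ on eigenspaces gives $as=\lm(1-\al)a+\al s$; the $a\leftrightarrow b$ symmetry (which fixes $s$) gives $bs=\lm(1-\al)b+\al s$. This closes the multiplication on $\{a,b,s\}$, so $\dim A\leq3$ and every structure constant lies in $\ZZ[\sfrac{1}{2},\al,\lm]$.

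The crux is to determine $s^2$, and with it the base ring. Writing $s^2=p(a+b)+rs$ (symmetric in $a,b$), I would apply the Seress identity of Lemma~\ref{lem seress} to $a\cdot(b\,b_0)=(ab)\,b_0$ with $b_0\in A^a_0$; matching coefficients yields $p=\al\lm(1-\al)$ and $r=2\al^2-\al+\lm(1-\al)$. Independently, the fusion rule $\al\star\al=\{1,0\}$ demands that the $\al$-eigencomponent of $b_\al^2$ vanish; rewriting $b_\al^2$ through $s^2$ and extracting that component produces the single relation
\begin{equation}
        (2\al-1)(\al-2\lm)=0.
\end{equation}
This is precisely the defining relation of $R$, so $\{a,b,s\}$ can be a free basis only over $R$, and the two irreducible components $\al=\sfrac{1}{2}$ and $\lm=\sfrac{\al}{2}$ are the Jordan-type and $3C(\al)$ loci. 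The remaining fusion products (\eg $A^a_0A^a_\al\subseteq A^a_\al$) I expect to verify routinely over $R$, confirming that $\hat A$ really is $\Phi$-axial.

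The step I expect to be most delicate is the bookkeeping of the degenerate specialisations, in which $\{a,b,s\}$ is no longer independent (for instance $s\in\la a,b\ra$): there the spanning argument does not by itself pin the relation, and one must fall back on the classification of \cite{hrs} to see that such an $A$ is still a quotient of a scalar extension of $\hat A$ with the advertised structure map. Granting this, Theorem~\ref{thm-universal} identifies $R_0$ with $R$ and $U$ with $\hat A$, makes $\kk$ an associative $R$-algebra via $\al\mapsto\bar\al$, $\lm\mapsto\lm^a(b)$, and exhibits $A$ as a quotient of $\hat A\tensor_R\kk$, as claimed.
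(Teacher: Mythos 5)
The paper offers no proof of Theorem~\ref{thm-hrs} at all: the \qed{} in the statement records that it is quoted wholesale from Theorem 1.1 of \cite{hrs}. So your derivation is necessarily a different route, and in outline it is the right one (it is essentially how \cite{hrs} itself proceeds); your structure constants and the final relation are correct. But there is a genuine gap at the step ``the $a\leftrightarrow b$ symmetry (which fixes $s$) gives $bs=\lm(1-\al)b+\al s$''. No such symmetry exists a priori: the flip interchanging the two generators is not known to be an algebra homomorphism, and this is exactly the subtlety the paper belabours in Sections~\ref{sec-mt} and~\ref{sec-fus-rule}, where $\lm_1=\lm_1^f$ has to be imposed as an explicit extra assumption. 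Done honestly, the eigenspace computation at $b$ gives $bs=\lm^b(a)(1-\al)b+\al s$ with $\lm^b(a)$ a second, independent parameter; your Seress step then returns $s^2=\al\lm^a(b)(1-\al)\,a+\al\lm^b(a)(1-\al)\,b+\bigl(2\al^2-\al+\lm^b(a)(1-\al)\bigr)s$ rather than a symmetric expression; and the $\al\star\al$ rule at $a$ yields only the mixed relation $2\lm^b(a)(1-\al)=\al\bigl(1+2\lm^a(b)-2\al\bigr)$, not $(2\al-1)(\al-2\lm)=0$. The missing idea is to impose the fusion constraints at the second axis $b$ as well: subtracting the mirrored relation gives $2\bigl(\lm^a(b)-\lm^b(a)\bigr)=0$ (this is where $\tfrac{1}{2}\in\kk$ enters), and only after concluding $\lm:=\lm^a(b)=\lm^b(a)$ does the displayed relation drop out. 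The symmetry is a conclusion of the classification, not a given; as written, you assume precisely the delicate point.

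Two further cautions on the packaging. Your $s=ab$ is not the $s$ of Table~\ref{tbl-3C} (there $ab=s+\al(a+b)$), so a change of basis is still owed before the multiplication ``matches the table''; that part is routine. Less routine is the closing identification of the universal ring with $R$ and of $U$ with $\hat A$: this claims more than Theorem~\ref{thm-universal} delivers, and the degenerate algebra $\sh{2B}$ shows it cannot hold with the prescribed structure map. Indeed $\sh{2B}$ (where $ab=0$, hence $\lm^a(b)=0$) is a $2$-generated $\Phi(\bar\al)$-axial $\kk$-algebra for every $\bar\al\neq0,1$, yet for $\bar\al\neq\tfrac{1}{2}$ there is no ring homomorphism $R\to\kk$ with $\al\mapsto\bar\al$ and $\lm\mapsto0$, since $(2\al-1)(\al-2\lm)\mapsto\bar\al(2\bar\al-1)\neq0$. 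Such quotients are reachable only through matching ideals that quotient the ring as well, so your fallback to the case analysis of \cite{hrs} for degenerate specialisations is not an optional tidy-up but indispensable --- which is presumably why the paper states Theorem~\ref{thm-hrs} as a corollary of \cite{hrs} rather than re-deriving it.
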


        \begin{table}[h]
        \begin{center}
        \renewcommand{\arraystretch}{1.5}
        \begin{tabular}{|c||c|c|c|}
                \hline
                        $\sh{3C_\al}$ & $a$ & $b$ & $s$ \\
                \hline\hline
                        $a$ & \;\;$a$\;\; & $s + \al(a+b)$ & $((1-\al)\lm-\al)a$ \\
                \hline
                        $b$ &        & $b$ & $((1-\al)\lm-\al)b$ \\
                \hline
                        $s$ &        &        & $((1-\al)\lm-\al)s$ \\
                \hline
        \end{tabular}
        \caption{The algebra $\sh{3C_\al}$}
                \label{tbl-3C}
        \end{center}
        \end{table}

        In fact, if $A$ is a $2$-generated $\Phi(\bar\al)$-axial $\kk$-algebra
        then there exist ideals $\hat J_A\subseteq \kk[\al,\al^{-1},(1-\al)^{-1},\lm]$
        and $\hat J_A\hat A\subseteq\hat I_A\subseteq\hat A\tensor_\ZZ\kk$
        whose quotient is $A$,
        and $\hat J_A$ contains either $(2\al-1)$ or $(\al-2\lm)$.
        Accordingly, set the ideal $\bar J_A$ to be $(2\al-1)$ or $(\al-2\lm)$.
        This $\bar J_A$ is prime,
        and together with an ideal $\bar I_A$, affords the {\em cover} of $A$;
        for example, $\bar J_{\sh{2A}} = \bar J_{\sh{3C}} = (\al-2\lm)$.
        See also Section \ref{sec-fus-rule}.

        The following was proven in \cite{hrs-sak},
        following the groundbreaking work in \cite{sakuma,ipss}.
        \begin{theorem}[Theorems 5.10, 8.7 \cite{hrs-sak}]
                \label{thm-hrs-sak}
                There exists an algebra $U$ over $\QQ^9$
                such that any $2$-generated $\Phi(\sfrac{1}{4},\sfrac{1}{32})$-axial Frobenius algebra over $\QQ$
                is a quotient of $U$,
                and $U$ is the direct sum of the Norton-Sakuma algebras over $\QQ$.
                \qed
        \end{theorem}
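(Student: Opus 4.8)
The plan is to reconstruct the Sakuma-style classification inside the axial framework, using the universal object of Theorem~\ref{thm-universal} specialised to $(\al,\bt)=(\sfrac{1}{4},\sfrac{1}{32})$ together with the Frobenius hypothesis, and then to read off the decomposition into the nine Norton--Sakuma algebras. First I would fix a $2$-generated $\Phi(\sfrac{1}{4},\sfrac{1}{32})$-axial Frobenius algebra $A$ over $\QQ$ with generating axes $a_0,a_1$, form the Miyamoto involutions $\tau(a_0),\tau(a_1)$ (legitimate since $\Phi(\al,\bt)$ is $\ZZ/2$-graded), and set $T=\la\tau(a_0),\tau(a_1)\ra$ and $\rho=\tau(a_0)\tau(a_1)$ as in the discussion following Lemma~\ref{lem-auts-preserve-axes}. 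By that lemma the whole orbit $\cal A=\{a_i\}=a_0^T\cup a_1^T$ consists of $\Phi$-axes, and by Lemma~\ref{lem-global-order} the order of $\rho$ equals $\size{a_0^T\cup a_1^T}$, so the entire problem is governed by the geometry of this orbit of axes.

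The central step is to bound this orbit, that is, to recover Sakuma's bound $\size{\rho}\le6$. Here I would exploit the Frobenius form: by invariance of the associating form under $T$ (a consequence of the associating property together with $\tau(a_i)\in\Aut(A)$ and finiteness of $T$) the Gram entries $(a_i,a_j)$ depend only on $\size{i-j}$, so the axes carry a single-parameter ``Gram sequence'' determined by $\lm=\lm^{a_0}(a_1)$. Using the fusion rules to project products of axes onto the eigenspaces $A^{a_0}_1,A^{a_0}_0,A^{a_0}_{\sfrac{1}{4}},A^{a_0}_{\sfrac{1}{32}}$, and using that eigenspaces for distinct eigenvalues are $(,)$-perpendicular (since the eigenvalue differences are invertible in $\QQ$), I would express $(a_0,a_k)$ recursively in $\lm$. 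The Seress property (Lemma~\ref{lem seress}) lets me associate freely whenever a $\{1,0\}$-eigenvector is involved, which is exactly what closes these recursions. Degeneracy of the resulting Gram matrix then forces the orbit to be finite with $\size{\rho}\le6$; the precise arithmetic of $\sfrac{1}{4},\sfrac{1}{32}$ is what makes the sequence collapse, and this is where I expect the main obstacle to lie, since it is essentially Sakuma's Virasoro computation re-expressed as finite linear algebra over $\QQ$.

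Granting the bound, I would next produce a finite spanning set. For each admissible order $\size{\rho}\in\{1,\dotsc,6\}$ the algebra is spanned by the finitely many orbit axes together with a small number of ``$s$''-type elements obtained as eigenspace projections of products $a_ia_j$; repeated use of the fusion table and of Lemma~\ref{lem seress} shows that every further product re-expresses in this set, bounding $\dim A$. Imposing idempotency of each $a_i$, the eigenvalue constraints, and the fusion inclusions turns the remaining freedom into a system of polynomial equations in the structure constants and in the single form-value $\lm$; over $\QQ$ this system has only finitely many solutions. Cataloguing these solutions---with the branch splittings producing the pairs $\sh{4A}/\sh{4B}$ and $\sh{3A}/\sh{3C}$---yields exactly the nine types $\sh{1A},\sh{2A},\sh{3A},\sh{4A},\sh{5A},\sh{6A},\sh{4B},\sh{2B},\sh{3C}$.

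Finally I would assemble $U$. Taking $U$ to be the direct sum of the nine Norton--Sakuma algebras realises it as an algebra over $\QQ^9$ (one $\QQ$-factor per type), with a pair of generating axes given diagonally. The classification of the previous step shows that an arbitrary $A$ matches the structure constants of exactly one type, hence is the image of $U$ under the projection to the corresponding factor followed by the resulting surjection; thus $A$ is a quotient of $U$, as required. The only genuinely hard ingredient is the orbit bound of the second paragraph; the spanning argument and the polynomial case analysis are laborious but routine once that bound and the Frobenius form are in hand.
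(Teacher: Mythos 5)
You should first note that the paper does not prove this statement at all: it is imported verbatim from \cite{hrs-sak} (Theorems 5.10 and 8.7), which is why it carries a qed symbol and no argument. So your proposal has to be measured against the proof in that reference, of which the paper's Section~\ref{sec-mt} is precisely the $(\al,\bt)$-generalisation. Your overall architecture --- Miyamoto involutions, the dihedral group $T$, a finite spanning set, a classification into the nine types, then assembling $U$ as the direct sum with projection-followed-by-surjection realising any $A$ as a quotient --- is the architecture of that proof, and your final assembly step is sound (it even tolerates $A$ being a proper quotient of one of the nine, which the statement permits).

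However, the steps carrying all the mathematical content are either missing or rest on a mechanism that fails. (i) Your route to the bound $\size{\rho}\le 6$ --- a Gram sequence in $\lm$ whose ``degeneracy forces the orbit to be finite'' --- does not work over $\QQ$: the Frobenius form is merely associating, not positive definite (contrast \cite{ipss}, which works over $\RR$ with a definite form), so a degenerate Gram matrix on $\{a_i\}$ implies no linear dependence among the $a_i$ and kills nothing. The actual argument runs in the opposite order: one first obtains a spanning set of size $8$ purely from the fusion rules, by resurrection-type identities expressing $a_3$ and all remaining products inside $B=\{a_{-2},\dotsc,a_2,s,s_2,s_2^f\}$ --- exactly what Lemmas~\ref{lem-c-c}--\ref{lem-ult-calc}, in particular Lemma~\ref{lem-t-stab}, accomplish in the general $(\al,\bt)$ setting without any form --- and only afterwards derives polynomial relations in $\lm_1,\lm_2$ whose finitely many roots single out the nine algebras, with the order of $\rho$ read off via Lemma~\ref{lem-global-order}. (ii) Your claim that $T$-invariance of the form makes $(a_i,a_j)$ depend only on $\size{i-j}$ is not automatic: invariance under Miyamoto involutions does hold (eigenspaces are perpendicular and $\tau(a)$ acts by $\pm1$ on them), but that gives constancy only on $T$-orbits of pairs, and $a_0^T$, $a_1^T$ are distinct orbits when $\size{\rho}$ is even; identifying $(a_0,a_0)$ with $(a_1,a_1)$, equivalently $\lm_1=\lm_1^f$, is an extra symmetry which the paper itself is careful to flag as an assumption in Section~\ref{sec-fus-rule}. (iii) The assertion that ``over $\QQ$ this system has only finitely many solutions,'' together with the catalogue producing exactly $\sh{1A},\sh{2A},\sh{2B},\sh{3A},\sh{3C},\sh{4A},\sh{4B},\sh{5A},\sh{6A}$, is stated rather than argued --- and that assertion \emph{is} Sakuma's theorem. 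As written, your text is a correct plan of attack, not a proof.
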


 \section{The universal algebra}
         \label{sec-universe}
 
         In this section, we make the formal construction
         of a certain universal algebra:
         an algebra of which all $\Phi$-axial algebras,
         or in our case of interest all $\Phi(\al,\bt)$-dihedral algebras,
         are quotients.
         \index{axial algebra!universal}
         We proceed by constructing a chain of increasingly specialised universal objects,
         starting from a free magma.
 
         Let $\{\al_3,\dotsc,\al_n\}$ be a collection of symbols,
         $\al_1 = 1,\al_2 = 0$,
         and set $\Phi = \{1,0\}\cup\{\al_3,\dotsc,\al_n\}$.
         For $R$ a ring, recall that $R[x,x^{-1}]$ means $R[x,y]/(xy-1)$.
         Let
         \begin{equation}
                 \label{eq-r''}
                 R'' = R''(\Phi) = \ZZ[\al_i,(\al_i-\al_j)^{-1}\mid \al_i,\al_j\in\Phi,i\neq j].
         \end{equation}
 
         Suppose that $R$ is a ring
         and $S$ is an associative $R$-algebra.
         For an $R$-algebra $A$,
         the {\em scalar extension by $S$} of $A$
         is the $S$-algebra $A\tensor_R S$
         with product $(x\tensor s)(y\tensor s') = (xy)\tensor(ss')$.
 
         Let $\cal A = \{a_1,\dotsc,a_m\}$
         be an ordered collection
         and $\cal M'$ the nonassociative magma on $\cal A$,
         that is, the collection of all bracketings of nonempty words on $\cal A$
         together with a multiplication given by juxtaposition.
         In the category of $R$-algebras with $m$ marked generators,
         where morphisms are algebra homomorphisms
         mapping the marked generators to marked generators and preserving the ordering,
         $R\cal M'$ is an initial object:
         there exists exactly one morphism from $R\cal M'$
         to any other object $A$ in the category.
         This mapping $R\cal M'\to A$
         is given by evaluating the word $w\in\cal M'$ as a word in $A$.
 
         Furthermore set $\cal M$ to be the commutative nonassociative magma
         on idempotents $\cal A$,
         that is, $\cal M$ is $\cal M'$ modulo the relations
         $a_ia_i = a_i$ for all $i$ and $uv = vu$ for all words $u,v$.
         Then $R\cal M$ is an initial object
         in the category of commutative (nonassociative) $R$-algebras
         generated by $m$ marked idempotents.
 
         Let $\lm^a(w)$ be a symbol for all $a\in\cal A$ and $w\in \cal M$,
         and set
         \begin{equation}
                 \label{eq-lm-M}
                 R' = R''[\lm^{a}(w)\mid a\in\cal A,w\in \cal M] = R''[\lm^{\cal A}(\cal M)].
         \end{equation}
 
         \begin{lemma}
                 \label{lem-struc-consts}
                 Suppose that $A$ is an everywhere faithful $R$-algebra,
                 generated by a set $\cal A$ of primitive idempotents
                 with eigenvalues $\Phi$.
                 If $R$ is an associative $R''=R''(\Phi)$-algebra,
                 then $R$ is an associative $R'$-algebra in a unique way
                 such that $w^a_1 = \lm^a(w)a$ for all $a\in\cal A,w\in\cal M$.
         \end{lemma}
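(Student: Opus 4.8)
The plan is to realise the $R'$-structure as the unique ring map determined by the polynomial generators of $R'$, once the scalars have been pinned down in $R$. First I would use the hypothesis that each $a\in\cal A$ is semisimple with all its eigenvalues lying in $\Phi$, so that $A=\bigoplus_{\al\in\Phi}A^a_\al$ and the projection onto $A^a_1$ is the polynomial $\pi^a_1=\prod_{\al\in\Phi,\,\al\neq1}(\ad(a)-\al\id)/(1-\al)$ in $\ad(a)$. The inverses $(1-\al)^{-1}$, indeed all the $(\al_i-\al_j)^{-1}$, already live in $R''$ by \eqref{eq-r''}; since $R$ is an $R''$-algebra they act on $A$, so $\pi^a_1$ is a genuine $R$-linear endomorphism. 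Evaluating on each summand $A^a_\al$, where $\ad(a)$ is multiplication by $\al$, one checks that $\pi^a_1$ acts as $1$ on $A^a_1$ and as $0$ on the remaining summands, confirming that it is the spectral projection.

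Next, since $A$ is generated by the idempotents $\cal A$, every word $w\in\cal M$ evaluates, via the canonical map $R\cal M\to A$, to an element of $A$, and $\pi^a_1(w)=w^a_1\in A^a_1$. Primitivity of $a$ gives $A^a_1=\la a\ra$, so $w^a_1$ is an $R$-multiple of $a$; and since $R$ is everywhere faithful for $A$ and $a\neq0$, there is a unique scalar $\lm^a(w)\in R$ with $w^a_1=\lm^a(w)a$. This recovers the map $\lm^a$ discussed before Definition \ref{def axis} and assigns to each symbol $\lm^a(w)$ of $R'$ a definite element of $R$.

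It then remains only to promote this assignment to an algebra structure. By \eqref{eq-lm-M} the ring $R'=R''[\lm^a(w)\mid a\in\cal A,w\in\cal M]$ is a free polynomial ring over $R''$ on the symbols $\lm^a(w)$, so an $R''$-algebra homomorphism $R'\to R$ is precisely an arbitrary choice of image in $R$ for each of these generators. Sending each symbol $\lm^a(w)$ to the scalar just constructed therefore extends uniquely to such a homomorphism and makes $R$ an associative $R'$-algebra with $w^a_1=\lm^a(w)a$ for all $a$ and $w$. Conversely, any $R'$-algebra structure extending the given $R''$-structure and satisfying this identity must send the symbol $\lm^a(w)$ to a scalar $c$ with $w^a_1=ca$; everywhere faithfulness forces $c=\lm^a(w)$, and freeness then pins down the homomorphism, giving the claimed uniqueness.

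The step I expect to take the most care is the first one: checking that $R''$ supplies exactly the inverted eigenvalue-differences needed to express $\pi^a_1$ as an $R$-linear polynomial in $\ad(a)$, so that the projection $w^a_1$ is meaningful over the possibly non-field ring $R$. After that, everything reduces to the universal property of a polynomial ring together with the uniqueness supplied by everywhere faithfulness.
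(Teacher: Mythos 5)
Your proposal is correct and follows essentially the same route as the paper: both pin down the image of each symbol $\lm^a(w)$ as the unique scalar supplied by primitivity and everywhere-faithfulness, and then extend this assignment over $R'=R''[\lm^{\cal A}(\cal M)]$ to an $R''$-algebra homomorphism into $R$ (the paper phrases this extension as a quotient of the auxiliary polynomial ring $R[\lm^{\cal A}(\cal M)]$ by the identification ideal $J$, which is just the universal property of the polynomial ring that you invoke directly). Your explicit spectral projector $\prod_{\al\neq1}(\ad(a)-\al\id)/(1-\al)$, made legitimate by the inverses present in $R''$, and your explicit uniqueness argument are details the paper leaves implicit, but they do not change the underlying argument.
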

         \begin{proof}
                 We use the multiplication in $A$
                 to identify the mapping of $\lm^a(w)\in R'$ into $R$.
                 Let $\hat R$ be the polynomial ring $R[\lm^{\cal A}(\cal M)]$ of $R$
                 extended by indeterminates $\lm^a(w)$
                 for all $a\in\cal A,w\in\cal M$ (\cf\eqref{eq-lm-M}).
                 Then there exists a unique mapping of $R'$ into $\hat R$:
                 as $R$ is an associative $R''$-algebra,
                 there exists $\tilde\psi\colon R''\to R$;
                 this extends canonically to $\hat\psi\colon R''[\lm^{\cal A}(\cal M)]\to R[\lm^{\cal A}(\cal M)]$,
                 that is, $\hat\psi\colon R'\to\hat R$,
                 by setting $\tilde\psi(\lm^a(w)) = \lm^a(w)$.
                 Let $J$ be the ideal of $\hat R$
                 generated by $\lm^a(w)-\lm^a(\tilde w)$
                 for all $a\in\cal A$ and $w\in \cal M$,
                 where $\tilde w\in A$ is the evaluation of the word $w\in \cal M$
                 and the second $\lm^a$ is the ordinary mapping $A\to R\subseteq\hat R$.
                 Then set $\psi\colon R'\to R$
                 to be the map $r\mapsto \hat\psi(r)/J$.
                 As $\tilde\psi,\hat\psi$ are ring homomorphisms,
                 so is $\psi$,
                 and $\psi$ makes $R$ an associative $R'$-algebra.
         \end{proof}
 
         Let $A'$ be the quotient of $R'\cal M$ modulo the ideal generated by
         \begin{gather}
                 \label{eq-m-ideal}
                 \Bigl(\prod_{\mathclap{\al_i\in\Phi\smallsetminus\{1\}}}(\ad(a)-\al_i \id)\Bigr)(w-\lm^a(w)a)
                 \quad\text{ for all }a\in\cal A,w\in A''.
         \end{gather}
         Then $A'$ is generated by primitive {\em diagonalisable} idempotents.
 
         \begin{lemma}
                 \label{lem-c''-initial}
                 There exists a unique morphism from $A'$
                 to any $R'$-algebra $A$
                 generated by $m$ primitive diagonalisable idempotents with eigenvalues $\Phi$.
         \end{lemma}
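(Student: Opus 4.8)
The plan is to recognise $A'$ as the initial object directly, by lifting the universal map out of the free object $R'\cal M$ and showing it descends to the quotient. Applying the stated universal property of $R\cal M$ with $R=R'$, the algebra $R'\cal M$ is initial among commutative $R'$-algebras generated by $m$ marked idempotents. So, given any $R'$-algebra $A$ generated by $m$ primitive diagonalisable idempotents $\bar a_1,\dotsc,\bar a_m$ with eigenvalues in $\Phi$, I would first invoke this to obtain the unique $R'$-algebra morphism $\phi\colon R'\cal M\to A$ with $\phi(a_i)=\bar a_i$ respecting the ordering. Writing $q\colon R'\cal M\to A'$ for the defining quotient, everything reduces to showing that $\phi$ factors through $q$; the induced map $\bar\phi\colon A'\to A$ is then the morphism we want.

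For the factoring I would fix a generator of the relation ideal, say \eqref{eq-m-ideal} for a given $a\in\cal A$ and word $w$, and set $\bar a=\phi(a)$, $\bar w=\phi(w)$. Since $\bar a$ is diagonalisable with eigenvalues in $\Phi$, the element $\bar w$ decomposes as $\bar w=\sum_{\mu\in\Phi}\bar w^{\bar a}_\mu$, and primitivity of $\bar a$ forces $\bar w^{\bar a}_1=\lm^a(w)\bar a$, where I use that the $R'$-structure of $A$ makes the symbol $\lm^a(w)\in R'$ act as the $1$-eigenspace coefficient. As $\phi$ is a homomorphism it intertwines $\ad(a)$ with $\ad(\bar a)$, so, writing $P=\prod_{\al\in\Phi\smallsetminus\{1\}}(\ad(\bar a)-\al\id)$, the generator is sent to
\begin{equation}
        P\bigl(\bar w-\lm^a(w)\bar a\bigr) = P\Bigl(\sum_{\mu\neq1}\bar w^{\bar a}_\mu\Bigr).
\end{equation}
Each summand $\bar w^{\bar a}_\mu$ with $\mu\neq1$ is killed by its matching factor $(\ad(\bar a)-\mu\id)$, so the product vanishes. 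Hence $\phi$ sends every generator \eqref{eq-m-ideal} to $0$; as $\ker\phi$ is an ideal it contains the whole relation ideal, and $\phi=\bar\phi\circ q$ for a unique $R'$-algebra map $\bar\phi$.

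It remains to see that $\bar\phi$ is a morphism in the category and is unique. Since $\bar\phi\circ q=\phi$ sends the marked idempotents of $R'\cal M$ to the $\bar a_i$, the map $\bar\phi$ sends the marked idempotents of $A'$ to the $\bar a_i$, so it lies in the category. If $\bar\phi'\colon A'\to A$ is any category morphism, then $\bar\phi'\circ q$ respects the markings, hence equals $\phi$ by initiality of $R'\cal M$; since $q$ is surjective, $\bar\phi'=\bar\phi$.

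The one step I expect to be delicate, and would treat most carefully, is the identification of the formal symbol $\lm^a(w)$ with the genuine $1$-eigenspace coefficient of $\bar w$ in $A$ — this is exactly what upgrades $A$ from an $R''$-algebra to an $R'$-algebra. By primitivity $\bar w^{\bar a}_1$ lands in $\la\bar a\ra$, and the coefficient is pinned down to be the image of $\lm^a(w)$ by the uniqueness in Lemma \ref{lem-struc-consts} when $A$ is everywhere faithful, and by the definition of the $R'$-structure in general. Once this bookkeeping is settled, the remainder is the routine eigenspace annihilation displayed above.
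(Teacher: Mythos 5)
Your proof is correct and follows essentially the same route as the paper: both factor the unique evaluation morphism out of the free object $R'\cal M$ through the quotient $A'$, by checking that primitivity and diagonalisability of the generators force the relation ideal \eqref{eq-m-ideal} into the kernel. Your write-up is in fact more explicit than the paper's (which compresses the eigenspace-annihilation step and the uniqueness argument into one sentence), and you correctly flag the tacit compatibility requirement that the $R'$-structure of $A$ makes $\lm^a(w)$ act as the genuine $1$-eigenspace coefficient, exactly as supplied by Lemma \ref{lem-struc-consts}.
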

         \begin{proof}
                 As in the proof of Lemma \ref{lem-struc-consts},
                for $w\in A''$, write $\tilde w$ for the evaluation of the word $w$ in $A$,
                and $\bar w$ for the evaluation of the word $\tilde w$ in $A''$.
                Set $I_A$ to be the ideal generated by $w - \bar w$ for $w\in A''$.
                Then $I_A$ is the evaluation ideal in $A''$ induced by $A$,
                and is the unique mapping from $A''$ to $A$
                preserving the $m$ generating idempotents.
                That the $m$ generators in $A$ are primitive diagonalisable idempotents
                 with eigenvalues $\Phi$
                 implies that the ideal in~\eqref{eq-m-ideal}
                 is contained in $I_A$
                 and therefore the unique mapping from $A''$ to $A$
                 factors through a morphism from $A'$ to $A$.
         \end{proof}
 
         Let $\cal C'=\cal C'_{R''}$ be the category
         whose objects are pairs $(R,A)$
         such that $R$ is a ring and an associative $R''$-algebra,
         the generators $\cal A$ in $A$ are primitive diagonalisable idempotents
         with (ordered) eigenvalues $\bar\Phi=\{1,0,\bar\al_2,\dotsc,\bar\al_n\}$,
         and $A$ is an everywhere faithful $R$-algebra generated by $\cal A$.
         (For convenience, we always write $\cal A$ for the generating set,
         as the generators can be identified without danger of confusion.)
         Morphisms in the category from $(R_1,A_1)$ to $(R_2,A_2)$
         are pairs $(\phi,\psi)$
         such that $\phi\colon R_1\to R_2$ is a $R''$-algebra homomorphism
         and $\psi\colon A_1\to A_2$ is an algebra homomorphism
         mapping generators to generators and compatible with $\phi$,
         in the sense that $(rx)^\psi = r^\phi x^\psi$
         for all $r\in R_1,x\in A_1$.
 
         \begin{lemma}
                 \label{lem-c'-initial}
                 $(R',A')$ is an initial object in $\cal C'$.
         \end{lemma}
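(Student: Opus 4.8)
The plan is to assemble the statement from the two preceding lemmas, which neatly separate the scalar part from the algebra part: Lemma~\ref{lem-struc-consts} will supply the ring map and Lemma~\ref{lem-c''-initial} the algebra map. Before exhibiting any morphisms I would first record that $(R',A')$ genuinely lies in $\cal C'$. The ring $R'$ is a polynomial ring over $R''$, hence a ring and an associative $R''$-algebra; and the relations~\eqref{eq-m-ideal} are exactly what force each generator $a$ to be a primitive diagonalisable idempotent with eigenvalues $\Phi$. Indeed, annihilating $w-\lm^a(w)a$ by $\prod_{\al_i\in\Phi\smallsetminus\{1\}}(\ad(a)-\al_i\id)$ places $w-\lm^a(w)a$ in the sum of the non-$1$ eigenspaces, so that $w^a_1=\lm^a(w)a$ for every word $w$; this both makes $\ad(a)$ a root of the squarefree polynomial $\prod_{\al_i\in\Phi}(X-\al_i)$ (diagonalisability, since the $\al_i-\al_j$ are invertible) and gives ${A'}^a_1=\la a\ra$ (primitivity). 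The one remaining membership requirement, everywhere-faithfulness of $A'$ over the domain $R'$, I return to at the end.

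Given an arbitrary object $(R,A)$ of $\cal C'$, I would construct a morphism in two stages. Since $A$ is an everywhere-faithful $R$-algebra generated by primitive idempotents with eigenvalues $\bar\Phi$ and $R$ is an associative $R''$-algebra, Lemma~\ref{lem-struc-consts} makes $R$ an associative $R'$-algebra in a unique way with $w^a_1=\lm^a(w)a$ in $A$; write $\phi\colon R'\to R$ for this structure map, an $R''$-algebra homomorphism sending the symbol $\lm^a(w)$ to the genuine structure constant determined by $\tilde w^a_1$. Viewing $A$ as an $R'$-algebra through $\phi$, it is still generated by $m$ primitive diagonalisable idempotents with eigenvalues $\bar\Phi$ (the images of $\Phi$), so Lemma~\ref{lem-c''-initial} provides a unique $R'$-algebra morphism $\psi\colon A'\to A$ preserving the marked generators. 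The pair $(\phi,\psi)$ is then a morphism of $\cal C'$: $\phi$ is an $R''$-algebra homomorphism by construction, $\psi$ fixes the generators, and the compatibility $(rx)^\psi=r^\phi x^\psi$ is precisely the $R'$-linearity of $\psi$ once $A$ carries the $R'$-structure $\phi$.

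For uniqueness, let $(\phi',\psi')\colon(R',A')\to(R,A)$ be any morphism. Because $\psi'$ fixes each generator $a$ and is multiplicative, it intertwines $\ad(a)$ on $A'$ with $\ad(a)$ on $A$, and hence commutes with projection onto the $1$-eigenspace. Applying $\psi'$ to the identity $w^a_1=\lm^a(w)a$ of $A'$ and using compatibility yields $\tilde w^a_1=\phi'(\lm^a(w))\,a$ in $A$; primitivity together with everywhere-faithfulness of $A$ then forces $\phi'(\lm^a(w))$ to equal the unique structure constant, \ie $\phi(\lm^a(w))$. As $\phi'$ and $\phi$ both restrict to the given $R''$-algebra structure on $R$ and $R'$ is generated over $R''$ by the symbols $\lm^a(w)$, this gives $\phi'=\phi$. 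Finally $\psi'$ and $\psi$ are two $R'$-algebra morphisms $A'\to A$ for the same structure $\phi$, preserving generators, so $\psi'=\psi$ by the uniqueness clause of Lemma~\ref{lem-c''-initial}, whence $(\phi',\psi')=(\phi,\psi)$.

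I expect the crux to be the determination of $\phi$ rather than $\psi$: the formal symbol $\lm^a(w)$ must be reinterpreted as the actual structure constant of the target, and it is exactly the everywhere-faithfulness hypothesis built into $\cal C'$ (the annihilator of $a$ being trivial) that lets one read a \emph{unique} scalar off $\tilde w^a_1=c\,a$ and thereby makes $\phi$ both well-defined and forced. The only other delicate point is verifying everywhere-faithfulness of $A'$ itself, needed solely so that $(R',A')$ is an object at all; I would establish this by exhibiting $A'$ as a torsion-free $R'$-module over the domain $R'$, which is where the real work of the section lies.
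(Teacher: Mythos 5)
Your proof follows the paper's own route: Lemma~\ref{lem-struc-consts} supplies the unique $R''$-algebra map $\phi\colon R'\to R$ (turning the formal symbols $\lm^a(w)$ into the actual structure constants, which is well-defined precisely by everywhere-faithfulness of $A$), and Lemma~\ref{lem-c''-initial} supplies the unique generator-preserving algebra map $\psi\colon A'\to A$, with compatibility amounting to $R'$-linearity of $\psi$. You are in fact more thorough than the paper, which neither spells out the uniqueness of an arbitrary morphism $(\phi',\psi')$ nor addresses the object-membership question (everywhere-faithfulness of $A'$ over $R'$) that you flag but leave open -- a point the original proof silently skips as well.
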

         \begin{proof}
                 That is, for any $(R,A)$ in $\cal C'$
                 there exists a single morphism $(\phi,\psi)\colon(R',A')\to(R,A)$.
                 This follows since $R$ is an $R'$-algebra,
                 by Lemma~\ref{lem-struc-consts},
                 by a unique nontrivial map $\phi\colon R'\to R$
                 which maps $R'\mapsto (R'\tensor_\ZZ R)/I_R$,
                 where $I_R$ is the ideal $(\al_i-\bar\al_i\mid 3\leq i\leq n)$
                 encoding the identification of $\bar\al_i$ with $\al_i$.
                 Furthermore $A'$ is initial among $R'$-algebras generated by $m$ marked primitive idempotents
                 acting diagonalisably with eigenvalues $\Phi$
                 by Lemma~\ref{lem-c''-initial}.
         \end{proof}
 
         Suppose that $(J,I)$ is a pair
         with $J$ an ideal of $R$ and $I$ an ideal of $A$.
         The pair $(J,I)$ {\em match} if $JA\subseteq I$
         and $\lm^a(x)\in J$ for all $x\in I,a\in\cal A$.
         If $(J,I)$ match and $I$ contains none of the generators in $\cal A$,
         then $(R/J,A/I)$ is again an element of $\cal C'$,
         since $\cal A$ is preserved,
         $R/J$ is again an associative algebra over $R''$ (and hence $R'$)
         acting everywhere faithfully on $A/I$,
         and $A/I$ is generated by $m$ primitive diagonalisable idempotents with eigenvalues $\Phi$.
 
         Take an arbitrary object $(R,A)$
         and let $(\phi,\psi)$ be the morphism from $(R',A')$ to $(R,A)$.
         Then $\psi$ is a surjection in the sense that $R{A'}^{\psi} = A$.
         We can write $\phi$ as the composition $\phi_s\circ\phi_i$
         of an injection $R'\mapsto R'\tensor_\ZZ R$
         with a surjection $(R'\tensor_\ZZ R)/J_A$,
         for $J_A = \ker\phi_s$ an ideal.
         Let $I_A = \ker\psi$.
         The ideals $(J_R,I_A)$ match
         by the condition that $\psi,\phi_s$ have to be compatible.
         Write $\cal C'(J,I)$ for the subcategory of $\cal C'$
         whose objects are algebras $(R,A)$
         such that $J_R\subseteq J,I_A\subseteq I$.
         In $\cal C'(J,I)$, the object $(R'/J,A'/I)$ is again initial.
 
         So far, $\Phi$ has only been a collection of eigenvalues.
         Now suppose that $\Phi$ comes with fusion rules $\star$.
         We write down two special ideals $J',I'$:
         let $I'$ be generated by
         \begin{equation}
                 \prod_{\mathclap{\al\in\al_j\star\al_k}}(\ad(a_i)-\al\id)\Bigl(
                 \prod_{\mathclap{\al\in\Phi\smallsetminus\{\al_j\}}}(\ad(a_i)-\al\id)x\cdot
                 \prod_{\mathclap{\al\in\Phi\smallsetminus\{\al_k\}}}(\ad(a_i)-\al\id)y\Bigr)
         \end{equation}
         for all $x,y\in A$ and $\al_j,\al_k\in\Phi$ and $a_i\in\cal A$,
         and set $J' = (\lm^a(x)\mid a\in\cal A,x\in I')$.
         Then $(J',I')$ match.
         The ideals $J',I'$ are minimal
         with respect to the axioms of $\Phi$-axial algebras,
         that is, if $(R,A)$ are $\Phi$-axial $R'$-algebras
         then they are in $C'$
         and the unique morphism $(\phi,\psi)\colon(R',A')\to(R,A)$
         necessarily has $J'\subseteq\ker\phi$ and $I'\subseteq\ker\psi$.
         Therefore $\cal C = \cal C'(J',I')$
         is the category of $m$-generated $\Phi$-axial algebras
         and has initial object $(R_U,U) = (R'/J',A'/I')$.
         We have
 
         \begin{theorem}
                 \label{thm-universe}
                 For fusion rules $\Phi=\{1,0,\al_3,\dotsc,\al_n\}$,
                 $m\in\NN$,
                 and $R''$ from~\eqref{eq-r''},
                 there exists an algebra $U$ over a ring $R_U$
                 such that if $S$ is an associative $R''$-algebra
                 and $B$ is an $m$-generated $\Phi$-axial $S$-algebra,
                 there exist matching ideals
                 $J_B\subseteq R_U \tensor_\ZZ S,I_B\subseteq U\tensor_\ZZ S$
                 such that $U\tensor_\ZZ S/I_B\cong B$
                 as an algebra over $R_U\tensor_\ZZ S/J_B\cong S$.
                 \qed
         \end{theorem}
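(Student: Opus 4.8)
The plan is to read the theorem off the initial object $(R_U,U) = (R'/J',A'/I')$ of $\cal C$ constructed above, repackaging its universal property as a statement about scalar extension over $\ZZ$. First I would fix an associative $R''$-algebra $S$ and an $m$-generated $\Phi$-axial $S$-algebra $B$, and produce the morphism $(\phi,\psi)$ from $(R_U,U)$ to $(S,B)$ coming (modulo the faithfulness caveat below) from the initiality of Lemma~\ref{lem-c'-initial} and the passage to $\cal C=\cal C'(J',I')$. Here $\phi\colon R_U\to S$ sends each $\al_i$ to its image under the $R''$-structure of $S$ and each $\lm^a(w)$ to the corresponding structure constant of $B$ supplied by Lemma~\ref{lem-struc-consts}, while $\psi\colon U\to B$ is the evaluation $\ev$ of magma words in the generating axes. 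I would then set $J_B=\ker\bigl(R_U\tensor_\ZZ S\to S,\ r\tensor s\mapsto\phi(r)s\bigr)$, which is surjective because $1\tensor s\mapsto s$, so $R_U\tensor_\ZZ S/J_B\cong S$; and $I_B=\ker\bigl(U\tensor_\ZZ S\to B,\ u\tensor s\mapsto s\cdot\psi(u)\bigr)$, surjective because the axes generate $B$ over $S$, so $U\tensor_\ZZ S/I_B\cong B$. This is exactly the factorisation $\phi=\phi_s\circ\phi_i$ through $R_U\tensor_\ZZ S$ already used for $\cal C'$.

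The second block of work is verification. I would check that $U\tensor_\ZZ S$ is genuinely an $m$-generated $\Phi$-axial algebra over $R_U\tensor_\ZZ S$: the elements $a_i\tensor 1$ remain primitive idempotents, the eigenvalue differences stay invertible after tensoring so each $\ad(a_i\tensor 1)$ is semisimple with eigenvalues $\Phi$, and the fusion rules are inherited factorwise. I would then confirm that the two surjections above are compatible with $\phi$ in the sense of a morphism of pairs, and that $(J_B,I_B)$ match---$J_B(U\tensor_\ZZ S)\subseteq I_B$ and $\lm^a(x)\in J_B$ for every $x\in I_B$ and $a\in\cal A$---which follows by the same reasoning that forced the analogous ideals to match once $\psi$ and $\phi_s$ were required to be compatible. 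With matching established, $U\tensor_\ZZ S/I_B\cong B$ as an algebra over $R_U\tensor_\ZZ S/J_B\cong S$ is precisely the asserted conclusion.

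The step I expect to be the genuine obstacle is the everywhere-faithfulness hypothesis built into $\cal C$. Lemma~\ref{lem-struc-consts}, and with it the canonical assignment of an element of $S$ to each $\lm^a(w)$, relies on $B$ being everywhere faithful to pin down the structure constant uniquely through $w^a_1=\lm^a(w)a$; a general $\Phi$-axial $S$-algebra need not be everywhere faithful, so the morphism $(\phi,\psi)$ is not simply handed to us by initiality. My intended fix is to note that primitivity of each axis still forces $w^a_1$ to lie in $\la a\ra$, so a scalar with $w^a_1=\lm^a(w)a$ exists even when it is not unique; choosing one such value for every $a$ and $w$ defines a ring homomorphism $R'\to S$, and the crux is to check that it kills $J'$. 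This holds because $B$ obeys the fusion rules that generate $I'$, so every $x\in I'$ evaluates to $0$ in $B$, whence $\ev(x)^a_1=0$ and the value of $\lm^a(x)$ may be taken to be $0$; since the theorem only asserts the existence of \emph{some} pair of matching ideals, the non-canonical nature of the choice is harmless. Making this compatibility precise---verifying that the chosen $\phi$ annihilates $J'$ and that $\phi\circ\lm^a$ agrees with extracting the $a$-coefficient in $B$---is where the argument needs the most care.
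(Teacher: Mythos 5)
Your first two paragraphs reproduce the paper's own proof. In the paper, Theorem~\ref{thm-universe} is simply the repackaging of the initiality of $(R_U,U)=(R'/J',A'/I')$ in $\cal C=\cal C'(J',I')$: the unique morphism $(\phi,\psi)$ to $(S,B)$ is factored as an injection into the scalar extensions $R_U\tensor_\ZZ S$, $U\tensor_\ZZ S$ followed by surjections onto $S$ and $B$, and $J_B,I_B$ are the kernels, with matching coming from the compatibility of $\phi$ and $\psi$. Up to that point your proposal and the paper coincide, including the verification items you list.

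The divergence is your last paragraph, and there the proposed patch does not work. The generators $\lm^a(x)$, $x\in I'$, of $J'$ are not independent indeterminates of $R'$: each is a polynomial over $R''$ in the symbols $\lm^a(w)$, $w\in\cal M$. So once you have chosen $\phi(\lm^a(w))\in S$ for every symbol (such a choice does exist by primitivity, as you say), the values $\phi(\lm^a(x))$ for $x\in I'$ are already determined; you are not free to ``take them to be $0$''. What your argument actually yields is that the evaluation $\ev_\phi\colon A'\to B$ induced by the chosen $\phi$ kills $I'$, whence $\phi(\lm^a(x))\,a = (\ev_\phi(x))^a_1 = 0$ in $B$; that is, $\phi(\lm^a(x))$ lies in the annihilator in $S$ of the axis $a$. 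Passing from this to $\phi(\lm^a(x))=0$, i.e.\ to $\phi(J')=0$, i.e.\ to the statement that $\phi$ descends to $R_U=R'/J'$, is exactly the everywhere-faithfulness hypothesis you set out to avoid; for a non-faithful $B$ the chosen $\phi$ may simply fail to kill $J'$, and nothing in your sketch rules this out. The paper never faces this issue because everywhere-faithfulness is built into the objects of $\cal C'$ (hence of $\cal C$); its assertion that $\Phi$-axial $R'$-algebras ``are in $\cal C'$'' already presupposes it, so in Theorem~\ref{thm-universe} ``$m$-generated $\Phi$-axial $S$-algebra'' must be read as ``object of $\cal C$'', with $S$ everywhere faithful for $B$. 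Under that reading your first two paragraphs are a complete proof identical to the paper's; for genuinely non-faithful $B$, neither your argument nor the paper's establishes the statement.
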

 
         We say that the $R_U$-algebra $U$
         is the {\em universal $m$-generated $\Phi$-axial algebra}.
         This implies Theorem \ref{thm-universal}:
         finite generation means that there exists an $m$ for which Theorem \ref{thm-universe} gives the desired result.

         Note that, if we replaced the underlying ring $R''$ in \eqref{eq-r''}
         with a larger ring $\hat R''$,
         Theorem~\ref{thm-universe} continues to hold with minor modification.
         In particular, set
         \begin{equation}
                 \label{eq r0}
                 R_0 = \ZZ[\sfrac{1}{2},\al,\bt,
                \al^{-1},\bt^{-1},(\al-\bt)^{-1},(\al-2\bt)^{-1},(\al-4\bt)^{-1},
                \lm_1,\lm_1^f,\lm_2,\lm_2^f].
         \end{equation}
         The upcoming Theorem~\ref{thm-mt} gives the multiplication table
         for an algebra $A_{R_0}$ over $R_0$.
         The universal $\Phi(\al,\bt)$-dihedral algebra $U$ that exists by Theorem~\ref{thm-universe}
         is a quotient of $A_{R_0}$,
        although we will not completely determine $U$ in this text.

\section{The multiplication table}
        \label{sec-mt}

        From now on,
        we only consider the Ising fusion rules $\Phi = \Phi(\al,\bt)$.
        Suppose that $R$ is an associative $R''(\Phi)$-algebra from \eqref{eq-r''},
        so that $\frac{1}{2},\al,\bt\in R$
        and $\al,\bt,\al-1,\bt-1,\al-\bt$ are invertible in $R$.
        Suppose that $A$ is a (commutative, nonassociative) $R$-algebra
        generated by $\Phi$-axes $a_0,a_1$.
        In this section, culminating in Theorem \ref{thm-mt},
        we give a spanning set and multiplication table
        for the algebra structure of $A$,
        under some further conditions on $R$.

        Multiplication in $A$ is denoted by juxtaposition.
        A second product $\circ\colon A\times A\to A$ on $A$,
        \begin{equation}
                x\circ y = xy - \bt(x+y) \text{ for } x,y\in A,
        \end{equation}
        will play a critical role for us because of the $\ZZ/2$-grading on $\Phi$.

        Recall that, for a $\Phi$-axis $a\in A$,
        $\tau(a)\in\Aut(A)$ is the automorphism
        acting as $-1$ on the $\bt$-eigenspace of $a$
        and fixing everything else,
        and our conventions from Section \ref{sec-prep}.

        Let $B$ be the subset of $A$ with elements
        \begin{gather}
                B = \{ a_{-2},a_{-1},a_0,a_1,a_2,s,s_2,s_2^f \}, \\
                \text{ for } s = a_0\circ a_1,\quad
                s_2 = a_0\circ a_2,\quad
                s_2^f = a_{-1}\circ a_1.
        \end{gather}
        In this section we compute the products between elements of $B$,
        recorded in lemmas;
        other computations will be part of the rolling text.

        Four elements of the ring will also have a special role to play.  Namely, we write
        \begin{equation}
                \lm_1 = \lm^{a_0}(a_1),\quad
                \lm_1^f = \lm^{a_1}(a_0),\quad
                \lm_2 = \lm^{a_0}(a_2),\quad
                \lm_2^f = \lm^{a_1}(a_{-1}).
        \end{equation}
        Note that, since $\al,\bt,\lm,\lm^f,\lm_2,\lm_2^f$ lie in the ring $R$,
        the automorphisms of $A$ and in particular $\tau(a_0),\tau(a_1)$ fix them.

        The superscript $f$ notation
        refers to the map $f$, called the {\em flip},
        interchanging $a_0,a_1$.
        In general $f$ is not an automorphism,
        but it turns out to be in some special cases.
        It has an action on the ring and on the algebra.

        \gap
        Recall that, by the definition of a $\Phi$-axis $a$,
        any element $x\in A$ may be written as
        \begin{equation}
                x = x^{a}_1 + x^{a}_0 + x^{a}_\al + x^{a}_\bt
                 = x_1 + x_0 + x_\al + x_\bt,
        \end{equation}
        where $x^a_\phi$ is the projection of $x$ onto the $\phi$-eigenspace $A^a_\phi$ of $a$.
        We omit the superscript $a$ when context makes it clear
        which idempotent $a$ is being used for this decomposition.
        In this section, all eigenvector decompositions
        will be with respect to $a$ or $a_0$ unless indicated otherwise.
        Furthermore, recall that $x_{1,0}$ is the projection of $x$ onto $A^a_1\oplus A^a_0$,
        and $x_+$ is the projection of $x$ onto $A^a_1\oplus A^a_0\oplus A^a_\al$.
        
        We deduce that
        \begin{align}
                \label{eq-x-al}
                x_\al & = \frac{1}{\al}({a}x - \lm^{a}(x){a} - \bt x_\bt), \\
                \label{eq-x-bt}
                x_\bt & = \frac{1}{2}(x - x^{\tau(a)}).
        \end{align}
        The latter follows since $\tau(a)$ inverts the $\bt$-eigenspace.
        So we have that $x_\bt = \frac{1}{2}(x - x^{\tau(a)})$.
        The former is found by using the previous equations in
        ${a}x = \lm^{a}(x){a} + \al x_\al + \bt x_\bt$ and rearranging.
        Furthermore
        \begin{equation}
                \label{eq aax}
                a^2x = {a}({a}x) = \lm^a(x)(1-\al){a} + \al {a}x + \bt(\bt-\al)x_\bt,
        \end{equation}
        using the substitution \eqref{eq-x-al}
        in the expression $a^2x=\lm^{a}(x)a + \al^2 x_\al + \bt^2 x_\bt$.

        It follows that $a\circ x$ lies in $A^{a}_{\{1,0,\al\}}$
        (and thus is fixed by $\tau(a)$)
        for any $x\in A$:
        \begin{align}
                \label{eq cdot al}
                a\circ x & = \lm^{a}(x)a + \al x_\al + \bt x_\bt - \bt(x+a), \\
                & = (\lm^{a}(x)-\bt)a - \bt x_0 + (\al-\bt) x_\al.
        \end{align}

        \begin{lemma}
                \label{lem-a(acb)}
                We deduce
                \begin{equation}
                        \begin{aligned}
                                a_0s & = (\al-\bt)s + (\lm(1-\al)+\bt(\al-\bt-1)){a_0} + \frac{1}{2}(\al-\bt)\bt\bigl(a_1+a_1^{\tau({a_0})}\bigr),\\
                                a_1s & = (\al-\bt)s + (\lm^f(1-\al)+\bt(\al-\bt-1)){a_1} + \frac{1}{2}(\al-\bt)\bt\bigl(a_0+a_0^{\tau({a_1})}\bigr),\\
                                a_0s_2 & = (\al-\bt)s_2 + (\lm_2(1-\al)+\bt(\al-\bt-1)){a_0} + \frac{1}{2}(\al-\bt)\bt\bigl(a_2+a_2^{\tau({a_0})}\bigr)
                        \end{aligned}
                \end{equation}
                from
                \begin{equation}
                        \label{eq a(acx)}
                        a(a\circ x) = (\al-\bt)a\circ x + (\lm(x)(1-\al)+\bt(\al-\bt-1))a + (\al-\bt)\bt x_+.
                \end{equation}
        \end{lemma}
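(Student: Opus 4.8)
The plan is to establish the general identity~\eqref{eq a(acx)} first, since the lemma explicitly \emph{deduces} the three displayed products from it; each product is then an immediate specialisation. To prove~\eqref{eq a(acx)}, I would start from the definition $a\circ x = ax-\bt(a+x)$ and use that $a$ is idempotent to write
\begin{equation*}
        a(a\circ x) = a(ax) - \bt a - \bt(ax).
\end{equation*}
The term $a(ax)=a^2x$ is exactly what~\eqref{eq aax} computes, so substituting it gives
\begin{equation*}
        a(a\circ x) = \lm^a(x)(1-\al)a + \al\,ax + \bt(\bt-\al)x_\bt - \bt a - \bt\,ax,
\end{equation*}
in which the two occurrences of $ax$ collect into $(\al-\bt)\,ax$.

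The key manoeuvre is then to trade $ax$ back for $a\circ x$ via $ax = a\circ x + \bt(a+x)$, and to split $a+x = a + x_+ + x_\bt$ using the eigenspace decomposition with respect to $a$. This produces the term $(\al-\bt)(a\circ x)$, an $a$-multiple, a multiple of $x_+$, and leftover $x_\bt$-terms. The one point needing care---and the only genuine computation---is that the $x_\bt$ contributions, namely the $(\al-\bt)\bt x_\bt$ coming from $(\al-\bt)\bt(a+x)$ and the $\bt(\bt-\al)x_\bt$ carried over from~\eqref{eq aax}, cancel identically; this cancellation is precisely what reflects that $a\circ x$ is supported on $A^a_{\{1,0,\al\}}$, consistent with~\eqref{eq cdot al}. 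Collecting the surviving $a$-terms $\lm^a(x)(1-\al)a$, $-\bt a$ and $(\al-\bt)\bt a$ gives the coefficient $\lm^a(x)(1-\al)+\bt(\al-\bt-1)$, while the $x_+$-term appears with coefficient $(\al-\bt)\bt$, which is~\eqref{eq a(acx)}.

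With~\eqref{eq a(acx)} in hand, the three products follow by choosing $a$ and $x$. For $a_0s$ I set $a=a_0$, $x=a_1$, so $a\circ x = a_0\circ a_1 = s$ and $\lm^a(x)=\lm^{a_0}(a_1)=\lm$; for $a_0s_2$ I set $a=a_0$, $x=a_2$, giving $a\circ x=s_2$ and $\lm^{a_0}(a_2)=\lm_2$; and for $a_1s$ I use that $\circ$ is commutative, so with $a=a_1$, $x=a_0$ I again get $a\circ x = a_1\circ a_0 = s$, now with $\lm^{a_1}(a_0)=\lm^f$. In each case the last step is to rewrite $x_+$: from~\eqref{eq-x-bt} we have $x_\bt=\tfrac12(x-x^{\tau(a)})$, hence
\begin{equation*}
        x_+ = x - x_\bt = \tfrac12\bigl(x + x^{\tau(a)}\bigr),
\end{equation*}
so that $(\al-\bt)\bt\,x_+ = \tfrac12(\al-\bt)\bt\bigl(x + x^{\tau(a)}\bigr)$, which is exactly the third summand in each displayed product (taking $x$ to be $a_1$, $a_0$, $a_2$ respectively). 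I expect the main obstacle to be purely bookkeeping, namely tracking the two $x_\bt$-terms through the substitution of~\eqref{eq aax} and checking they cancel; once that is done, the identification of the $a$-coefficient and the three specialisations are routine.
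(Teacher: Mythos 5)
Your proposal is correct and follows essentially the same route as the paper's proof: both start from $a(a\circ x) = a(ax) - \bt(aa+ax)$, substitute \eqref{eq aax}, trade $ax$ back for $a\circ x$ so that the $x_\bt$-terms cancel (the paper phrases this as writing $x_+ = x - x_\bt$), and then specialise $x$ to $a_1$, $a_2$ and swap the roles of $a_0,a_1$ for the three displayed products. Your explicit verification of the $x_\bt$-cancellation and the rewriting $x_+ = \tfrac12(x + x^{\tau(a)})$ via \eqref{eq-x-bt} are exactly the steps the paper leaves implicit.
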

        \begin{proof}
                The equation \eqref{eq a(acx)} can be deduced
                starting from the definition of $\circ$:
                \begin{align*}
                        a(a\circ x) & = a(ax) - \bt(aa + ax) \\
                                & = (\al-\bt)ax + (\lm(x)(1-\al)-\bt)a + \bt(\bt-\al)x_\bt
                        \intertext{by \eqref{eq aax},
                        and then rewriting $ax$ using $\circ$ as}
                                & = (\al-\bt)a\circ x + (\al-\bt)\bt(a + x) + (\lm(x)(1-\al)-\bt)a - (\al-\bt)\bt x_\bt.
                \end{align*}
                This gives the result after collecting terms
                and writing $x_+ = x - x_\bt$.
                From \eqref{eq a(acx)} we deduce $a_0s,a_0s_2$
                by substituting $a_1,a_2$ for $x$ respectively.
                Then $a_1s$ follows by swapping $a_0,a_1$.
        \end{proof}

        \gap
        We define a further commutative product
        \begin{equation}
                *_a\colon A\times A \to A,\quad
                x*_ay = (x\circ a)y + (y\circ a)x
        \end{equation}
        for our computations.
        With respect to the $\Phi$-axis $a$, for $x,y\in A$,
        \begin{align}
                \label{eq star 10}
                (x*_ay)_{1,0} & =
                        -2\bt (x_+y_+)_{1,0} + 2\al x_\al y_\al
                        + ((\lm^{a}(x)-\bt)\lm^{a}(y)+(\lm^{a}(y)-\bt)\lm^{a}(x))a, \\
                \label{eq star al}
                (x*_ay)_\al & =
                        (\al-2\bt)(x_+y_+)_\al
                        + \al(\lm^{a}(x)-\bt)y_\al + \al(\lm^{a}(y)-\bt)x_\al.
        \end{align}
        To deduce this, we use the previous calculations
        for $(x\circ a)_{1,0}$ and $(x\circ a)_\al$.
        From the fusion rules, since $(x\circ a)_\bt = 0$,
        we have that
        \begin{align*}
                (x*_ay)_{1,0}
                & = ((x\circ a)y)_{1,0} + ((y\circ a)x)_{1,0} \\
                & = (x\circ a)_{1,0}y_{1,0} + (y\circ a)_{1,0}x_{1,0}
                                + (x\circ a)_\al y_\al + (y\circ a)_\al x_\al \\
                & = -2\bt x_{1,0}y_{1,0} + 2(\al-\bt)x_\al y_\al
                        + ((\lm^{a}(x)-\bt)\lm^{a}(y)+(\lm^{a}(y)-\bt)\lm^{a}(x))a,
        \end{align*}
        which gives \eqref{eq star 10} when we use that
        $(x_+y_+)_{1,0} = x_{1,0}y_{1,0}+x_\al y_\al$.
        For \eqref{eq star al},
        \begin{align*}
                (x*_ay)_\al & = ((x\circ a)y)_\al + ((y\circ a)x)_\al \\
                        & = (x\circ a)_{1,0}y_\al + (x\circ a)_\al y_{1,0}
                                + (y\circ a)_{1,0}x_\al + (y\circ a)_\al x_{1,0} \\
                        & = (\al-2\bt)(x_{1,0}y_\al + x_\al y_{1,0})
                                + \al(\lm^{a}(x)-\bt)y_\al + \al(\lm^{a}(y)-\bt)x_\al,
        \end{align*}
        and after using $x_{1,0}y_\al + x_\al y_{1,0} = (x_+y_+)_\al$
        we have the answer.

        We also find that, for $x,y\in A$,
        when $\al-2\bt$ is invertible,
        \begin{align}
                \label{eq-++al}
                (x_+y_+)_\al & = \frac{1}{\al-2\bt}\bigl((x*_ay)_\al+\al\bt(x_\al + y_\al)-\al(\lm^{a}(x)y_\al + \lm^{a}(y)x_\al)\bigr), \\
                \label{eq-alal}
                x_\al y_\al & = \frac{1}{2\al}\bigl( (x*_ay)_{1,0} + 2\bt (x_+y_+)_{1,0}
                        + (\bt\lm^{a}(x)+\bt\lm^{a}(y) - 2\lm^{a}(x)\lm^{a}(y))a \bigr).
        \end{align}
        The expression \eqref{eq-++al} is just a rearrangement of \eqref{eq star al}.
        For \eqref{eq-alal}, rearranging from \eqref{eq star 10},
        \begin{align*}
                2\al x_\al y_\al + (2\lm^{a}(x)\lm^{a}(y)-\bt(\lm^{a}(x)+\lm^{a}(y)))a
                        &        = (x*_ay)_{1,0} + 2\bt (x_+y_+)_{1,0} \\
                        & = (x*_ay)_+ - (x*_ay)_\al + 2\bt((x_+y_+)_+ - (x_+y_+)_\al).
        \end{align*}
        As $(x_+y_+)_+ = x_+y_+$, rearranging gives the final claim.

        \begin{lemma}
                \label{lem-c-c}
                We have that, if $\al-2\bt$ is invertible, then
                \begin{equation}
                        \begin{aligned}
                                \label{eq-ss}
                                ss = & \frac{1}{2}\frac{\al-\bt}{\al-2\bt}\Bigl[
                                \left(4(1-2\al)\lm_1^2
                                + 2(\al^2+\al\bt-4\bt)\lm_1
                                + \al\bt\lm_2
                                - \bt(\al^2+9\al\bt-4\bt^2-4\bt)
                                \right)a_0 \\
                                & + \left(
                                2(-\al^2+6\al\bt+\al-4\bt)\lm_1
                                - \bt(10\al\bt-4\bt^2+\al-6\bt)
                                \right)(a_1)_+
                                + \bt(\al-4\bt)(\al-\bt)(a_2)_+ \\
                                & + \frac{2}{\al-\bt}\left(
                                \al(3\al-2\bt-1)\lm_1
                                - \bt(6\al^2-10\al\bt+4\bt^2-\al)
                                \right)s
                                -\al\bt s_2
                                + \bt(\al-2\bt)s_2^f
                                \Bigr],
                        \end{aligned}
                \end{equation}
                from the equation, for any $x,y\in A$,
                \begin{equation}
                        \begin{aligned}
                                \label{eq (acx)(acy)}
                                (a\circ x)(a\circ y)
                                = {} & \left(\frac{\al}{2}-\bt\right)(x*_ay)_+
                                        - \frac{\al^2}{2(\al-2\bt)}(x*_ay)_\al
                                        + \bt(\al-\bt)x_+y_+ \\
                                & + \al^2\left(1+\frac{\bt}{\al-2\bt}\right)((\lm^{a}(y)-\bt)x_\al+(\lm^{a}(x)-\bt)y_\al) \\
                                & + (\lm^{a}(x)\lm^{a}(y)(1-\al) + (\lm^{a}(x)+\lm^{a}(y))\left(\frac{\al}{2}-1\right)\bt + \bt^2)a.
                        \end{aligned}
                \end{equation}
        \end{lemma}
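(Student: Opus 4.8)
The plan is to establish the general identity \eqref{eq (acx)(acy)} first and then specialise it to \eqref{eq-ss}. Write $u=a\circ x$ and $v=a\circ y$. By \eqref{eq cdot al} both lie in $A^a_{\{1,0,\al\}}$, so they are fixed by $\tau(a)$, with $\al$-parts $u_\al=(\al-\bt)x_\al$ and $v_\al=(\al-\bt)y_\al$ and with explicit $\{1,0\}$-parts. Since $\al\star\al=\{1,0\}$, $\{1,0\}\star\al=\{\al\}$ and $\{1,0\}\star\{1,0\}=\{1,0\}$, the fusion rules split the product as
\begin{equation*}
uv=\bigl(u_{1,0}v_{1,0}+u_\al v_\al\bigr)+\bigl(u_{1,0}v_\al+u_\al v_{1,0}\bigr),
\end{equation*}
the first bracket lying in $A^a_{\{1,0\}}$ and the second in $A^a_\al$. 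I would compute each bracket using $aa=a$, $a\,x_0=0$ and $a\,x_\al=\al x_\al$.

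For the $\{1,0\}$-part, after expanding I substitute $x_{1,0}y_{1,0}=(x_+y_+)_{1,0}-x_\al y_\al$; the coefficient of $x_\al y_\al$ collapses by $(\al-\bt)^2-\bt^2=\al(\al-2\bt)$, and I remove the remaining $x_\al y_\al$ through \eqref{eq-alal}. The identity $\bt^2+(\al-2\bt)\bt=\bt(\al-\bt)$ then yields exactly the $(x*_ay)_{1,0}$-, $x_+y_+$- and $a$-coefficients of \eqref{eq (acx)(acy)}. For the $\al$-part, the cross term reduces to $(x_+y_+)_\al$, which I replace by \eqref{eq-++al} (a rearrangement of \eqref{eq star al}); collecting the coefficients of $(x*_ay)_\al$, $(x_+y_+)_\al$, $x_\al$ and $y_\al$ reproduces the rest. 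Both substitutions require $\al-2\bt$ to be invertible, which is the hypothesis.

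To obtain \eqref{eq-ss} I put $a=a_0$ and $x=y=a_1$, so the left side is $ss$ and $\lm^a(x)=\lm^a(y)=\lm_1$, and then expand each building block in the spanning set. From \eqref{eq-x-bt} and $a_1^{\tau(a_0)}=a_{-1}$ one has $(a_1)_+=\tfrac12(a_1+a_{-1})$, and since $a_1a_{-1}=s_2^f+\bt(a_1+a_{-1})$ this gives $(a_1)_+(a_1)_+$ and the $s_2^f$-term; \eqref{eq-x-al} with $a_0a_1=s+\bt(a_0+a_1)$ gives $(a_1)_\al$ and hence the $s$-dependence. Finally $a_1*_{a_0}a_1=2(a_0\circ a_1)a_1=2a_1s$ is read off from Lemma \ref{lem-a(acb)}, which brings in $a_2$; projecting $(a_2)_\al$ by \eqref{eq-x-al} produces the $s_2$-term and $(a_2)_+=\tfrac12(a_2+a_{-2})$ the $(a_2)_+$-term. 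Collecting the coefficients of $a_0,(a_1)_+,(a_2)_+,s,s_2,s_2^f$ then gives \eqref{eq-ss}.

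The obstacle is bookkeeping rather than any single idea: every product must be carried back to the spanning set through the fusion rules while the factors $\tfrac1{\al-2\bt}$ accumulate. The genuinely delicate point is $a_1*_{a_0}a_1=2a_1s$: evaluating $a_1s$ via Lemma \ref{lem-a(acb)} inevitably attaches the flip value $\lm_1^f=\lm^{a_1}(a_0)$ to the $a_1$-coefficient, whereas \eqref{eq-ss} is written in $\lm_1$ alone. Tracking, say, the coefficient of $s$ shows the two contributions combine as $\tfrac{2\al(\al-\bt)}{\al-2\bt}\lm_1-\tfrac{\al(1-\al)}{\al-2\bt}\lm_1^f$, which collapses to the stated $\tfrac{\al(3\al-2\bt-1)}{\al-2\bt}\lm_1$ precisely when the symmetry $\lm_1^f=\lm_1$ between the two generators is imposed. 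I would therefore flag this symmetry as the condition under which \eqref{eq-ss} holds and confirm the full list of polynomial coefficients by a computer-algebra check, as the paper does with \cite{gap}.
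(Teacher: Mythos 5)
Your strategy is the same two-step one the paper uses: first establish the general identity \eqref{eq (acx)(acy)}, then specialise at $a=a_0$, $x=y=a_1$ and push every term back into the spanning set via $(a_1)_+=\tfrac12(a_1+a_{-1})$, the square of an idempotent's $+$-part (equivalently $a_1a_{-1}=s_2^f+\bt(a_1+a_{-1})$), the expression \eqref{eq x al alt} for $(a_1)_\al$, and $a_1*_{a_0}a_1=2a_1s$ from Lemma \ref{lem-a(acb)}. For the first step you organise the work differently: the paper multiplies the auxiliary elements $a\circ x+\bt x_+=(\lm^a(x)-\bt)a+\al x_\al$ in two ways, whereas you decompose $(a\circ x)(a\circ y)$ directly by the fusion rules into $\{1,0\}$- and $\al$-parts and then eliminate $x_\al y_\al$ and $(x_+y_+)_\al$ via \eqref{eq-alal} and \eqref{eq-++al}; both routes work at comparable cost. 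One caution for your route: the second displayed line of \eqref{eq cdot al} is mistyped (the $1$-component of $a\circ x$ is $((1-\bt)\lm^a(x)-\bt)a$, not $(\lm^a(x)-\bt)a$), and your collapses $(\al-\bt)^2-\bt^2=\al(\al-2\bt)$ and $\bt^2+(\al-2\bt)\bt=\bt(\al-\bt)$ reproduce the $a$-coefficient of \eqref{eq (acx)(acy)} only with the corrected $1$-component; the paper's own route sidesteps this by never needing $(a\circ x)_1$ explicitly.

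Your ``delicate point'' is not a defect of your argument but a genuine discrepancy in the paper, and your computation of it is exactly right. The flip value enters unavoidably through $a_1s$, whose $a_1$-coefficient is $\lm_1^f(1-\al)+\bt(\al-\bt-1)$, and indeed the paper's own intermediate display in this proof carries $\lm_1^f$; reading off the coefficient of $s$ there gives
$\tfrac{2\al(\al-\bt)}{\al-2\bt}\lm_1-\tfrac{\al(1-\al)}{\al-2\bt}\lm_1^f$
plus constants, while \eqref{eq-ss} prints $\tfrac{\al(3\al-2\bt-1)}{\al-2\bt}\lm_1$, and the two agree precisely when $\lm_1^f=\lm_1$ (the same identification is hidden in the $a_0$- and $(a_1)_+$-coefficients). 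So the paper's final ``collecting terms'' silently imposes the symmetry that is only officially introduced in Section \ref{sec-fus-rule}. Note the downstream consequence of how one repairs this: Lemma \ref{lem-t-stab} obtains \eqref{eq-a3} by equating \eqref{eq-ss} with its flip while treating $\lm_1$ and $\lm_1^f$ as distinct, so either \eqref{eq-ss} should be restated with $\lm_1^f$ in the affected coefficients (changing \eqref{eq-a3} accordingly), or the hypothesis $\lm_1=\lm_1^f$ must be assumed from this lemma onward, which is harmless for the covers in Sections \ref{sec-5A} to \ref{sec-3A} since they assume it anyway. Flagging that hypothesis, as you do, makes the statement true as printed, and your plan to verify the full coefficient lists by computer algebra is the appropriate finish.
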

        \begin{proof}
                Note that
                \begin{equation}
                        a\circ x + \bt x_+ = ax - \bt a - \bt x + \bt x_+ = ax -\bt x_\bt - \bt a = ax_+ - \bt a = (\lm^{a}(x)-\bt)a + \al x_\al.
                \end{equation}
                Now we can compute in two ways:
                \begin{align*}
                        (a\circ x + \bt x_+&)(a\circ y + \bt y_+)
                                = (a\circ x)(a\circ y) + \bt((a\circ x)y_+ + (a\circ y)x_+) + \bt^2 x_+y_+ \\
                                = {} & (a \circ x)(a\circ y) + \bt(x*_ay)_+ + \bt^2x_+y_+ \\
                        \intertext{since $a\circ x = (a\circ x)_+$,
                                so $(a\circ x)y_+ = ((a\circ x)y)_+$, on the one hand;
                                on the other,}
                                = {} & ((\lm^{a}(x)-\bt)a + \al x_\al)((\lm^{a}(y)-\bt)a + \al y_\al) \\
                                = {} & \al^2 x_\al y_\al + \al^2((\lm^{a}(x)-\bt)y_\al + (\lm^{a}(y)-\bt)x_\al) + (\lm^{a}(x)-\bt)(\lm^{a}(y)-\bt)a
                        \intertext{and using \eqref{eq-alal},}
                                = {} & \al\left(\frac{1}{2}(x*_ay)_{1,0} + \bt(x_+y_+)_{1,0}\right)
                                        + \al^2( (\lm^{a}(y)-\bt)x_\al + (\lm^{a}(x)-\bt)y_\al ) \\
                                &        + \left(\lm^{a}(x)\lm^{a}(y)(1-\al) +\bt\left(\frac{\al}{2}-1\right)(\lm^{a}(x)+\lm^{a}(y)) + \bt^2\right)a.
                \end{align*}
                So we may rearrange for our desired term:
                \begin{align*}
                        (a\circ x)(a\circ y)
                                = {} & \left(\frac{\al}{2}-\bt\right)(x*_ay)_+ - \frac{\al}{2}(x*_ay)_\al
                                        + \bt(\al-\bt)x_+y_+ - \al\bt(x_+y_+)_\al \\
                                & + \al^2( (\lm^{a}(y)-\bt)x_\al + (\lm^{a}(x)-\bt)y_\al ) \\
                                &        + (\lm^{a}(x)\lm^{a}(y)(1-\al) + (\lm^{a}(x)+\lm^{a}(y))\left(\frac{\al}{2}-1\right)\bt + \bt^2)a
                        \intertext{and finally, using \eqref{eq-++al},}
                                = {} & \left(\frac{\al}{2}-\bt\right)(x*_ay)_+ - \al\left(\frac{1}{2}+\frac{\bt}{\al-2\bt}\right)(x*_ay)_\al
                                        + \bt(\al-\bt)x_+y_+ \\
                                & + \al^2\left(1+\frac{\bt}{\al-2\bt}\right)(\lm^{a}(y)x_\al+\lm^{a}(x)y_\al)
                                        - \left(\frac{\al^2\bt^2}{\al-2\bt}+\al^2\bt\right)(x_\al+y_\al) \\
                                & + (\lm^{a}(x)\lm^{a}(y)(1-\al) + (\lm^{a}(x)+\lm^{a}(y))\left(\frac{\al}{2}-1\right)\bt + \bt^2)a,
                \end{align*}
                so we arrive at our conclusion \eqref{eq (acx)(acy)} after collecting terms.

                Now also note that,
                with respect to the $\Phi$-axis $a$, for any idempotent $e\in A$,
                \begin{equation}
                        \label{eq e+e+}
                        e_+e_+ = \frac{1}{2}e\circ e^{\tau(a)} + \left(\frac{1}{2}+\bt\right)e_+.
                \end{equation}
                By definition of $\tau(a)$, $e_+ = \frac{1}{2}(e+e^{\tau(a)})$.
                Hence, multiplying out,
                $e_+e_+ = \frac{1}{4}(e+e^{\tau(a)}) + \frac{1}{2}ee^{\tau(a)}$,
                and we rewrite the expression using the definition of $\circ$.
                For the product $ss$,
                we specialise: for any $x\in A$,
                \begin{equation}
                        \begin{aligned}
                                \label{eq cdot x cdot x}
                                (a\circ x)(a\circ x)
                                        = {} & \left(\frac{\al}{2}-\bt\right)(x*_ax)_+
                                                + \bt(\al-\bt)x_+x_+ \\
                                        & + (\lm^{a}(x)^2(1-\al)+\lm^{a}(x)(\al-2)\bt+\bt^2)a \\
                                        & + 2\al^2\left(1+\frac{\bt}{\al-2\bt}\right)(\lm^{a}(x)-\bt)x_\al
                                                - \frac{\al^2}{2(\al-2\bt)}(x*_ax)_\al.
                        \end{aligned}
                \end{equation}
                We need a number of auxiliary expressions.
                Observe that \eqref{eq-x-al} can be rewritten as,
                \begin{equation}
                        \label{eq x al alt}
                        x_\al = \frac{1}{\al}(a\circ x + \bt(a + x) -\lm^{a}(x)a - \bt x_\bt)
                                = \frac{1}{\al}(a\circ x + (\bt-\lm^{a}(x))a + \bt x_+).
                \end{equation}
                By application of \eqref{eq a(acx)} with $a_1$ in place of $a$,
                substituting $\frac{1}{2}(a_0+a_2)$ for ${a_0}^{a_1}_+$,
                \begin{align}
                        {a_1}*_{a_0}{a_1} = 2s{a_1}
                                = 2(\al-\bt)s + 2(\lm_1^f(1-\al)+\bt(\al-\bt-1)){a_1} + (\al-\bt)\bt({a_0}+a_2).
                \end{align}
                We immediately deduce,
                using $s_+ = s$ and $a_+ = a$,
                then using \eqref{eq cdot al}
                and \eqref{eq x al alt},
                \begin{align}
                        \label{eq b * b +}
                        & ({a_1}*_{a_0}{a_1})_+ = 2(\al-\bt)s + 2(\lm_1^f(1-\al)+\bt(\al-\bt-1))(a_1)_+ + (\al-\bt)\bt({a_0}+{(a_2)}_+), \\
                        & \begin{aligned}
                                \label{eq b * b al}
                                ({a_1}*_{a_0}{a_1})_\al = {} & 2(\al-\bt)s_\al + 2(\lm_1^f(1-\al)+\bt(\al-\bt-1))(a_1)_\al + (\al-\bt)\bt((a_0)_\al + {(a_2)}_\al) \\
                                = {} & \frac{2}{\al}((\al-\bt)^2 + \lm_1^f(1-\al) + \bt(\al-\bt-1))(s + (\bt-\lm_1){a_0} + \bt (a_1)_+) \\
                                &        + (\al-\bt)\frac{\bt}{\al}(s_2 + (\bt-\lm_2){a_0} + \bt{(a_2)}_+).
                        \end{aligned}
                \end{align}

                Finally, we substitute $a_1$ in place of $x$ in \eqref{eq cdot x cdot x},
                and substitute the expressions we collected
                for $({a_1}*_{a_0}{a_1})_+$ in \eqref{eq b * b +},
                for $(a_1)_+(a_1)_+$ in \eqref{eq e+e+},
                for $(a_1)_\al$ in \eqref{eq x al alt},
                and for $({a_1}*_{a_0}{a_1})_\al$ in \eqref{eq b * b al},
                to find
                \begin{gather}
                        \begin{aligned}
                                ss = {} &
                                        \left(\frac{\al}{2}-\bt\right)[2(\al-\bt)s + 2(\lm_1^f(1-\al)+\bt(\al-\bt-1))(a_1)_+ + (\al-\bt)\bt(a+{a^{\tau(b)}}_+)] \\
                                        & + \bt(\al-\bt)\left(\frac{1}{2}s_2^f + \left(\frac{1}{2}+\bt\right)(a_1)_+\right)
                                        + (\lm_1^2(1-\al)+\lm_1(\al-2)\bt+\bt^2)a_0 \\
                                        & + 2\al^2\left(1+\frac{\bt}{\al-2\bt}\right)\frac{\lm_1-\bt}{\al}(s + (\bt-\lm_1)a_0 + \bt (a_1)_+)
                                        - \frac{\al^2}{2(\al-2\bt)}\biggl[\frac{2}{\al}((\al-\bt)^2 + \lm_1^f(1-\al) \\
                                        & + \bt(\al-\bt-1))(s + (\bt-\lm_1)a_0 + \bt (a_1)_+)
                                        + (\al-\bt)\frac{\bt}{\al}(s_2 + (\bt-\lm_2)a_0 + \bt(a_2)_+)\biggr]
                        \end{aligned} \\
                        \begin{aligned}
                                = {} &
                                \left[ \left(\frac{\al}{2}-\bt\right)(\al-\bt)\bt
                                        + \lm_1^2(1-\al)+\lm_1(\al-2)\bt
                                        + \bt^2
                                        + 2\al\left(1+\frac{\bt}{\al-2\bt}\right)(\lm_1-\bt)(\bt-\lm_1)\right. \\
                                &\left.        - \frac{\al}{2(\al-2\bt)}\left(
                                                \bt(\al-\bt)(\bt-\lm^{a}(a^{\tau(b)}))
                                                + 2((\al-\bt)^2 + \lm_1^f(1-\al) + \bt(\al-\bt-1))(\bt-\lm_1)
                                                \right)
                                \right]a_0 \\
                                & + \left[ (\al-2\bt)(\lm_1^f(1-\al)+\bt(\al-\bt-1))
                                        + \bt(\al-\bt)\left(\frac{1}{2}+\bt\right)
                                        + 2\al\left(1+\frac{\bt}{\al-2\bt}\right)(\lm_1-\bt)\bt\right. \\
                                        &\left. - \frac{\al}{\al-2\bt}((\al-\bt)^2+\lm_1^f(1-\al)+\bt(\al-\bt-1
                                \right] (a_1)_+ \\
                                & + \left[
                                        \bt(\al-\bt)\left(\frac{\al}{2}-\bt\right)
                                        - \frac{\al^2}{2(\al-2\bt)}(\al-\bt)\frac{\bt^2}{\al}
                                \right](a_2)_+
                                        - \frac{\al\bt}{2(\al-2\bt)}(\al-\bt) s_2
                                        + \frac{\bt(\al-\bt)}{2} s_2^f \\
                                & + \left[ (\al-\bt)(\al-2\bt)
                                        + 2\al\left(1+\frac{\bt}{\al-2\bt}\right)(\lm_1-\bt)
                                        - \frac{\al}{\al-2\bt}((\al-\bt)^2 + \lm_1^f(1-\al) + \bt(\al-\bt-1))
                                \right] s \\
                        \end{aligned}
                \end{gather}
                which is the equation given in the statement,
                once its terms have been collected.
        \end{proof}

        \gap
        From now on, for the rest of this section,
        we will assume that $\al-2\bt$ is invertible.

        \begin{lemma}
                \label{lem-t-stab}
                If $\al-4\bt$ is invertible, then
                \begin{equation}
                        \begin{aligned}
                                \label{eq-a3}
                                a_3 & = a_{-2}
                                + \frac{1}{\bt(\al-\bt)(\al-4\bt)}\Biggl(
                                \left(4\al\bt\lm_1-2(\al^2-(4\bt+1)\al+4\bt)\lm_1^f
                                        -\al^2\bt-\al\bt(5\bt-1)+6\bt^2\right)a_{-1} \\
                                & + \frac{1}{\al-\bt}\Bigl( 4(3\al^2-(4\bt-1)\al+2\bt)\lm_1^2
                                        + (4\al(\al-1)\lm_1^f-6\al^3-2(3\bt-1)\al^2+2\al\bt(8\bt+1)-8\bt^2)\lm_1 \\
                                & - 4\al\bt(\bt-1)\lm_1^f - 2\al\bt(\al-\bt)\lm_2
                                        + 2\bt\al^3 + \bt(6\bt-1)\al^2-\al\bt^2(12\bt-1)+2\bt^3(2\bt-1)\Bigr)a_0 \\
                                & + \frac{1}{\al-\bt}\Bigl( 4\al(\al-1)\lm_1\lm_1^f
                                        - 4\al\bt(\bt-1)\lm_1
                                        + 4(3\al^2-4\al(4\bt-1)+2\bt){\lm_1^f}^2
                                        - 2(3\al^3+\al^2(3\bt-1) \\
                                &        - \al\bt(8\bt+1)+4\bt^2)\lm_1^f
                                        + 2\al\bt(\al-\bt)\lm_2 + 2\al^3\bt + \al^2\bt(6\bt-1)
                                        - \al\bt^2(12\bt+1) + 2\bt^3(2\bt+1) \Bigr)a_1 \\
                                & + \Bigl( 2(\al^2-\al(4\bt+1)+4\bt)\lm_1 - 4\al\bt\lm_1^f + \al^2\bt+\al\bt(5\bt+1)-6\bt^2 \Bigr)a_2 \\
                                & + \frac{1}{\al-\bt}\left(4\al(\al-2\bt+1)\lm_1+4\al(-\al+2\bt-1)\lm_1^f\right)s \Biggr) + \frac{4}{\al-4\bt}\left(s_2^f-s_2\right),
                        \end{aligned}
                \end{equation}
                and therefore the $R$-span of $B$ is stable under $T$.
                Thus we deduce expressions for
                \begin{equation*}
                        \label{eq terms via t}
                        a_2s,\quad
                        a_{-1}s,\quad
                        a_{-2}s,\quad
                        a_1s_2^f,\quad
                        a_{-1}s_2^f,\quad
                        a_2s_2,\quad
                        a_{-2}s_2,
                \end{equation*}
        \end{lemma}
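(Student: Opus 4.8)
The plan is to derive the single relation \eqref{eq-a3} placing the ``wrap-around'' axis $a_3$ inside the $R$-span $V$ of $B$, and then to bootstrap $T$-stability and the seven products from it. The first thing I would record is that $s,s_2,s_2^f$ are each fixed by $\tau(a_0)$ and $\tau(a_1)$, hence $T$-invariant. Since the Miyamoto involutions are algebra automorphisms fixing $\bt\in R$, they commute with $\circ$; and by \eqref{eq cdot al} the element $a\circ x$ depends only on $x_+=x_1+x_0+x_\al$, so $a\circ x=a\circ x'$ whenever $x-x'\in A^a_\bt$. Using $a_1^{\tau(a_0)}=a_{-1}$, $a_2^{\tau(a_1)}=a_0$, etc., together with $a_j^{\tau(a_i)}-a_k\in A^{a_i}_\bt$ for the appropriate $k$, this gives $s^{\tau(a_0)}=a_0\circ a_{-1}=a_0\circ a_1=s$ and likewise $s^{\tau(a_1)}=s$, $s_2^{\tau(a_i)}=s_2$, $(s_2^f)^{\tau(a_i)}=s_2^f$. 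In particular each of $s,s_2,s_2^f$ lies in $A^{a_0}_+\cap A^{a_1}_+$.

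To produce $a_3$ I would compute the product $ss$ a \emph{second} time. Lemma~\ref{lem-c-c} computes $ss=(a_0\circ a_1)(a_0\circ a_1)$ from the eigenvector decomposition with respect to $a_0$ via \eqref{eq cdot x cdot x}, landing in the span of $a_0,(a_1)_+,(a_2)_+,s,s_2,s_2^f$; expanding $(a_1)_+=\tfrac12(a_1+a_{-1})$ and $(a_2)_+=\tfrac12(a_2+a_{-2})$ shows $ss\in V$. Rerunning the identical computation with $a_1$ as the base axis is legitimate, since $a_0,a_1$ satisfy the same hypotheses; this is the formal flip $a_i\mapsto a_{1-i}$, $\lm_j\mapsto\lm_j^f$, $s_2\mapsto s_2^f$ (I am not asserting the flip is an automorphism, only that the symbolic computation is symmetric under relabelling). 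It yields a second honest expression for the \emph{same} element $ss$, now in the span of $a_1,(a_0)^{a_1}_+,(a_{-1})^{a_1}_+,s,s_2^f,s_2$, in which $(a_{-1})^{a_1}_+=\tfrac12(a_{-1}+a_3)$ introduces $a_3$. Equating the two expressions gives one identity in the $R$-module $A$ in which $a_3$ occurs only through the second computation, with scalar coefficient the flip-image of the coefficient of $a_{-2}$ in \eqref{eq-ss}, namely $\tfrac{\bt(\al-4\bt)(\al-\bt)^2}{4(\al-2\bt)}$. This is invertible exactly when $\al-4\bt$ is (given that $\bt,\al-\bt,\al-2\bt$ already are), so I can solve for $a_3$; because the $a_3$- and $a_{-2}$-coefficients coincide, the solution has the shape $a_3=a_{-2}+\dotsb$ of \eqref{eq-a3}. \textbf{The main obstacle is purely the bulk of this polynomial computation}: one must substitute \eqref{eq-ss} and its flip, collect the terms in $a_{-1},a_0,a_1,a_2,s,s_2,s_2^f$, and check the $s_2,s_2^f$ coefficients collapse to $\tfrac{4}{\al-4\bt}(s_2^f-s_2)$; I would delegate this bookkeeping to the symbolic computation already relied on in this section.

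Granting \eqref{eq-a3}, the $T$-stability of $V$ is formal. The reflection $\tau(a_0)$ sends $\{a_{-2},\dotsc,a_2\}$ to itself by $a_j\mapsto a_{-j}$ and fixes $s,s_2,s_2^f$, so $\tau(a_0)V\subseteq V$. The reflection $\tau(a_1)$ acts by $a_j\mapsto a_{2-j}$, fixes $s,s_2,s_2^f$, maps $a_0,a_1,a_2$ into $\{a_0,a_1,a_2\}$, and sends $a_{-1}\mapsto a_3\in V$; the only potential escape is $a_{-2}\mapsto a_4$. Applying $\tau(a_1)$ to \eqref{eq-a3} turns it into an identity $a_{-1}=a_4+(\text{terms in }a_0,a_1,a_2,a_3,s,s_2,s_2^f)$, the coefficient of $a_{-2}$ in \eqref{eq-a3} being $1$; since $a_3\in V$ this solves for $a_4\in V$. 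Hence $\tau(a_1)V\subseteq V$, and $V$ is stable under $T=\la\tau(a_0),\tau(a_1)\ra$.

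Finally the seven products follow by transport of structure. As $s,s_2,s_2^f$ are $T$-invariant and $V$ is $T$-stable, for any $t\in T$ one has $t(a_i\,s)=a_i^{\,t}\,s$, an explicit element of $V$ obtained by applying $t$ to the known formula, and similarly with $s_2,s_2^f$. Choosing $t$ to carry a known product to a desired one then yields $a_2s=\tau(a_1)(a_0s)$, $a_{-1}s=\tau(a_0)(a_1s)$, $a_{-2}s=\tau(a_0)(a_2s)$, $a_2s_2=\tau(a_1)(a_0s_2)$, $a_{-2}s_2=\tau(a_0)(a_2s_2)$, and $a_{-1}s_2^f=\tau(a_0)(a_1s_2^f)$, where $a_1s_2^f$ is read off from \eqref{eq a(acx)} (with $a=a_1$, $x=a_{-1}$) once $a_3$ is substituted from \eqref{eq-a3}. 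This recovers each listed product from Lemma~\ref{lem-a(acb)} and the relations just derived, completing the claim.
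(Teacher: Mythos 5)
Your proposal is correct and follows essentially the same route as the paper: you obtain \eqref{eq-a3} by equating the expression for $ss$ from Lemma \ref{lem-c-c} with the relabelled (flipped) computation based at $a_1$ — exactly the paper's identity $(ss)^f = ss$ — solve for $a_3$ using the flip-invariant $a_{-2}$-coefficient whose invertibility is precisely the $\al-4\bt$ hypothesis, and then deduce $T$-stability and the seven products by transporting known products under $T$ (and relabelling). The only cosmetic deviations are that you verify the $T$-invariance of $s,s_2,s_2^f$ explicitly and obtain $a_4\in RB$ and $a_1s_2^f$ by direct application of $\tau(a_1)$ and of \eqref{eq a(acx)} rather than via the paper's composite $T,f$-orbit bookkeeping, which amounts to the same thing.
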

        \begin{proof}
                Lemma \ref{lem-c-c} calculates $ss$
                using eigenspace decompositions and fusion rules with respect to $a_0$,
                and by repeating the computation
                with the roles of $a_0$ and $a_1$ swapped,
                we obtain an expression for $(ss)^f$ from \eqref{eq-ss}.
                On the other hand, $s$ is symmetric in $a_0$ and $a_1$
                so that $ss$ is invariant under interchange of $a_0$ and $a_1$.
                The equation \eqref{eq-a3} follows from the equality $(ss)^f = ss$.
                Since $a_3 = (a_{-2})^f$,
                we see the desired term in the expression for $(ss)^f$:
                \begin{equation}
                        \begin{aligned}
                                & \frac{1}{2}\frac{\al-\bt}{\al-2\bt}\left[
                                \left(4(1-2\al){\lm_1^f}^2
                                + 2(\al^2+\al\bt-4\bt)\lm_1^f
                                + \al\bt\lm_2^f
                                - \bt(\al^2+9\al\bt-4\bt^2-4\bt)
                                \right)b \right. \\
                                & + \left(
                                (-\al^2+6\al\bt+\al-4\bt)\lm_1^f
                                - \frac{1}{2}\bt(10\al\bt-4\bt^2+\al-6\bt)
                                \right)(a+a_2)
                                + \frac{1}{2}\bt(\al-4\bt)(\al-\bt)(a_{-1} + a_3) \\
                                & + \frac{2}{\al-\bt}\left(
                                \al(3\al-2\bt-1)\lm_1^f
                                - \bt(6\al^2-10\al\bt+4\bt^2-\al)
                                \right)s
                                -\al\bt s_2^f
                                \left. + \bt(\al-2\bt)s_2
                                \right].
                        \end{aligned}
                \end{equation}
                Rearranging yields the claim for $a_3$.

                Now observe that
                \begin{equation}
                        B^f = \{a_3,a_2,a_1,a_0,a_{-1},s,s_2^f,s_2\}.
                \end{equation}
                Out of these, the only term not already in $B$ is $a_3$.
                Our above expression for this term
                shows that $RB$ and $RB^f$ coincide.
                As $a_4 = a_{-3}^f$,
                that is, $a_3^{\tau(a_0)}$ with the roles of $a_0,a_1$ reversed,
                we have that $B^{\tau(a_1)}\subseteq RB^f = RB$ also,
                and since $B^{\tau(a_0)} = B$ this proves $RB^T = RB$.

                Now all of the terms in \eqref{eq terms via t}
                are in the $T,f$-orbit of $a_0s$ or $a_0s_2$:
                \begin{align*}
                        a_2s & = (a_0s)^{\tau(a_1)}, &
                        a_1s_2^f & = (a_0s_2)^f, \\
                        a_{-1}s & = (a_0s)^{\tau(a_1)f}, &
                        a_{-1}s_2^f & = (a_0s_2)^{f\tau(a_0)}, \\
                        a_{-2}s & = (a_0s)^{\tau(a_1)\tau(a_0)}, &
                        a_2s_2 & = (a_0s_2)^{\tau(a_1)}, \\
                        & & a_{-2}s_2 & = (a_0s_2)^{\tau(a_1)\tau(a_0)}.
                \end{align*}
        \end{proof}

        From now on we also assume that $\al-4\bt$ is invertible.
        \begin{lemma}
                \label{lem-a(bcbtaua)}
                We have that
                \begin{equation}
                        \begin{aligned}
                                a_0s_2^f
                                & = \frac{1}{\al-2\bt}\biggl(
                                        \frac{1}{\al-\bt}\Bigl(
                                        2(3\al-(4\bt-1)\al+2\bt)\lm_1^2 + 2\al(\al-1)\lm_1\lm_1^f
                                        + 2(-2\al^3+\al^2+\al\bt(2\bt-1))\lm_1 \\
                                        & + 4\bt(-\al^2+\al(\bt+1)-\bt)\lm_1^f
                                        - \al\bt(\al-\bt)\lm_2 + 4\al^3\bt - 2\bt(2\bt+1)\al^2+2\al\bt^2(\bt+1)-2\bt^4\Bigr)a_0 \\
                                        & - (2\al\lm_1+2(\al-1)\lm_1^f-2\al^2+(\bt+\frac{1}{2})\al-2\bt^2+\bt)\bigl(\bt(a_1+a_{-1})+s\bigr)
                                        + (\al-\bt)\bt^2\bigl(a_2+a_{-2}\bigr) \\
                                        & + 2\bt(\al-\bt)s_2\biggr).
                        \end{aligned}
                \end{equation}
                and from this follow expressions,
                using the $T$-invariance explained in Lemma \ref{lem-t-stab},
                for
                \begin{equation*}
                        a_2s_2^f,\quad
                        a_{-2}s_2^f,\quad
                        a_1s_2,\quad
                        a_{-1}s_2 \quad
                        \text{ and } \quad
                        a_{-2}a_1,\quad
                        a_{-2}a_2,\quad
                        a_{-1}a_2.
                \end{equation*}
        \end{lemma}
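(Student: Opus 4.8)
The plan is to turn the computation of $a_0s_2^f$ into a single self-referential equation in $RB$ and then solve it. The tool is \eqref{eq e+e+}, applied to the idempotent $a_1$ with respect to the axis $a_0$: since $a_1^{\tau(a_0)}=a_{-1}$ and $s_2^f=a_1\circ a_{-1}$, it reads $(a_1)_+(a_1)_+=\tfrac12 s_2^f+(\tfrac12+\bt)(a_1)_+$, so that
\[
        a_0s_2^f=2\,a_0\bigl((a_1)_+(a_1)_+\bigr)-(1+2\bt)\,a_0(a_1)_+ .
\]
Here $a_0(a_1)_+=s+\bt a_0+\bt(a_1)_+$ is already known (using $a_0\circ a_{-1}=s$), so the entire task is to evaluate $P:=a_0\bigl((a_1)_+(a_1)_+\bigr)$.

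First I would split $(a_1)_+(a_1)_+=(a_1)_{1,0}(a_1)_{1,0}+2(a_1)_{1,0}(a_1)_\al+(a_1)_\al(a_1)_\al$ along the $\ZZ/2$-graded fusion rules of $a_0$. Multiplying by $a_0$, the first summand contributes only its $1$-part $\lm_1^2 a_0$, and the middle summand, a pure $\al$-vector, contributes $\al\bigl((a_1)_+(a_1)_+\bigr)_\al$, known from \eqref{eq-++al}; neither involves $s_2^f$. The delicate term is $a_0\bigl((a_1)_\al(a_1)_\al\bigr)$. The decisive step is to rewrite $(a_1)_\al$ by \eqref{eq x al alt} as $\tfrac1\al\bigl(s+(\bt-\lm_1)a_0+\bt(a_1)_+\bigr)$, square it, and multiply by $a_0$; the result is a combination of the products $a_0s$ (Lemma \ref{lem-a(acb)}), $a_0(a_1)_+$, and $a_0\bigl(s(a_1)_+\bigr)=\tfrac12 a_0(a_1s+a_{-1}s)$ (Lemma \ref{lem-t-stab}), all free of $s_2^f$, together with the two terms $\tfrac1{\al^2}a_0(ss)$ and $\tfrac{\bt^2}{\al^2}P$.

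The point is that these last two both feed $s_2^f$ back in, but at different weights. In the value of $ss$ from \eqref{eq-ss} the only basis vector whose $a_0$-product is not yet known is $s_2^f$ itself, appearing with coefficient $\tfrac12\bt(\al-\bt)$; substituting $a_0s_2^f=2P-(1+2\bt)a_0(a_1)_+$ converts $a_0(ss)$ into a known vector plus $\bt(\al-\bt)P$. Collecting all contributions to $P$ gives
\[
        P=(\text{known vector in }RB)+\tfrac1{\al^2}\bigl(\bt(\al-\bt)+\bt^2\bigr)P=(\text{known})+\tfrac\bt\al\,P,
\]
so that $\tfrac{\al-\bt}\al P=(\text{known})$. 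Since $\al$ and $\al-\bt$ are invertible in $R''(\Phi)$, this determines $P$, hence $a_0s_2^f$; that the answer carries no $s_2^f$ term and is symmetric under $a_1\leftrightarrow a_{-1}$, $a_2\leftrightarrow a_{-2}$ (forced by $\tau(a_0)$-invariance of $a_0s_2^f$) is a check against the stated formula. The main obstacle is exactly to avoid the naive circularity: feeding $(a_1)_+(a_1)_+$ straight back in through the rearrangement of \eqref{eq cdot x cdot x} makes $s_2^f$ cancel identically and yields a tautology. It is the detour through \eqref{eq x al alt}, which reintroduces $ss$ at the rescaled weight $\tfrac1{\al^2}$ and so produces the discrepant self-coefficient $\tfrac\bt\al\neq1$, that makes the equation solvable. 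Tracking which monomials carry $s_2^f$ through the expansion is large but mechanical, and is where \cite{gap} is used.

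With $a_0s_2^f$ in hand we know $a_0b$ for every $b\in B$. The remaining products then follow by the symmetry catalogued in Lemma \ref{lem-t-stab}. Two are immediate flip images, $a_1s_2=(a_0s_2^f)^f$ and $a_{-1}s_2=(a_0s_2^f)^{f\tau(a_0)}$, since $f$ interchanges $s_2\leftrightarrow s_2^f$ and $\tau(a_0)$ fixes $s_2$. For $a_2s_2^f$ and $a_{-2}s_2^f$ I would pass to the $\rho$-shifted spanning set $B^\rho$: as $RB$ is $T$-stable, $s_2^f$ is an $R$-combination of $B^\rho$, and $a_2\cdot(\text{elements of }B^\rho)=(a_0\cdot B)^\rho$ is known. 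Finally the longer axis products are $T$-images of $a_0a_3$ and $a_0a_4$ — for instance $a_{-2}a_1=(a_0a_3)^{\rho^{-1}}$, $a_{-1}a_2=(a_0a_3)^{\tau(a_1)}$, $a_{-2}a_2=(a_0a_4)^{\tau(a_1)}$ — and $a_0a_3$, $a_0a_4$ are now computable because $a_3,a_4\in RB$ by Lemma \ref{lem-t-stab} (via \eqref{eq-a3}) and every $a_0b$ is known.
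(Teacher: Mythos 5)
Your proposal is correct, but it takes a genuinely different route to the formula than the paper does. The paper never touches \eqref{eq e+e+} or a fixed-point equation: it writes down the explicit $0$-eigenvector $(a_1)_0=\frac{1}{\al}\bigl(((1-\al)\lm_1-\bt)a_0+\frac{1}{2}(\al-\bt)(a_1+a_{-1})-s\bigr)$, expands $(a_1)_0(a_1)_0$ in $RB$ using the products already established ($s_2^f$ entering through $a_1a_{-1}$), and then imposes the fusion rule $0\star0=\{0\}$ once, in the form $a_0\bigl((a_1)_0(a_1)_0\bigr)=0$; in that single relation $a_0s_2^f$ is the only unknown product, carrying the invertible coefficient $\frac{\al-\bt}{2\al}$, so one solves directly. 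Your argument secretly rests on the same input: the step ``the first summand contributes only its $1$-part $\lm_1^2a_0$'' is exactly where $a_0\bigl((a_1)_0(a_1)_0\bigr)=0$ is used, and your solvability condition $\frac{\al-\bt}{\al}\neq 0$ (equivalently $\frac{\al-\bt}{2\al}$ once you pass from $P$ back to $a_0s_2^f$) is the same coefficient the paper divides by. The difference is organizational: you reroute through $(a_1)_+$, \eqref{eq e+e+} and \eqref{eq x al alt}, trading the paper's one explicit square for a self-referential equation whose feedback weights $\bt(\al-\bt)/\al^2$ and $\bt^2/\al^2$ must be tracked and summed to $\bt/\al\neq1$. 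The paper's version is shorter and needs fewer auxiliary products (no $a_0(ss)$, no $a_0(s(a_1)_+)$); yours has the merit of exhibiting why the apparent circularity is benign, which the paper's proof leaves implicit. On the second part you are, if anything, more careful than the paper: its proof appeals to ``$T$-transitivity on $a_{-2},\dotsc,a_2$'', but $T$ has two orbits there (even and odd indices), so $a_1s_2$ and $a_{-1}s_2$ genuinely require the flip $f$, i.e.\ rerunning the $a_0\leftrightarrow a_1$-symmetric derivation, exactly as you do; your remaining identifications, such as $a_2s_2^f=(a_0s_2^f)^{\rho}$ and $a_{-2}a_1=(a_0a_3)^{\rho^{-1}}$, are valid because $s,s_2,s_2^f$ are $T$-fixed, $a_3,a_4\in RB$ by \eqref{eq-a3}, and $RB$ is $T$-stable by Lemma \ref{lem-t-stab}.
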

        \begin{proof}
                Recall from \eqref{eq x al alt} and \eqref{eq-x-bt} that
                \begin{equation}
                        \label{eq b0}
                        (a_1)_0 = \frac{1}{\al}\bigl(
                                (1-\al)\lm_1-\bt)a_0
                                + (\al-\bt)\frac{1}{2}(a_1+a_{-1})
                                - s\bigr).
                \end{equation}
                First we write down, using Lemma \ref{lem-t-stab},
                \begin{equation}
                        \begin{aligned}
                        (a_1)_0(a_1)_0 & = \frac{1}{2\al(\al-2\bt)}\biggl(
                                -\bigl(
                                        2\al(\al-2\bt+1)\lm_1^2
                                        + 2(\al-1)\lm_1\lm_1^f
                                        + 2(-\al^2-2\al\bt+2\bt^2+\bt)\lm_1 \\
                                &        - 2\bt(\al-1)\lm_1^f
                                        - \bt(\al-\bt)\lm_2
                                        + 3\bt\al^2 - 3\bt^3-2\bt^2\bigr)a_0 \\
                                & + (\al-\bt)\bigl(4\bt\lm_1 + 2(\al-1)\lm_1^f - (2\bt-1)\al - 4\bt^2\bigr)(a_1)_+
                                        + \bt(\al-\bt)(a_2)_+ \\
                                & (2(\al-\bt)\lm_1+(\al-1)\lm_1^f-2\al^2+2\al\bt+\bt)s
                                - \bt(\al-\bt)s_2 + (\al-\bt)(\al-2\bt)s_2^f
                        \biggr).
                        \end{aligned}
                \end{equation}
                From the fusion rule $0\star0=0$
                we deduce that $a_0((a_1)_0(a_1)_0)=0$.
                The only product we do not already know is $a_0s_2^f$,
                so substituting and rearranging gives the result.

                The second set of equations
                follows from the fact that $T$ is transitive on $a_{-2},a_{-1},a_0,a_1,a_2$,
                and we have expressions for $a_0x$ for all $x\in B$,
                so we can take any product $a_ix$
                and find a representative $a_0x'$ in the $T$-orbit of $a_ix$
                with an expression for this product in $RB$.
                As $RB$ is $T$-closed, this allows us to calculate any $a_ix$.
        \end{proof}

        \begin{lemma}
                \label{lem-acbacataub}
                We can find an expression for $ss_2$ in $RB$,
                and hence get an equation for $ss_2^f$ too.
                We also have expressions for $s_2s_2$
                and $s_2^fs_2^f$ in $RB$.
        \end{lemma}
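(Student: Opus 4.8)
The plan is to treat both $ss_2$ and $s_2s_2$ as products of the form $(a_0\circ x)(a_0\circ y)$ and push them through the master identity \eqref{eq (acx)(acy)}, whose right-hand side involves only the product $x*_{a_0}y$, the product $x_+y_+$, and the $a_0$-eigenprojections of $x,y$. First I would set $ss_2=(a_0\circ a_1)(a_0\circ a_2)$ and apply \eqref{eq (acx)(acy)} with $a=a_0$, $x=a_1$, $y=a_2$, so that $\lm^{a_0}(a_1)=\lm_1$ and $\lm^{a_0}(a_2)=\lm_2$. The ingredient $a_1*_{a_0}a_2=(a_1\circ a_0)a_2+(a_2\circ a_0)a_1=a_2s+a_1s_2$ already lies in $RB$, the first summand by Lemma \ref{lem-t-stab} and the second by Lemma \ref{lem-a(bcbtaua)}; and $(a_1)_+(a_2)_+=\tfrac14(a_1+a_{-1})(a_2+a_{-2})$ expands into $a_1a_2,a_1a_{-2},a_{-1}a_2,a_{-1}a_{-2}$, each of which is either listed in Lemma \ref{lem-a(bcbtaua)} or a $T$-translate of $a_0a_1$, hence in the $T$-stable module $RB$.

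To evaluate \eqref{eq (acx)(acy)} I must resolve these $RB$-elements into $a_0$-eigencomponents. The $\bt$-part comes from \eqref{eq-x-bt} through the $\tau(a_0)$-action, which sends $a_{\pm i}\mapsto a_{\mp i}$ but fixes each of $s,s_2,s_2^f$ (for $s,s_2$ by \eqref{eq cdot al}, and since $(a_{-1}\circ a_1)^{\tau(a_0)}=a_1\circ a_{-1}=s_2^f$); the $\al$-part is then read off from \eqref{eq-x-al} using the left-multiplications $a_0v$ for $v\in B$ supplied by Lemmas \ref{lem-a(acb)} and \ref{lem-a(bcbtaua)}. With $(a_1*_{a_0}a_2)_+$, $(a_1*_{a_0}a_2)_\al$, $(a_1)_+(a_2)_+$ and $(\lm_2-\bt)(a_1)_\al+(\lm_1-\bt)(a_2)_\al$ thus in hand, I substitute into \eqref{eq (acx)(acy)} and collect over $R_0$ to express $ss_2$ in $RB$. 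Since the flip $f$ fixes $s$, interchanges $s_2$ with $s_2^f$, and satisfies $RB^f=RB$ by Lemma \ref{lem-t-stab}, applying $f$ to this identity immediately gives $ss_2^f=(ss_2)^f$ in $RB$.

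For $s_2s_2=(a_0\circ a_2)^2$ I would run the same argument through the diagonal specialisation \eqref{eq cdot x cdot x} with $x=a_2$: here $a_2*_{a_0}a_2=2a_2s_2$ is the known product of Lemma \ref{lem-t-stab}, while $(a_2)_+(a_2)_+$ comes from \eqref{eq e+e+} as $\tfrac12\,a_2\circ a_{-2}+(\tfrac12+\bt)(a_2)_+$, with $a_2\circ a_{-2}=a_{-2}a_2-\bt(a_2+a_{-2})$ built from the product $a_{-2}a_2$ of Lemma \ref{lem-a(bcbtaua)}. Collecting yields $s_2s_2\in RB$, and then $s_2^fs_2^f=(s_2s_2)^f\in RB$ by the same flip symmetry. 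This computation is the exact analogue of the derivation of $ss$ in Lemma \ref{lem-c-c}, with $a_2$ in place of $a_1$; throughout we use the standing assumptions that $\al-2\bt$ and $\al-4\bt$ are invertible, so that \eqref{eq (acx)(acy)} and the $a_3$-relation of Lemma \ref{lem-t-stab} (and hence the full $T$-stability of $RB$) are in force.

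The main obstacle is purely the scale of the polynomial arithmetic rather than any conceptual difficulty: each eigenprojection drags in the already-long $RB$-expansions of the preceding lemmas, so that a single evaluation of \eqref{eq (acx)(acy)} produces very large coefficients in $R_0$ that must be simplified and gathered against the eight elements of $B$. As signalled in the introduction, this bookkeeping is best discharged by computer algebra \cite{gap}. The only genuine checks beyond mechanical expansion are that every product invoked really is a $T$- or $f$-translate of one already computed, and that after collection the coefficients remain in $R_0$, \ie acquire no denominators beyond the permitted $\al,\bt,\al-\bt,\al-2\bt,\al-4\bt$.
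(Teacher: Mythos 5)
Your proposal is correct, but it reaches the four products by a genuinely different mechanism than the paper. You evaluate $ss_2=(a_0\circ a_1)(a_0\circ a_2)$ directly from the master identity \eqref{eq (acx)(acy)} with $x=a_1$, $y=a_2$ (and $s_2s_2$ from its diagonal form \eqref{eq cdot x cdot x} with $x=a_2$); this is legitimate at this stage because every right-hand-side ingredient --- $a_2s$, $a_2s_2$ from Lemma \ref{lem-t-stab}, $a_1s_2$, $a_{-2}a_1$, $a_{-1}a_2$, $a_{-2}a_2$ from Lemma \ref{lem-a(bcbtaua)}, the products $a_0v$ for $v\in B$, and the $T$-stability of $RB$ --- is already in hand, so no circularity arises, even though the same identity could not have been exploited this way back at the stage of Lemma \ref{lem-c-c}. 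The paper instead uses a cancellation trick: it starts from the fusion-rule equation $a_0\bigl((a_1)_0(a_2)_\al+(a_1)_0(a_2)_0\bigr)=\al\,(a_1)_0(a_2)_\al$, where $(a_2)_0+(a_2)_\al$ has no $s_2$-component, so the unknown $ss_2$ (entering via the product of the $-s/\al$ term of $(a_1)_0$ with the $\pm s_2/\al$ terms of $(a_2)_0$, $(a_2)_\al$) cancels on the left but survives with invertible coefficient $-1/\al$ on the right; one then solves this linear equation for $ss_2$, and obtains $s_2s_2$ from the analogous equation $a_0\bigl((a_2)_0(a_2)_\al+(a_2)_0(a_2)_0\bigr)=\al\,(a_2)_0(a_2)_\al$. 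Both treatments then get $ss_2^f$ and $s_2^fs_2^f$ by the flip. The trade-off: your route is uniform with the derivation of $ss$ and requires no insight beyond substitution, at the cost of re-expanding the long identity \eqref{eq (acx)(acy)}; the paper's route isolates the single unknown linearly from one fusion-rule relation and is methodologically the same move (fusion rules applied to explicit eigenvectors) that powers Sections \ref{sec-5A}--\ref{sec-3A}. One small caution on your last step: $f$ is not in general an automorphism, so ``applying $f$ to the identity'' must be read, as in Lemma \ref{lem-t-stab}, as repeating the entire derivation with the roles of $a_0$ and $a_1$ interchanged; with that reading your argument is sound and agrees with the paper's own use of the flip.
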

        \begin{proof}
                We will derive the first equality starting from the equation
                \begin{equation}
                        a_0((a_1)_0(a_2)_\al + (a_1)_0(a_2)_0) = \al (a_1)_0(a_2)_\al
                \end{equation}
                which follows from the fusion rules.
                The key is that the contributions of $ss_2$
                from each of the terms $(a_1)_0(a_2)_\al$ and $(a_1)_0(a_2)_0$ cancel
                on the lefthand side,
                but not on the righthand side.

                We know $(a_1)_0$ from \eqref{eq b0}.
                In the same way we calculate
                \begin{equation}
                        (a_2)_0 = \frac{1}{\al}\bigl(
                                (1-\al)\lm_2-\bt)a_0
                                + (\al-\bt)\frac{1}{2}(a_2+a_{-2})
                                - s_2\bigr),
                \end{equation}
                and also get, from \eqref{eq x al alt},
                \begin{equation}
                        (a_2)_\al = \frac{1}{\al}\bigl(
                                (\bt-\lm_2)a_0
                                + \frac{1}{2}\bt(a_2+a_{-2})
                                + s_2\bigr).
                \end{equation}
                The previous lemmas are enough to calculate the products,
                so arriving at the answer is a matter of rearranging the copious terms.

                Then $ss_2^f=(ss_2)^f$.
                The third and forth products promised follow from
                \begin{equation}
                        a_0((a_2)_0(a_2)_\al + (a_2)_0(a_2)_0) = \al (a_2)_0(a_2)_\al
                \end{equation}
                using the same method.
        \end{proof}

        \begin{lemma}
                \label{lem-ult-calc}
                We can express $s_2s_2^f$ in $RB$.
        \end{lemma}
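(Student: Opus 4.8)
The product $s_2s_2^f$ is the only product between elements of $B$ not yet determined, so the plan is to pin it down by one more application of the fusion-rule technique of Lemma~\ref{lem-acbacataub}. The idea is to exhibit a $0$-eigenvector of $a_0$ that genuinely involves $s_2^f$, multiply it against the $\{0,\al\}$-part of $a_2$ (which involves $s_2$), and exploit the grading $0\star0=\{0\}$, $0\star\al=\{\al\}$ so that the unknown $s_2s_2^f$ survives on one side of an equation but cancels on the other, leaving a side that is wholly computable from the products found in the previous lemmas.

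Concretely, first I would record that $s_2^f=a_{-1}\circ a_1$ is fixed by $\tau(a_0)$, since $\tau(a_0)$ sends $a_{\pm1}\mapsto a_{\mp1}$; hence $(s_2^f)_\bt=0$ by \eqref{eq-x-bt} and $s_2^f\in A^{a_0}_{\{1,0,\al\}}$. Writing $(s_2^f)_\al$ from \eqref{eq-x-al} and reading off the expression for $a_0s_2^f$ in Lemma~\ref{lem-a(bcbtaua)}, one sees that $a_0s_2^f$ involves no $s_2^f$, so neither does $(s_2^f)_\al$; the scalar $\lm^{a_0}(s_2^f)\in R$ is determined by feeding $x=s_2^f$ into \eqref{eq aax} and dividing by the invertible $1-\al$. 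Consequently
\begin{equation}
        (s_2^f)_0 = s_2^f - (s_2^f)_1 - (s_2^f)_\al \in RB
\end{equation}
is an explicit $0$-eigenvector whose coefficient of $s_2^f$ is exactly $1$. On the other side I would use the projections $(a_2)_0,(a_2)_\al$ already computed in Lemma~\ref{lem-acbacataub}, whose sum is $(a_2)_{\{0,\al\}}=\tfrac12(a_2+a_{-2})-\lm_2a_0$ and contains no $s_2$.

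The master identity is then
\begin{equation}
        a_0\bigl((s_2^f)_0(a_2)_\al + (s_2^f)_0(a_2)_0\bigr) = \al\,(s_2^f)_0(a_2)_\al,
\end{equation}
valid by the fusion rules exactly as in Lemma~\ref{lem-acbacataub}. On the left the argument is $(s_2^f)_0\,(a_2)_{\{0,\al\}}$, a product in which $(a_2)_{\{0,\al\}}$ contributes only the axes $a_0,a_2,a_{-2}$; every resulting product $s_2^fa_i$, $s_2a_i$, $sa_i$, $a_ia_j$ was found in Lemmas~\ref{lem-a(acb)}, \ref{lem-t-stab} and~\ref{lem-a(bcbtaua)}, so the whole left-hand side lies in $RB$ and in particular carries no $s_2s_2^f$. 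On the right, $(s_2^f)_0(a_2)_\al$ pairs the $s_2^f$ of $(s_2^f)_0$ with the $s_2$ of $(a_2)_\al$, producing $s_2s_2^f$ with coefficient $\tfrac1\al$, while all its remaining products ($s_2s_2$, $ss_2$, $s_2^fa_i$, and so on) are already known. Multiplying by $\al$ gives $s_2s_2^f$ with coefficient $1$, so equating the two sides and rearranging expresses $s_2s_2^f$ in $RB$.

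Conceptually nothing new is required here; the main obstacle is simply the bulk of the polynomial algebra, since $(s_2^f)_0$ and its products against $(a_2)_\al$ and $(a_2)_0$ generate very large expressions over $R$. I would delegate the expansion, the collection of terms and the final cancellation to \cite{gap}, keeping track only of the invertibility of the denominators $\al$, $1-\al$, $\al-2\bt$ and $\al-4\bt$, all of which have already been assumed in this section.
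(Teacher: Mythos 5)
Your proof is correct, but it takes a genuinely different route from the paper's. The paper's own proof is a one-liner: $a_3$ lies in the $T$-orbit of an axis, so it is an idempotent (Lemma \ref{lem-auts-preserve-axes}), that is $a_3a_3=a_3$; Lemma \ref{lem-t-stab} writes $a_3$ in $RB$ with $s_2^f-s_2$ carrying the invertible coefficient $\frac{4}{\al-4\bt}$, so expanding $a_3a_3$ bilinearly produces the single unknown $s_2s_2^f$ with invertible coefficient $-\frac{32}{(\al-4\bt)^2}$, every other product being known from Lemmas \ref{lem-c-c}--\ref{lem-acbacataub}; equating with $a_3$ itself and solving gives the claim. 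You instead run the fusion-rule cancellation of Lemma \ref{lem-acbacataub} one more time, with $(s_2^f)_0$ in the role of $(a_1)_0$, and your supporting computations are sound: $s_2^f$ is $\tau(a_0)$-fixed, so $(s_2^f)_\bt=0$ by \eqref{eq-x-bt}; the expression for $a_0s_2^f$ in Lemma \ref{lem-a(bcbtaua)} contains no $s_2^f$, so $(s_2^f)_0$ has $s_2^f$-coefficient exactly $1$; the sum $(a_2)_0+(a_2)_\al=\frac{1}{2}(a_2+a_{-2})-\lm_2a_0$ is indeed $s_2$-free while $(a_2)_\al$ carries $s_2$ with coefficient $\frac{1}{\al}$; and every product arising other than $s_2s_2^f$ (namely $a_ia_j$, $a_is$, $a_is_2$, $a_is_2^f$, $ss_2$, $s_2s_2$) is available from Lemmas \ref{lem-a(acb)}, \ref{lem-t-stab}, \ref{lem-a(bcbtaua)} and \ref{lem-acbacataub}, so your identity isolates $s_2s_2^f$ with unit coefficient. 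The trade-off is clear: the paper's argument needs no new eigenvector and no further appeal to the fusion rules---idempotency transported by the automorphism group does all the work, at essentially zero computational cost---whereas your route requires the extra determination of $\lm^{a_0}(s_2^f)$ via \eqref{eq aax} and two considerably larger expansions; in exchange, yours is uniform with the method of Lemma \ref{lem-acbacataub} and shows that the $0\star0$ and $0\star\al$ rules alone already force the last product, independently of the explicit formula \eqref{eq-a3} for $a_3$.
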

        \begin{proof}
                Recall that idempotents are preserved by automorphisms,
                so that $a_3a_3=a_3$.
                The expression afforded in Lemma \ref{lem-t-stab},
                and knowing all other products,
                allows us to express $s_2s_2^f$.
        \end{proof}

        Altogether the previous sequence of Lemmas proves
        \begin{theorem}
                \label{thm-mt}
                Suppose that $R$ is an associative $R''[(\al-2\bt)^{-1},(\al-4\bt)^{-1}]$-algebra.
                If $A$ is an algebra over $R$
                generated by two $\Phi(\al,\bt)$-axes $a_0,a_1$,
                then $A$ is spanned by
                \begin{gather*}
                        B = \{ a_{-2},a_{-1},a_0,a_1,a_2,s,s_2,s_2^f \}.
                \end{gather*}
                with the multiplication table described by the previous Lemmas \ref{lem-c-c} to \ref{lem-ult-calc}.
                \qed
        \end{theorem}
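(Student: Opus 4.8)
The plan is to prove $A=RB$ by showing that the $R$-span $RB$ is closed under multiplication. By hypothesis $A$ is generated by $a_0,a_1$, so it is the smallest multiplicatively closed $R$-submodule containing $a_0,a_1$; since $a_0,a_1\in B\subseteq A$ we have $RB\subseteq A$, and for the reverse inclusion it suffices to know that $RB$ is a subalgebra, as it then contains the subalgebra generated by $a_0,a_1$, namely $A$. Closure in turn reduces to checking that every product $bb'$ with $b,b'\in B$ lies in $RB$; by commutativity there are only finitely many such products, falling into axis--axis products $a_ia_j$, axis--$s$ products $a_is,a_is_2,a_is_2^f$, and $s$--$s$ products $ss,ss_2,ss_2^f,s_2s_2,s_2s_2^f,s_2^fs_2^f$. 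Several are immediate: $a_0a_1,a_0a_2,a_{-1}a_1\in RB$ directly from the definitions $s=a_0\circ a_1$, $s_2=a_0\circ a_2$, $s_2^f=a_{-1}\circ a_1$ together with $x\circ y=xy-\bt(x+y)$.

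The computational engine for the remaining products is the eigenspace decomposition $x=x_1+x_0+x_\al+x_\bt$ of an arbitrary $x\in A$ with respect to the $\Phi$-axis $a_0$, combined with the projection formulas \eqref{eq-x-al} and \eqref{eq-x-bt} and the Ising fusion rules of Table \ref{tbl-ising}. The auxiliary product $\circ$ is designed so that $a\circ x$ is supported on $A^a_{\{1,0,\al\}}$, its $\bt$-part vanishing by the $\ZZ/2$-grading, while $*_a$ is built from $\circ$ so that its graded components are given explicitly by \eqref{eq star 10} and \eqref{eq star al}; the inversions of $\al-2\bt$ and $\al-4\bt$ are exactly what is needed to solve the resulting linear relations, e.g. \eqref{eq-++al} and \eqref{eq-alal}, for the unknown projections. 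Concretely, Lemma \ref{lem-a(acb)} handles $a_0s,a_1s,a_0s_2$; Lemma \ref{lem-c-c} computes $ss$; Lemma \ref{lem-a(bcbtaua)} gives $a_0s_2^f$; and Lemmas \ref{lem-acbacataub}--\ref{lem-ult-calc} supply the $s$--$s$ products $ss_2,ss_2^f,s_2s_2,s_2^fs_2^f,s_2s_2^f$.

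The structural linchpin is the $T$-invariance of Lemma \ref{lem-t-stab}. Its expression for $a_3$ arises by computing $ss$ in two ways: directly with respect to $a_0$, and with the roles of $a_0,a_1$ interchanged, i.e. applying the flip $f$. Because $s=a_0\circ a_1$ is symmetric in its arguments, $ss$ is $f$-invariant, so equating $(ss)^f=ss$ yields a relation in which $a_3=(a_{-2})^f$ appears on one side but not the other; solving for $a_3$, using the invertibility of $\bt(\al-\bt)(\al-4\bt)$, gives \eqref{eq-a3}. Once $a_3\in RB$ is known, $RB$ is stable under $f$ and under $T=\la\tau(a_0),\tau(a_1)\ra$, and every remaining axis--$s$ product can be obtained by transporting an already-computed representative along its $T,f$-orbit, as tabulated in \eqref{eq terms via t}, with the further axis--axis products $a_{-2}a_1,a_{-2}a_2,a_{-1}a_2$ following in the same way from Lemma \ref{lem-a(bcbtaua)}.

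The main obstacle is not conceptual but computational: the products $ss$, $a_0s_2^f$, $ss_2$ and especially $s_2s_2^f$ expand into very large polynomial expressions over $R$, and verifying that after re-collection each lies in $RB$ demands careful and lengthy algebra. This is precisely where the paper relies on \cite{gap} to confirm the polynomial identities. With every individual product shown to lie in $RB$, closure of $RB$ follows, hence $A=RB$, and the multiplication formulas are exactly those recorded in Lemmas \ref{lem-c-c} to \ref{lem-ult-calc}, completing the proof.
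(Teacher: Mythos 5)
Your proposal is correct and follows essentially the same route as the paper: the theorem is precisely the accumulation of Lemmas \ref{lem-a(acb)} through \ref{lem-ult-calc}, with the flip identity $(ss)^f=ss$ yielding $a_3\in RB$ and hence $T$-stability (Lemma \ref{lem-t-stab}), orbit-transport supplying the remaining axis products, fusion-rule identities giving the $s$-type products, and \cite{gap} carrying the polynomial bookkeeping. Your explicit closure argument ($RB$ is a subalgebra containing the generators, hence equals $A$) is exactly what the paper leaves implicit in its ``altogether the previous sequence of Lemmas proves'' conclusion.
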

        In particular, when $U$ is the universal $2$ generated $\Phi$-axial algebra
        over ring $R_U$ from Theorem \ref{thm-universe}
        with the extra condition that $\al-2\bt,\al-4\bt$ be invertible,
        then $R_U$ is a quotient of the polynomial ring
        \begin{equation}
                \ZZ[\sfrac{1}{2},\al,\bt,\al^{-1},\bt^{-1},(\al-\bt)^{-1},(\al-2\bt)^{-1},(\al-4\bt)^{-1}]
        \end{equation}
        and $U$ has the multiplication table of Theorem \ref{thm-mt} over $R_U$.
        (That $U$ cannot be spanned by a set of size $7$
        is clear by the existence of $\sh{6A}$,
        a quotient algebra which is $8$-dimensional over $\QQ$.)

        \gap
        We now define a form $(,)$ on the algebra of Theorem \ref{thm-mt}
        by setting $(a_i,a_i) = 1$ and applying several relations
        which are necessary for $(,)$ to be Frobenius.
        (We do not apply all such relations to guarantee that $(,)$ is Frobenius,
        since their computational complexity exceeds our capabilities.
        But applying all these relations is a valid possibility,
        which was exploited especially for Majorana alebras \cite{ipss}.)
        Firstly, $\lm^{a_i}(a_j) = \frac{(a_i,a_j)}{(a_i,a_i)} = (a_i,a_j)$.
        Secondly,
        \begin{equation}
                (a_i,s) = (a_i,a_ia_{i+1} - \bt(a_i+a_{i+1}))
                        = (a_i,a_{i+1}) - \bt(a_i,a_i) - \bt(a_i,a_{i+1})
                        = \lm_1(1-\bt) - \bt.
        \end{equation}
        We likewise compute the remaining values of the form
        up to action by $T$ and $f$.
        \begin{align}
                        (a_i,s_2) & = \lm_2(1-\bt) - \bt, \text{ for } i= -2,0,2 \\
                        (a_i,s_2) & = (\bt\lm_2 - (2\bt - 1)\lm - \bt), \text{ for } i = -1,1 \\
                        (s,s) & = \frac{1}{2}(\bt(\al - \bt)\lm_2 - 2(\al - 1)\lm^{2} + 2(\al + 2\bt^{2} - 4\bt)\lm - \bt\al + 5\bt^{2}), \\
                        \begin{gathered}(s,s_2) \\ $ $ \end{gathered} & \, \begin{aligned}
                                = {} & \frac{1}{2}\frac{(\al - 2\bt)(\al - \bt)}{(\al - 4\bt)(\al - 2\bt)(\al - \bt)}\bigl(\bt(-6\al\lm + 4\al^{2} - (2\bt + 3)\al + 4\bt^{2} + 6\bt)\lm_2 + 8(2\al - 1)\lm^{3} \\
                                & - 4(\al + 2)(2\al - 1)\lm^{2} + 2(4\al^{2} + (2\bt^{2} - 1)\al - 8\bt^{3} + 12\bt^{2} - 4\bt)\lm - 4\bt\al^{2} + \bt(6\bt + 1)\al - 20\bt^{3} + 2\bt^{2}\bigr),\end{aligned} \\
                        \begin{gathered}(s_2,s_2)
                                \\\\\\\\\\\\\\ $ $
                        \end{gathered} & \, \begin{aligned}
                                = {} & \frac{1}{2}\frac{1}{\bt(\al-4\bt)^2(\al-\bt)}\bigl(-2\bt(\al - 4\bt)(\al - \bt)(\al^{2} - (3\bt + 1)\al + 4\bt)\lm_2^{2} + (4(\al^{4} - (9\bt + 2)\al^{3} \\
                                & + (22\bt^{2} + 19\bt + 1)\al^{2} + 2\bt(8\bt^{2} - 21\bt - 4)\al - 8\bt^{3} + 16\bt^{2})\lm^{2} + 2\bt(2(2\bt - 1)\al^{3} - \bt(74\bt + 11)\al^{2} \\
                                & + 2\bt(2\bt^{2} + 47\bt + 4)\al - 32\bt^{2})\lm + 2\bt(2\bt + 1)\al^{4} + 2\bt^{2}(6\bt - 11)\al^{3} + \bt^{2}(12\bt^{2} + 88\bt - 1)\al^{2} \\
                                & + 4\bt^{3}(14\bt^{2} - 61\bt + 1)\al - 48\bt^{6} + 2^{7}\bt^{5} + 12\bt^{4})\lm_2 + 16(2\al - 1)(\al^{2} - (6\bt + 1)\al + 4\bt)\lm^{4} \\
                                & - 8(2\al - 1)(\al^{3} - (6\bt - 1)\al^{2} - (2\bt + 1)(5\bt + 2)\al + 4\bt^{3} + 6\bt^{2} + 8\bt)\lm^{3} - 4(-3\al^{4} \\
                                & + (20\bt^{2} + 17\bt + 4)\al^{3} - (8\bt^{3} - 40\bt^{2} + 6\bt + 1)\al^{2} + 2\bt(2\bt^{2} - 40\bt - 1)\al + 28\bt^{2})\lm^{2} \\
                                & + 2\bt(6(6\bt + 1)\al^{3} + (58\bt^{2} - 33\bt - 2)\al^{2} - 2\bt(2\bt^{2} + 43\bt - 2)\al + 32\bt^{2})\lm - 4\bt^{2}\al^{4} \\
                                & - 2\bt^{2}(2\bt + 1)\al^{3} - \bt^{2}(84\bt^{2} - 22\bt - 1)\al^{2} + 4\bt^{3}(34\bt^{2} + 7\bt - 1)\al - 80\bt^{6} - 12\bt^{4}\bigr),\end{aligned} \\
                        \begin{gathered}(s_2,s_2^f)
                                \\\\\\\\\\\\ $ $
                        \end{gathered} & \, \begin{aligned}
                                = {} & \frac{1}{2}\frac{1}{(\al-4\bt)^2(\al-\bt)}
                                \bigl(-2\bt\al(\al - 4\bt)(\al - \bt)\lm_2^{2} + (8(\al^{3} - 3\bt\al^{2} + \bt(8\bt + 1)\al - 4\bt^{2})\lm^{2} \\
                                & - 2((26\bt + 3)\al^{3} - (70\bt^{2} + 27\bt + 1)\al^{2} + 2\bt(46\bt^{2} + 27\bt + 3)\al - 48\bt^{3} - 8\bt^{2})\lm + 16\bt\al^{4}\\
                                & - 2\bt(22\bt + 3)\al^{3} + \bt(72\bt^{2} - 2\bt + 1)\al^{2} - 4\bt^{3}(22\bt - 7)\al + 80\bt^{5} - 32\bt^{4} - 4\bt^{3})\lm_2\\
                                & - 32\al(2\al - 1)\lm^{4} + 8(2\al - 1)(6\al^{2} - (10\bt - 3)\al + 12\bt^{2} + 2\bt)\lm^{3} - 4(8\al^{4} - 20(\bt - 1)\al^{3}\\
                                & + (24\bt^{2} - 32\bt - 15)\al^{2} + 2(26\bt^{2} + 14\bt + 1)\al - 32\bt^{2} - 4\bt)\lm^{2} + 2(16\al^{4} + 2(2\bt^{2} - 9\bt - 4)\al^{3} \\
                                & - (36\bt^{3} - 14\bt^{2} + 8\bt - 1)\al^{2} + 2\bt(48\bt^{3} - 2\bt^{2} + 27\bt + 1)\al - 64\bt^{5} + 64\bt^{4} - 64\bt^{3} - 8\bt^{2})\lm \\
                                & - 16\bt\al^{4} + 4\bt(11\bt + 2)\al^{3} - \bt(72\bt^{2} + 8\bt + 1)\al^{2} + 4\bt^{3}(22\bt - 5)\al - 80\bt^{5} + 32\bt^{4} + 4\bt^{3}\bigr).
                        \end{aligned}
        \end{align}
        By this definition,
        $(,)$ does not make the algebra Frobenius;
        for example, $(a_{-1},a_2) \neq (a_{-1},a_{-1}a_2)$.
        However, some quotients of $(,)$
        will turn out to be Frobenius in the sequel.

\section{Fusion rules and covers}
        \label{sec-fus-rule}
        
        Not all fusion rules have been enforced yet.
        We describe, given a $\Phi(\al,\bt)$-dihedral algebra,
        how to find its generalisation, called an axial cover,
        by finding smaller ideals, coming from the fusion rules,
        in the universal algebra previously described.
        We also introduce the extra assumption that the coefficient functions $\lm^e,\lm^f$
        are symmetric.

        \gap
        Suppose that $R$ is a ring satisfying the assumption in Theorem~\ref{thm-mt},
        so that $R$ is an associative algebra over
        \begin{equation}
                \tag{\ref{eq r0}}
                R_0 = \ZZ[\al,\bt,\al^{-1},\bt^{-1},(\al-\bt)^{-1},(\al-2\bt)^{-1},(\al-4\bt)^{-1}].
        \end{equation}
        Let $A_R$ be the free $R$-module on $B = \{a_{-2},a_{-1},a_0,a_1,a_2,s,s_2,s_2^f\}$
        together with the multiplication from Theorem~\ref{thm-mt}.
        Then $a_0,a_1\in A_R$ are not necessarily $\Phi(\al,\bt)$-axes,
        since their eigenvectors do not satisfy the fusion rules in general,
        as we will see in Lemma~\ref{lem-mother-rels}.
        Therefore $A_R$ is not necessarily a $\Phi(\al,\bt)$-dihedral algebra.

        However, Theorem~\ref{thm-mt} shows that any $\Phi(\al,\bt)$-dihedral algebra
        over a ring $R$
        satisfies the multiplication rules given in Section~\ref{sec-mt},
        and therefore is a quotient of $A_R$.
        In particular, Theorem~\ref{thm-universe} asserts that there exists a ring $R_U$,
        which is a quotient of $R_0$ by some ideal $J^{R_0}_{\Phi(\al,\bt)}$,
        and an algebra $U$ over $R_U$ which is the universal $\Phi(\al,\bt)$-dihedral algebra.
        Hence $U$ is a quotient of $A_{R_U}$ by some ideal $I^{R_0}_{\Phi(\al,\bt)}$.
        (We use the subscript $\Phi(\al,\bt)$ in our notation for these ideals
        to indicate that they come solely from the fusion rules.)
        While actually finding $U$ over $R_U$ is beyond our reach,
        we work with $A_{R_0}$ over $R_0$
        as an approximation to the universal object.

        Short of classifying all the $\Phi(\al,\bt)$-dihedral algebras,
        we use our results to significantly generalise the known $\Phi(\al,\bt)$-dihedral algebras
        by a pullback of ideals, as we now explain.

        Namely, suppose that $\sh{nX}_R$ is a $\Phi(\al,\bt)$-dihedral everywhere-faithful $R$-algebra,
        where $R$ is an associative algebra over $R_0$.
        Then, by Theorem~\ref{thm-universe},
        there exist matching ideals
        \begin{equation}
                J^R_{\sh{nX}} \subseteq R_U\tensor_\ZZ R, \quad
                I^R_{\sh{nX}} \subseteq U_R = (R_U\tensor_\ZZ R)U
        \end{equation}
        such that
        \begin{equation}
                R \cong (R_U\tensor_\ZZ R)/J^R_{\sh{nX}}, \quad
                \sh{nX} \cong U_R/I^R_{\sh{nX}}.
        \end{equation}
        Recall that, for the two ideals $J^R_{\sh{nX}},I^R_{\sh{nX}}$ to match,
        we must have $J^R_{\sh{nX}}U_R\subseteq I^R_{\sh{nX}}$,
        and $\lm^{a_i}(x) \in J^R_{\sh{nX}}$ for $i=0,1$ and any $x\in I^R_{\sh{nX}}$.
        Note that, if $R$ is a domain, then $J^R_{\sh{nX}}$ is a prime ideal.

        Suppose we have matching ideals
        \begin{equation}
                \begin{gathered}
                        \label{eq weak cov ideals}
                        {J'}^R_{\sh{nX}} \subseteq J^R_{\sh{nX}} \subseteq R_U\tensor_\ZZ R, \quad
                        {I'}^R_{\sh{nX}} \subseteq I^R_{\sh{nX}} \subseteq U_R = (R_U\tensor_\ZZ R)U
                \end{gathered}
        \end{equation}
        such that $\sh{nX'} = U_{(R_U\tensor_\ZZ R)/{J'}^R_{\sh{nX}}}/{I'}^R_{\sh{nX}}$
        is a $\Phi(\al,\bt)$-dihedral everywhere faithful $(R_U\tensor_\ZZ R)/{J'}^R_{\sh{nX}}$-algebra.
        We call $\sh{nX'}$ a {\em weak (axial) cover} of $\sh{nX}$.
        Note $\sh{nX}$ is a quotient of $\sh{nX'}$.
        \begin{definition}
                \label{def axcov}
                Let $\hat J^R_{\sh{nX}} = \bigcap {J'}^R_{\sh{nX}}$,
                $\hat I^R_{\sh{nX}} = \bigcap {I'}^R_{\sh{nX}}$ over all matching ideals
                ${J'}^R_{\sh{nX}},{I'}^R_{\sh{nX}}$ as above.
                The $(R_U\tensor_\ZZ R)/\hat J^R_{\sh{nX}}$-algebra $U_R/\hat I^R_{\sh{nX}}$
                is the {\em (axial) cover} of $\sh{nX}$.
                \index{axial algebra!cover}
        \end{definition}
        Note that it is possible that there are infinite descending chains of such ideals ${J'}^R_{\sh{nX}},{I'}^R_{\sh{nX}}$,
        so it is not {\em a priori} clear that their intersection, or the axial cover,
        is well-defined.

        If the ideals $\hat J^R_{\sh{nX}},\hat I^R_{\sh{nX}}$ are strictly smaller than $J^R_{\sh{nX}},I^R_{\sh{nX}}$,
        this means that $\sh{nX}$ is subject to additional constraints
        other than those coming from the fusion rules.
        Then $\sh{nX}$ is a proper quotient of its axial cover,
        which is its largest generalisation as an axial algebra.
        Our specific application will be to the Norton-Sakuma algebras $\sh{nX}$,
        listed in Table~\ref{tbl-NS}.

        Since $R_U$ and therefore $U_R$ are not available to work with,
        we will use our approximation $A_{R_0}$ as follows.
        We still consider a fixed $\Phi(\al,\bt)$-dihedral algebra $\sh{nX}$
        over an everywhere faithful ring $R$ which is an associative $R_0$-algebra.
        Then
        \begin{gather}
                A_{R_0\tensor_\ZZ R}
                \xrightarrow{I^R_{\Phi(\al,\bt)}} U_R
                \xrightarrow{I^R_{\sh{nX}}} \sh{nX}, \\
                {R_0\tensor_\ZZ R}
                \xrightarrow{J^R_{\Phi(\al,\bt)}} (R_U\tensor_\ZZ R)/J^R_{\Phi(\al,\bt)}
                \xrightarrow{J^R_{\sh{nX}}} R
        \end{gather}
        shows, in the top line, the relation among the algebras,
        and in the bottom line the relation among their rings.
        Instead of finding
        \begin{equation}
                \hat I^R_{\sh{nX}}\subseteq U_R \text{ and }
                \hat J^R_{\sh{nX}}\subseteq(R_U\tensor_\ZZ R),
        \end{equation}
        we will try to find ideals
        \begin{equation}
                \bar I^R_{\sh{nX}}\subseteq A_R \text{ and }
                \bar J^R_{\sh{nX}}\subseteq(R_0\tensor_\ZZ R)
        \end{equation}
        such that
        \begin{equation}
                \bar I^R_{\sh{nX}}/I^R_{\Phi(\al,\bt)} = \hat I^R_{\sh{nX}} \text{ and }
                \bar J^R_{\sh{nX}}/J^R_{\Phi(\al,\bt)} = \hat J^R_{\sh{nX}}.
        \end{equation}
        Notice that $A_R/\bar I^R_{\sh{nX}}$ as an $(R_0\tensor_\ZZ R)/\bar J^R_{\sh{nX}}$-algebra
        is exactly the axial cover of $\sh{nX}$.

        As it turns out, a subset of the fusion rules
        will be sufficient to generate $\bar I^R_{\sh{nX}}$ and $\bar J^R_{\sh{nX}}$,
        which will considerably shorten our work.
        In practice, we can calculate $\bar J^R_{\sh{nX}}$ as follows.
        As the cover is everywhere faithful over its ring $(R_0\tensor_\ZZ R)/\bar J^R_{\sh{nX}}$,
        the ring is a domain and $\bar J^R_{\sh{nX}}$ must be a prime ideal.
        Thus we have
        \begin{lemma}
                \label{lem cover smallest factor}
                For any $p\in J^R_{\Phi(\al,\bt)}$,
                let $p_{\sh{nX}}$ be the smallest factor of $p$ contained in $\bar J^R_{\sh{nX}}$.
                Then $\bar J^R_{\sh{nX}} = (p_{\sh{nX}}\mid p\in J^R_{\Phi(\al,\bt)})$.
                \qed
        \end{lemma}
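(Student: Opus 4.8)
The plan is to prove the asserted equality by the two inclusions between $\bar J^R_{\sh{nX}}$ and $K := (p_{\sh{nX}}\mid p\in J^R_{\Phi(\al,\bt)})$, the one essential structural input being the fact just noted above: the cover is everywhere faithful over $(R_0\tensor_\ZZ R)/\bar J^R_{\sh{nX}}$, so this ring is a domain and $\bar J^R_{\sh{nX}}$ is a prime ideal of the ambient ring $R_0\tensor_\ZZ R$, which in our applications is a localisation of a polynomial ring over $\QQ$ and hence a unique factorisation domain. First I record the easy facts. Because the cover is $\Phi(\al,\bt)$-dihedral it satisfies the fusion rules, so $J^R_{\Phi(\al,\bt)}\subseteq\bar J^R_{\sh{nX}}$; combined with primeness this guarantees that each $p\in J^R_{\Phi(\al,\bt)}$ has an irreducible factor lying in $\bar J^R_{\sh{nX}}$, and passing to a factor of least degree (which still lies in the prime) well-defines $p_{\sh{nX}}$ as an irreducible divisor of $p$ belonging to $\bar J^R_{\sh{nX}}$. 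Thus $K\subseteq\bar J^R_{\sh{nX}}$ at once. Conversely, since $p_{\sh{nX}}\mid p$ and $p_{\sh{nX}}\in K$, the ideal $K$ contains every $p$, so $J^R_{\Phi(\al,\bt)}\subseteq K\subseteq\bar J^R_{\sh{nX}}$.

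The substance is the reverse inclusion $\bar J^R_{\sh{nX}}\subseteq K$, and here I would invoke that $\bar J^R_{\sh{nX}}$ is the ideal of the cover, i.e. the minimal prime over $J^R_{\Phi(\al,\bt)}$ cutting out the irreducible component of the fusion-rule variety through the point of $\sh{nX}$. In each case treated in Sections \ref{sec-5A} to \ref{sec-3A} this component is a hypersurface in the parameter space (a single relation, as displayed in Table \ref{tbl-covers}), so $\bar J^R_{\sh{nX}}$ is a height-one prime and therefore principal: $\bar J^R_{\sh{nX}}=(q)$ for an irreducible $q$. Then for every $p\in J^R_{\Phi(\al,\bt)}\subseteq(q)$ we have $q\mid p$, and the least-degree irreducible factor of $p$ contained in $(q)$ is $q$ itself; hence $p_{\sh{nX}}=q$ up to a unit for every $p$, giving $K=(q)=\bar J^R_{\sh{nX}}$ and closing the argument.

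The main obstacle is precisely this last step. A priori a minimal prime over $J^R_{\Phi(\al,\bt)}$ need not be principal, and for a height-$\geq 2$ prime the extracted factors can fail to generate all of $\bar J^R_{\sh{nX}}$ (indeed ``smallest factor'' can become genuinely ambiguous). What rescues the recipe is that every cover we produce is defined by a single parameter relation, so $\bar J^R_{\sh{nX}}$ is height one; this principality is to be verified case by case from the explicit generators of $J^R_{\Phi(\al,\bt)}$ supplied by the fusion-rule relations of Lemma \ref{lem-mother-rels}. Once it is in hand, the computation of $\bar J^R_{\sh{nX}}$ reduces to the mechanical task of factoring each $p$ in the UFD and discarding the factors that do not vanish on the cover, which is exactly the practical content the lemma is designed to license.
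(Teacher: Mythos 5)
Your first half is fine, and it is exactly the justification the paper itself records: everywhere-faithfulness of the cover makes $(R_0\tensor_\ZZ R)/\bar J^R_{\sh{nX}}$ a domain, so $\bar J^R_{\sh{nX}}$ is prime, contains $J^R_{\Phi(\al,\bt)}$, hence contains an irreducible factor of each $p$, which well-defines $p_{\sh{nX}}$ and gives $K=(p_{\sh{nX}}\mid p\in J^R_{\Phi(\al,\bt)})\subseteq\bar J^R_{\sh{nX}}$. The genuine gap is your reverse inclusion, which rests entirely on the claim that $\bar J^R_{\sh{nX}}$ is a height-one prime of a UFD, hence principal. That claim is false in this setting. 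The ambient ring \eqref{eq r0} contains not just $\al,\bt$ but also the indeterminates $\lm_1,\lm_2$ (and their flips), and each cover imposes several independent relations, not one: from the parameter column of Table \ref{tbl-covers}, $\sh{4A_\bt}$ requires $\al-\sfrac{1}{4}$, $\lm_1-\bt$ and $\lm_2$; $\sh{5A_\al}$ requires $8\bt-(5\al-1)$ and $32\lm-3(5\al-1)$; similarly for $\sh{4B_\al}$ and $\sh{6A_\al}$. The cover varieties are thus of codimension at least two in parameter space; the plane curves of Figure \ref{fig-plot} are only their projections to the $(\al,\bt)$-plane, and Table \ref{tbl-covers} is not ``a single relation per algebra''. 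Concretely: a principal prime $(q)$ with $q$ irreducible contains an irreducible element only if it is an associate of $q$, whereas the factors extracted in Sections \ref{sec-5A} to \ref{sec-3A} are pairwise non-associate irreducibles (one pins down $\bt$, another $\lm_1$, another $\lm_2$). So $\bar J^R_{\sh{nX}}$ is never principal for these algebras, your final step collapses, and the case-by-case verification you propose as a rescue would in fact refute your hypothesis.

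What actually closes the argument is not the internal commutative algebra of $\bar J^R_{\sh{nX}}$ but Definition \ref{def axcov}: the cover ideal is (the preimage of) the intersection of all weak-cover ideals. Hence to prove $\bar J^R_{\sh{nX}}\subseteq K$ one must exhibit $K$, together with its matching algebra ideal, as a weak-cover ideal, i.e.\ show that the quotient by $K$ is an everywhere-faithful $\Phi(\al,\bt)$-dihedral algebra having $\sh{nX}$ as a quotient; minimality of the intersection then forces the inclusion. This verification is precisely what the case-by-case results (Theorems \ref{thm-5A}, \ref{thm-6A}, \ref{thm-4B}, \ref{thm-4A}, \ref{thm-3A}, via the fusion-rule checks for the quotient algebras) supply. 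The paper itself records only the primeness observation before its \emph{qed}, leaving this second half implicit; but any complete proof has to route through that weak-cover/minimality mechanism --- or some substitute for it --- and cannot be extracted from a (false) principality statement.
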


        Note that if $\sh{nX}$ is a $\Phi(\bar\al,\bar\bt)$-axial algebra
        then $\al-\bar\al,\bt-\bar\bt\in\hat J^R_{\sh{nX}}$.
        Therefore if $p(\al)$ is an irreducible polynomial in $\bar J^R_{\sh{nX}}\subseteq\hat J^R_{\sh{nX}}$
        and coprime to $\al-\bar\al$
        then the ideal $(p(\al)) + (\al-\bar\al)$
        is equal to $(1)$, the entire ring,
        which implies $\hat J^R_{\sh{nX}} = (1)$
        and ${\sh{nX}}$ is the trivial algebra.
        The same argument applies with respect to $\bt$.
        This is a useful restriction on relations inside $\hat J^R_{\sh{nX}}$.

        \gap
        We now provide the eigenvectors and fusion rules
        used to find the ideals $\bar I^R_{\sh{nX}},\bar J^R_{\sh{nX}}$.
        \begin{lemma}
                \label{lem-eigvect-ising}
                In the algebra $A_R$ (coming from Theorem~\ref{thm-mt}),
                \begin{align}
                        \begin{aligned}
                                & A^{a_0}_1 = \la {a_0}\ra. \\
                                & \!\begin{aligned} A^{a_0}_0 & \text{ contains } 
                                        z = ((1-\al)\lm_1-\bt)a_0 + \frac{1}{2}(\al-\bt)(a_1+a_{-1}) - s, \\
                                        & \quad \begin{aligned}
& zz = -\frac{1}{4}\frac{1}{\al-2\bt}\Bigl(
 \bt\al(\al-\bt)^{2} (a_{-2} + a_2) \\
& \quad\quad + \al(\al-\bt)(2\lm_1^f - \al - 4\bt\lm_1 + 4\bt^{2} - 2\al\lm_1^f + 2\al\bt) (a_{-1} + a_1) \\
& \quad\quad + 2\al(-2\lm_1\lm_1^f + 2\bt\lm_1^f + 2\bt\lm_1 - 2\bt^{2} + \bt^{2}\lm_2 + 4\bt^{2}\lm_1 - 3\bt^{3} + 2\al\lm_1\lm_1^f + 2\al\lm_1^{2} \\
& \quad\quad\quad\quad\quad - \al\bt\lm_2 - 2\al\bt\lm_1^f - 4\al\bt\lm_1 - 2\al^{2}\lm_1 +3\al^{2}\bt - 4\al\bt\lm_1^{2} + 2\al^{2}\lm_1^{2}) a_0 \\
& \quad\quad + 4\al(\lm_1^f - \bt + 2\bt\lm_1 - \al\lm_1^f - 2\al\lm_1 - 2\al\bt + 2\al^{2}) s \\
& \quad\quad + 2\bt\al(\al-\bt) s_2 - 2\al(\al-\bt)(\al-2\bt) s_2^f
                                        \Bigr), \text{ and}\end{aligned} \\
                                        & \quad z_2 = ((1-\al)\lm_2-\bt)a + \frac{1}{2}(\al-\bt)(a_2+a_{-2}) - s_2
                                . \end{aligned} \\
                                & \!\begin{aligned} A^a_\al \text{ contains } 
                                        & x = (\bt-\lm_1)a_0 + \frac{1}{2}\bt(a_1 + a_{-1}) + s, \quad 
                                         x_2 = (\bt-\lm_2)a_0 + \frac{1}{2}\bt(a_2 + a_{-2}) + s_2
                                . \end{aligned} \\
                                & A^a_\bt \text{ contains } 
                                        y = a_1 - a_{-1},\quad
                                        y_2 = a_2 - a_{-2}
                                .
                        \raisetag{\baselineskip}
                        \end{aligned}
                \end{align}
        \end{lemma}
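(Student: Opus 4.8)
The plan is to observe that, apart from the $1$-eigenspace, every vector listed is a scalar multiple of an eigenspace projection of one of the generators, so that the asserted eigenvalue can simply be read off. For the $1$-eigenspace, $A^{a_0}_1 = \langle a_0\rangle$ is exactly the primitivity of $a_0$; this is encoded in the structure constants of Theorem~\ref{thm-mt} (the coefficient functions $\lm^{a_0}$ were defined using it), so I would only need to note that the $1$-eigenspace of $\ad(a_0)$ on the free module $A_R$ remains one-dimensional.

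For the rest, equations~\eqref{eq b0}, \eqref{eq x al alt} and~\eqref{eq-x-bt} give the projections $(a_1)_0$, $(a_1)_\al$ and $(a_1)_\bt$ of $a_1$, while the proof of Lemma~\ref{lem-acbacataub} records the matching projections $(a_2)_0,(a_2)_\al,(a_2)_\bt$ of $a_2$. Comparing coefficients yields $z=\al(a_1)_0$, $x=\al(a_1)_\al$, $y=2(a_1)_\bt$ and, identically, $z_2=\al(a_2)_0$, $x_2=\al(a_2)_\al$, $y_2=2(a_2)_\bt$. Since the projection of any element onto $A^{a_0}_\mu$ is a $\mu$-eigenvector of $\ad(a_0)$, this places $z,z_2$ in $A^{a_0}_0$, $x,x_2$ in $A^{a_0}_\al$, and $y,y_2$ in $A^{a_0}_\bt$.

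If one prefers a verification that does not presuppose the eigenspace decomposition, I would instead evaluate $a_0v$ directly from the multiplication table. The case of $y$ is quickest: $a_0a_1 = s+\bt(a_0+a_1)$ by the definition of $s$, and applying $\tau(a_0)$ --- which fixes $s$ and interchanges $a_1$ with $a_{-1}$, since each axis is fixed by its own Miyamoto involution --- gives $a_0a_{-1}=s+\bt(a_0+a_{-1})$, whence $a_0y=\bt y$; the identity $a_0y_2=\bt y_2$ is the same with $a_2,a_{-2},s_2$. For $z$ and $x$ I would expand $a_0z$ and $a_0x$ using the value of $a_0s$ from Lemma~\ref{lem-a(acb)} together with $a_0a_{\pm1}$, and collect to obtain $0$ and $\al x$ respectively; the subscript-$2$ versions use $a_0s_2$ in place of $a_0s$.

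It remains to establish the explicit formula for $zz$. Here I would use $z=\al(a_1)_0$, so that $zz=\al^2\,(a_1)_0(a_1)_0$, and invoke the expansion of $(a_1)_0(a_1)_0$ over the spanning set $B$ already carried out inside the proof of Lemma~\ref{lem-a(bcbtaua)}; multiplying that expression by $\al^2$ and collecting over $R_0$ produces the stated identity. This last step is the only real obstacle, and it is purely computational: $(a_1)_0(a_1)_0$ depends on the long expression for $ss$ in Lemma~\ref{lem-c-c} and on products such as $a_1s,a_{-1}s,ss_2$ that are fixed by the $T$-action of Lemmas~\ref{lem-t-stab}--\ref{lem-ult-calc}, and the polynomial bookkeeping over $R_0$ is large enough that I would defer it to \cite{gap}, as elsewhere in this section.
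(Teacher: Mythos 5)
Your proposal is correct, and its operative content is exactly the paper's own (very terse) proof: the paper disposes of the lemma by noting that, with the multiplication table in hand, checking that the listed vectors are eigenvectors is a routine calculation, and that $A^{a_0}_1=\la a_0\ra$ is precisely primitivity. What you add is the identification $z=\al(a_1)_0$, $x=\al(a_1)_\al$, $y=2(a_1)_\bt$, and $z_2=\al(a_2)_0$, $x_2=\al(a_2)_\al$, $y_2=2(a_2)_\bt$, via \eqref{eq b0}, \eqref{eq x al alt}, \eqref{eq-x-bt} and the proof of Lemma~\ref{lem-acbacataub}; this is indeed where the formulas come from and is a more conceptual framing. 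But note its logical status carefully: the lemma is a statement about $A_R$, the \emph{free} module on $B$ with the formal multiplication table, and in $A_R$ the elements $a_0,a_1$ are not known to be axes --- Section~\ref{sec-fus-rule} stresses, and Lemma~\ref{lem-mother-rels} exhibits, that the fusion rules can fail there --- so there is no semisimple decomposition and hence no projection operators to invoke; the equations \eqref{eq b0} and \eqref{eq x al alt} were derived under the hypothesis of a genuine dihedral algebra. Consequently your ``alternative'' direct verification is not optional: it \emph{is} the proof. Since you supply it, the argument is complete; and it does go through, e.g.\ expanding $a_0z$ with $a_0a_{\pm1}=s+\bt(a_0+a_{\pm1})$ and Lemma~\ref{lem-a(acb)}, the coefficients of $s$ and of $a_1+a_{-1}$ cancel and the coefficient of $a_0$ collects to $((1-\al)\lm_1-\bt)+\bt(\al-\bt)-\lm_1(1-\al)-\bt(\al-\bt-1)=0$.

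One warning about your route to $zz$: multiplying the displayed expansion of $(a_1)_0(a_1)_0$ in the proof of Lemma~\ref{lem-a(bcbtaua)} by $\al^2$ does \emph{not} reproduce the stated formula as printed --- the two displays disagree in the coefficient of $a_2+a_{-2}$ by a factor $-(\al-\bt)$, while all other coefficients agree. A direct computation from the table (the only contributions to $a_{\pm2}$ in $zz$ come from $-(\al-\bt)(a_1s+a_{-1}s)$ and from $ss$) gives $-\al\bt(\al-\bt)^2/(4(\al-2\bt))$ for that coefficient, which confirms the formula stated in the present lemma and locates a typo in the display you propose to cite. So your closing instinct --- recompute the bookkeeping in \cite{gap} rather than trust the intermediate display --- is not just convenient but necessary.
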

        \begin{proof}
                Since we have the multiplication table,
                it is a routine calculation to check that the vectors listed
                are eigenvectors of the appropriate eigenvalues.
                That $a_0$ spans $A^{a_0}_1$ comes from the assumption of primitivity.
        \end{proof}
        Similarly, eigenvectors of $a_1$ can be easily calculated
        by interchanging the r\^oles of $a_0$ and $a_1$ in the above.

        \begin{lemma}
                \label{lem-mother-rels}
                The coefficient of $a_{-1}$ in $a_0(z_2z_2)$ is
                \begin{equation}
                        \label{eq-mother}
                        \begin{aligned}
                        \frac{-2}{\bt(\al-4\bt)^3} &
                        \cdot ((\al-1)\lm_1+\bt(2\bt-2\al+1)) \\
                        & \cdot (2(\al(\al-1)-6\al\bt+4\bt)\lm_1+(10\bt+1)\al\bt-2\bt^2(2+3\bt)) \\
                        & \cdot (4(2\al-1)\lm_1^2-2(\al^2+6\bt\al-4\bt)\lm_1-\al\bt\lm_2+2\al^2\bt+4(\al-1)\bt^2).
                        \end{aligned}
                \end{equation}
                The other nonzero coefficients are those of $a_{-2},a_0,a_1,a_2$.
        \end{lemma}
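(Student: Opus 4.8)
My plan is to prove Lemma~\ref{lem-mother-rels} by an explicit computation inside the free module $A_R$, using the complete multiplication table assembled in Theorem~\ref{thm-mt}, that is, Lemmas~\ref{lem-c-c} through~\ref{lem-ult-calc}. First I would take the explicit element
\[
        z_2 = ((1-\al)\lm_2-\bt)a_0 + \tfrac{1}{2}(\al-\bt)(a_2+a_{-2}) - s_2
\]
from Lemma~\ref{lem-eigvect-ising} and expand $z_2z_2$ bilinearly. This needs only the products among $a_0,a_2,a_{-2},s_2$, namely $a_0a_0=a_0$, $a_0a_2$, $a_0a_{-2}$, $a_0s_2$, $a_2a_{-2}$, $a_2s_2$, $a_{-2}s_2$ and $s_2s_2$, every one of which is recorded in Lemmas~\ref{lem-a(acb)} to~\ref{lem-acbacataub}, so $z_2z_2$ becomes an explicit $R$-combination $\sum_i c_ib_i$ of the spanning set $B$. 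Multiplying through by $a_0$ then uses only the products $a_0b$ for $b\in B$, all of which are available, and collecting in $B$ produces $a_0(z_2z_2)=\sum_i c_i(a_0b_i)$.

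Before grinding out the coefficients, two observations organise the work and explain the shape of the answer. First, inspection of the table shows that every product $a_0b$ with $b\in B$ has vanishing $s_2^f$-coefficient --- for instance the formula for $a_0s_2^f$ in Lemma~\ref{lem-a(bcbtaua)} involves only $a_0$, $a_{\pm1}$, $a_{\pm2}$, $s$ and $s_2$, and the same holds for the other $a_0b$ --- so the $s_2^f$-coefficient of $a_0(z_2z_2)$ is automatically zero. Second, I would verify directly from the expansion that the $s$- and $s_2$-coefficients vanish as well; together this confines the support of $a_0(z_2z_2)$ to $a_{-2},a_{-1},a_0,a_1,a_2$, as the statement asserts. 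It is precisely the nonvanishing of this element that records the failure of the fusion rule $0\star0=\{0\}$ for the $0$-eigenvector $z_2$: in contrast to $zz$, which by Lemma~\ref{lem-eigvect-ising} remains inside $A^{a_0}_0$, the square $z_2z_2$ escapes it, and the surviving coefficients measure by how much.

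The substantive step is to extract the $a_{-1}$-coefficient and to display it in the factored form~\eqref{eq-mother}. Conceptually this is routine, but the intermediate polynomials in $\al,\bt,\lm_1,\lm_1^f$ and $\lm_2$ are bulky, and the clean splitting into three irreducible factors only becomes visible once the whole coefficient has been assembled and then factored; I do not see a way to force this factorisation structurally in advance. The main obstacle is thus the size of the polynomial manipulation rather than any conceptual difficulty, and, as elsewhere in the paper, I would carry out and check it with \cite{gap} before factoring the output. Each of the three resulting factors is a candidate relation, and it is these factors that will subsequently cut out the ideals governing the covers of the Norton--Sakuma algebras.
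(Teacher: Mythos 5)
Your proposal is correct and matches the paper's own proof, which simply states that \eqref{eq-mother} is established ``by direct computation using the results of Section~\ref{sec-mt}'' (with \cite{gap} handling the polynomial bulk, exactly as you propose), the well-definedness of the coefficients coming from working over an everywhere faithful ring, i.e.\ in the free module on $B$. Your additional structural observations --- that no product $a_0b$, $b\in B$, has an $s_2^f$-component, and that the vanishing of the $s$- and $s_2$-coefficients must be checked in the expansion --- are accurate refinements the paper leaves implicit.
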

        \begin{proof}
                We establish the formula by direct computation using the results of Section~\ref{sec-mt}.
                That the coefficient is unique
                follows by our assumption that we are working over an everywhere faithful ring.
        \end{proof}

        \gap
        Finally, to simplify our calculations,
        we introduce an assumption on the coefficients.

        {\em We will from now on}, for simplicity of calculation,
        assume that $\lm_1 = \lm_1^f$ and $\lm_2 = \lm_2^f$.
        This assumption is realised in at least three different situations:

        \begin{lemma}
                \label{lem-odd-quots}
                Suppose that $a_0 = a_{2n+1}$ for some $n$.
                Then $\lm^{a_i}(a_j) = \lm^{a_j}(a_i)$ for all $i,j$,
                                so that $\lm_i = \lm_i^f$,
                and $s_2 = s_2^f$.
        \end{lemma}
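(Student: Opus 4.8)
The plan is to exploit the dihedral symmetry that arises when the axes ``wrap around'' after an odd number of steps. The hypothesis $a_0 = a_{2n+1}$ means that in the dihedral group $T = \la\tau(a_0),\tau(a_1)\ra$ acting on the indexed axes, the element $\rho^{-n}$ (together with the flip) identifies $a_0$ with $a_1$. Concretely, recall from Section~\ref{sec-prep} that $a_{2i} = a_0^{\rho^i}$ and $a_{2i+1} = a_1^{\rho^i}$; thus $a_0 = a_{2n+1} = a_1^{\rho^n}$, so conjugation by $\rho^n$ carries $a_1$ to $a_0$. The first thing I would establish is that this same symmetry, realised by an automorphism $g\in T\leq\Aut(A)$, swaps the two generators and hence interchanges the entire indexing of the $\Phi$-axes.

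Once $g\in\Aut(A)$ with $a_0^g = a_1$ (and, by the dihedral relations, $a_1^g = a_0$, or more precisely $g$ reverses the indexing appropriately) is in hand, the coefficient identities follow from Lemma~\ref{lem-auts-preserve-axes}. That lemma states precisely that for any $t\in\Aut(A)$ and $x\in A$, one has $\lm^a(x) = \lm^{a^t}(x^t)$. Applying this with $t = g$, $a = a_i$, and $x = a_j$ gives
\begin{equation*}
        \lm^{a_i}(a_j) = \lm^{a_i^g}(a_j^g) = \lm^{a_j}(a_i),
\end{equation*}
where the last equality uses that $g$ sends $a_i \mapsto a_j$ and $a_j\mapsto a_i$ for the relevant pairs (in particular the pair $a_0,a_1$ giving $\lm_1 = \lm_1^f$, and the pair $a_0,a_2$ versus $a_1,a_{-1}$ giving $\lm_2 = \lm_2^f$). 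So the symmetry of the coefficient function $\lm$ is a direct consequence of the existence of the index-reversing automorphism.

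For the claim $s_2 = s_2^f$, I would argue directly from the definitions $s_2 = a_0\circ a_2$ and $s_2^f = a_{-1}\circ a_1$ together with the action of $g$. Since $\circ$ is built from the commutative algebra multiplication and scalar operations fixed by automorphisms ($x\circ y = xy - \bt(x+y)$ and $g$ fixes $\bt\in R$), $g$ is compatible with $\circ$; hence $s_2^g = (a_0\circ a_2)^g = a_0^g\circ a_2^g$, and I would check using the indexing that the image is exactly $s_2^f$ (or that $g$ fixes the pair $\{s_2,s_2^f\}$ by swapping them, forcing equality once the $\lm$ symmetry collapses the distinction). The main obstacle I anticipate is bookkeeping: correctly tracking which power of $\rho$ and which flip realises the identification $a_0 = a_{2n+1}$, and verifying that the induced permutation of the index set genuinely matches each generator with the partner appearing in the superscript-$f$ definitions, rather than some other axis. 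Getting the indices right is the crux; once the automorphism is correctly identified, every equality is a one-line application of Lemma~\ref{lem-auts-preserve-axes}.
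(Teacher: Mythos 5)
Your overall strategy --- produce an automorphism $g\in T$ swapping $a_0$ and $a_1$, then read off the $\lm$-identities from Lemma \ref{lem-auts-preserve-axes} --- is the same as the paper's, but your execution has two genuine gaps. First, the element you name does not do the job: $\rho^n$ sends $a_1\mapsto a_{2n+1}=a_0$, but it sends $a_0\mapsto a_{2n}=a_{-1}$, not to $a_1$; it is a rotation, and since the period $2n+1$ is odd no rotation in $T$ can interchange $a_0$ and $a_1$ (that would force $2\equiv0\pmod{2n+1}$). The interchange is realised only by a reflection, namely $\tau(a_{n+1})$, which acts on indices by $i\mapsto 2n+2-i\equiv 1-i$ and hence swaps $a_0\leftrightarrow a_1$ and $\{a_0,a_2\}\leftrightarrow\{a_1,a_{-1}\}$; this is the element the paper uses. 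Your hedge ``together with the flip'' cannot rescue $\rho^n$, because the flip $f$ is not known to be an automorphism --- the symmetry you are trying to prove is essentially what would make it one.

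Second, and more seriously, your argument for $s_2=s_2^f$ is incomplete. With the correct $g=\tau(a_{n+1})$ you do get $s_2^g=a_0^g\circ a_2^g=a_1\circ a_{-1}=s_2^f$, but an automorphism swapping two elements does not make them equal, and your fallback (``forcing equality once the $\lm$ symmetry collapses the distinction'') is not a valid deduction: $\lm_2=\lm_2^f$ is an identity between ring coefficients and says nothing about the algebra elements $s_2,s_2^f$. The missing idea in the paper's proof is that \emph{every} element of $T$ fixes $s_2$: $\tau(a_0)$ fixes it because $s_2=a_0\circ a_2$ lies in $A^{a_0}_{\{1,0,\al\}}$, and $\tau(a_1)$ fixes it because it swaps $a_0$ and $a_2$ while $\circ$ is commutative. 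Since $\tau(a_{n+1})\in T=\la\tau(a_0),\tau(a_1)\ra$, it must fix $s_2$ on the one hand, while sending $s_2$ to $s_2^f$ on the other, and only from this conjunction does $s_2=s_2^f$ follow. Without that observation, the swap alone gives you nothing.
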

        \begin{proof}
                The group $T = \la\tau(a_0),\tau(a_1)\ra$
                acts transitively on pairs $\{a_i,a_j\}$ with $i-j\equiv_20$
                and on pairs $\{a_i,a_j\}$ with $i-j\equiv_21$.
                If $a_0 = a_{2n+1}$,
                then $\tau(a_{n+1})\in T$ swaps $a_0 = a$ and $a_1 = b$,
                and $\{a_0,a_2\}$ is swapped with $\{a_1,a_{-1}\}$.
                As $\tau(a_{n+1})$ also swaps the respective $1$-eigenspaces,
                $\lm^{a_i}(a_j) = \lm^{a_i}(a_j)^{\tau(a_{n+1})} = \lm^{a_j}(a_i)$.
                Also $\tau(a_{n+1})$ interchanges $s_2,s_2^f$,
                but $T$ is generated by $\tau(a_0),\tau(a_1)$ which both fix $s_2$,
                so $\tau(a_{n+1})$ must fix $s_2$ and therefore $s_2 = s_2^f$.
        \end{proof}

        Secondly, if the algebra is Frobenius,
        then $\lm^a(b) = \frac{(a,b)}{(a,a)}a$ for any primitive nonsingular $a$;
        in particular, if $(a_0,a_0) = (a_1,a_1)$
        then $\lm_1 = \lm_1^f$ and furthermore,
        by the same argument as Lemma 7.4 in \cite{hrs-sak},
        $\lm_2 = \lm_2^f$.
        (In fact, it can be shown that $\lm^{a_i}(a_j)$ depends only on $\size{i-j}$.)

        Finally, it can be shown that, if $a$ is a $\Phi(\al)$-axis
        and $b$ is a $\Phi(\bt)$-axis with $ab\neq 0$, then $\bt=1-\al$,
        $b' = \id-b$ is a $\Phi(\al)$-axis,
        $a$ and $b'$ generate the algebra
        and $\lm^a(b') = \lm^{b'}(a)$.
        In other words,
        the property $\lm^a(b) = \lm^b(a)$ follows from the axioms
        in $\Phi(\al)$-dihedral algebras if one allows the swap $b'$ for $b$.

        We are not sure if similarly $\lm^a(b) = \lm^b(a)$
        is a consequence of the axioms for Ising-axial algebras,
        but no counterexamples are known.

        In the following sections,
        we find the covers of the Norton-Sakuma algebras $\sh{nX}$ over $\QQ$,
        described below,
        for $\sh{nX}$ one of $\sh{4A},\sh{4B},\sh{5A},\sh{6A}$.
        The cover for $\sh{3A}$ is more complicated to determine,
        so we instead find a {\em weak cover}:
        we calculate ideals $\bar I_{\sh{3A}}\subseteq\bar I'_{\sh{3A}}\subseteq\hat I_{\sh{3A}}$
        and        $\bar J_{\sh{3A}}\subseteq\bar J'_{\sh{3A}}\subseteq\hat J_{\sh{3A}}$
        which conjecturally coincide with $\bar I_{\sh{3A}}$ and $\bar J_{\sh{3A}}$.

        The Norton-Sakuma algebras $\sh{nX}$
        are given by Table \ref{tbl-NS}
        together with the formulas
        \begin{gather}
                a_i = a_{i \bmod n},\quad
                a_ia_{i+1} = \frac{1}{64}(a_i+a_{i+1})+s, \quad
                a_ia_{i+2} = \frac{1}{64}(a_i+a_{i+2})
                        + \begin{cases}
                        s_2 & \text{ if } i\equiv_2 0, \\
                        s_2^f & \text{ if } i\equiv_2 1.
                \end{cases}
        \end{gather}
        Under the isomorphism type $\sh{nX}$,
        we give a spanning set and notes;
        the other column contains all the products
        necessary to calculate in the algebra.
        
                \begin{table}
                \begin{center}
        \begin{tabular}{ll}
                \hline
                Description & Products \\
                \hline
                \specialcell[1.2]{l}{
                        $\sh{1A}$ \\
                        $a_0$
                } & \specialcell[1.2]{l}{
                        $a_0a_0 = a_0$ \\
                        $(a_0,a_0) = 1$
                }\\\hline
                \specialcell[1.2]{l}{
                        $\sh{2B}$ \\
                        $a_0,a_1$
                } & \specialcell[1.2]{l}{
                        $a_0a_1 = 0$ \\
                        $(a_0,a_1) = 0$
                }\\\hline
                \specialcell[1.2]{l}{
                        $\sh{2A}$ \\
                        $a_0,a_1,a_2$
                } & \specialcell[1.2]{l}{
                        $a_ia_j = \frac{1}{8}(a_i+a_j-a_k)$ \\
                        $(a_i,a_j) = \frac{1}{8}$
                }\\\hline
                \specialcell[1.2]{l}{
                        $\sh{3C}$ \\
                        $a_0,a_1,a_2$
                } & \specialcell[1.2]{l}{
                        $a_ia_j = \frac{1}{64}(a_i+a_j-a_k)$ \\
                        $(a_i,a_j) = \frac{1}{64}$
                }\\\hline
                \specialcell[1.2]{l}{
                        $\sh{3A}$ \\
                        $a_0,a_1,a_2,s$ \\
                        $ $
                } & \specialcell[1.2]{l}{
                        $a_is = \frac{7}{2^{11}}(4a_i+a_j+a_k) + \frac{7}{32}s$ \\
                        $ss = \frac{147}{2^{16}}(a_i+a_j+a_k) - \frac{63}{2^{11}}s$ \\
                        $(a_i,a_j) = \frac{13}{256}$
                }\\\hline
                \specialcell[1.2]{l}{
                        $\sh{4A}$ \\
                        $a_{-1},a_0,a_1,a_2,s$ \\
                        $\la a_i,a_{i+2}\ra\cong\sh{2B}$
                } & \specialcell[1.2]{l}{
                        $a_is = \frac{1}{2^{11}}(7(a_{i+1}+a_{i-1})-2a_i)+\frac{7}{32}s$ \\
                        $ss = \frac{21}{2^{15}}(a_{-1}+a_0+a_1+a_2)-\frac{11}{2^9}s$ \\
                        $(a_i,a_{i+1}) = \frac{1}{32}$
                }\\\hline
                \specialcell[1.2]{l}{
                        $\sh{4B}$ \\
                        $a_{-1},a_0,a_1,a_2,s$ \\
                        $\la a_i,a_{i+2}\ra\cong\sh{2A}$ \\
                        $ $
                } & \specialcell[1.2]{l}{
                        $a_ia_{i+2} = -\frac{1}{8}(a_{i+1}+a_{i-1})-8s$ \\
                        $a_is = \frac{1}{2^{11}}(7(a_{i+1}+a_{i-1})-26a_i)+\frac{7}{32}s$ \\
                        $ss = \frac{7}{2^{15}}(a_{-1}+a_0+a_1+a_2)-\frac{3}{2^9}s$ \\
                        $(a_i,a_{i+1}) = \frac{1}{64}$
                }\\\hline
                \specialcell[1.2]{l}{
                        $\sh{5A}$ \\
                        $a_{-2},a_{-1},a_0,a_1,a_2,s$ \\
                        $ $
                } & \specialcell[1.2]{l}{
                        $a_is = \frac{7}{2^{11}}(a_{i+1}+a_{i-1}-2a_i)+\frac{7}{32}s$ \\
                        $ss = \frac{35}{2^{17}}(a_{-2}+a_{-1}+a_0+a_1+a_2)$ \\
                        $(a_i,a_j) = \frac{3}{128}$
                }\\\hline
                \specialcell[1.2]{l}{
                        $\sh{6A}$ \\
                        $a_{-2},a_{-1},a_0,a_1,a_2,a_3,s,\bar s_2$ \\
                        $\bar s_2 = \frac{1}{2}(s_2 + s_2^f)$ \\
                        $\la a_i,a_{i+3}\ra\cong\sh{2A}$ \\
                        $\la a_i,a_{i+2}\ra\cong\sh{3A}$ \\
                        \\ \\
                        $ $
                } & \specialcell[1.2]{l}{
                        $a_ia_{i+2} = \frac{1}{64}(3(a_i+a_{i+2})+a_{i+4}-a_{i+1}-a_{i+3}-a_{i+5}) + \bar s_2$ \\
                        $a_ia_{i+3} = \frac{1}{12}(2(a_i+a_{i+3})-a_{i+1}-a_{i+2}-a_{i+4}-a_{i+5}) - 8s - \frac{8}{3}\bar s_2 $ \\
                        $a_is = \frac{1}{2^{11}}(7(a_{i+1}+a_{i-1})-20a_i)+\frac{7}{32}s$ \\
                        $a_i\bar s_2 = \frac{5}{2^{10}3}a_0
                                - \frac{23}{2^{11}3}(a_{i+1}+a_{i-1})
                                + \frac{7}{2^93}(a_{i+2}+a_{i+4}-\frac{1}{2}a_{i+3})
                                - \frac{3}{32}s
                                + \frac{7}{48}\bar s_2$ \\
                        $ss = \frac{49}{2^{17}3}(a_{-2}+a_{-1}+a_0+a_1+a_2+a_3)
                                - \frac{17}{2^{10}}s
                                - \frac{7}{6144}\bar s_2$ \\
                        $s\bar s_2 = \frac{-21}{2^{17}}(a_{-2}+a_{-1}+a_0+a_1+a_2+a_3)
                                + \frac{21}{2^{10}}s
                                + \frac{15}{2^{11}}\bar s_2$\\
                        $\bar s_2\bar s_2 = \frac{107}{2^{17}}(a_{-2}+a_{-1}+a_0+a_1+a_2+a_3)
                                -\frac{9}{2^{10}}s
                                -\frac{77}{2^{11}}\bar s_2$\\
                        $(a_i,a_{i+1}) = \frac{5}{256}$
                }\\
                \hline
        \end{tabular}
        \caption{The Norton-Sakuma algebras}
                \label{tbl-NS}
        \end{center}
        \end{table}

\section{The cover of $\sh{5A}$}
        \label{sec-5A}

        Only the first factor in \eqref{eq-mother} is $0$ in $\sh{5A}$
        and therefore this factor is in $J_{\sh{5A}}$.
        Thus in $R/J_{\sh{5A}}$, as $\al-1$ is invertible,
        \begin{equation}
                \label{eq-lm-5A}
                \lm=\frac{(2\al-2\bt-1)\bt}{(\al-1)}.
        \end{equation}

        After substituting \eqref{eq-lm-5A},
        the only terms with nonzero coefficients in $a_0(z_2z_2)$
        are $a_{-2},a_0,a_2$,
        and the coefficient of $a_0$ in $a_0(z_2z_2)$ is
        \begin{equation}
                \begin{aligned}
                & \frac{        (\al-2\bt)^2        }{(\al-1)^3(\al-4\bt)^2(\al-\bt)} \\
                & \quad \cdot (2\al-1)(4\al^2-5(2\bt+1)\al+6\bt+1)
                ((\al-1)\lm_2-2\al\bt+\bt(2\bt+1)) \\
                & \quad \cdot (1 + \lm_2 + 4\bt - 4\al - 8\bt^{2} - 2\al\lm_2 + 3\al^{2} - 16\bt^{3} + 24\al\bt^{2} + \al^{2}\lm_2 - 8\al^{2}\bt).
                \end{aligned}
        \end{equation}
        We deduce several possibilities.
        We cannot have a nontrivial relation among the elements $a_{-2},a_0,a_2$
        in the cover of $\sh{5A}$,
        because they are linearly independent in $\sh{5A}$.
        Among the coefficients, the possibilities are as follows.
        Firstly, $\bt = \frac{4\al^2-5\al+1}{2(5\al-3)}$.
        Secondly, $\lm_2 = \frac{(2\al-2\bt-1)\bt}{\al-1} = \lm$.
        Thirdly, $\lm_2 = \frac{8\al^2\bt-24\al\bt^2+16\bt^3-3\al^2+8\bt^2+4(\al-\bt)-1}{(\al-1)^2}$.
        In $\sh{5A}$, only the second possibility is satisfied,
        so in the cover of $\sh{5A}$,
        we have $\lm_2 = \frac{(2\al-2\bt-1)\bt}{\al-1} = \lm$.
        Then the coefficient of $a_2$ in $a_0(z_2z_2)$ becomes
        \begin{equation}
                \begin{aligned}
                \frac{1}{4} & \frac{-\bt(2\bt-1)(3\al-4\bt-1)(-1 - 8\bt + 5\al)(2\bt - \al - 3\al\bt + \al^{2})}{(\al-1)^{3}(\al-4\bt)^{2}} \\
                        & \quad\cdot (-2\bt + \al - 8\bt^{2} + 11\al\bt - 4\al^{2} + 16\al\bt^{2} - 13\al^{2}\bt + 3\al^{3}).
                \end{aligned}
        \end{equation}
        We again have new possibilities.

        If $\bt = \frac{1}{4}(3\al-1)$,
        then $\al\neq1,0,\bt,2\bt,4\bt$ and $\bt\neq0,1$
        means $\al\neq-1,-\frac{1}{3},0,\frac{1}{2},1,\frac{5}{3}$.
        This leads to $\sh{3C_\bt}$,
        as it turns out:
        after substitution, $\lm = \frac{1}{8}(3\al-1) = \frac{1}{2}\bt$,
        and by the fusion rules $A^{a_0}_\al$ is killed.
        But we do not pursue this,
        since it is not satisfied by $(\al,\bt)=(\frac{1}{4},\frac{1}{32})$.

        The possibilities $\bt = \frac{\al(1-\al)}{2-3\al}$
        and $(16\al-8)\bt^2+(-13\al^2+11\al-2)\bt+3\al^3-4\al^2+\al=0$
        are not in $R/J_{\sh{5A}}$.

        If $\bt = \frac{1}{8}(5\al-1)$,
        then $\al\neq1,0,\bt,2\bt,4\bt$ and $\bt\neq0,1$
        means $\al\neq-\frac{1}{3},0,\frac{1}{5},\frac{1}{3},1,\frac{9}{5}$.
        This is satisfied by $(\al,\bt)=(\frac{1}{4},\frac{1}{32})$.
        We now compute the action of $a_2$
        on the subspace $Q$ spanned by
        \begin{equation}
                g = s_2 - s_2^f,
                \quad
                h = \bt(a_{-2}+a_{-1}+a_0+a_1+a_2) + \frac{2}{\bt}s + \frac{1}{\bt}(s_2+s_2^f).
        \end{equation}
        $Q$ is fixed by $\ad(a_2)$, and
        \begin{equation}
                        \renewcommand{\arraystretch}{0.9}
                N = \ad(a_2)\vert_Q =
                        \frac{1}{8(\al-1)}
                        \begin{pmatrix}
                                9\al^2-1 & 9\al^2-1 \\
                                \frac{3 - 27\al + 57\al^{2} - 17\al^{3}}{1 - 3\al} & -\al^2-8\al+1
                        \end{pmatrix}.
        \end{equation}
        Therefore $Q$ must decompose into a direct sum
        of $0,\al$ or $\bt$-eigenvectors for $a_2$.
        Now suppose that $\mu$ is an eigenvalue of $N$,
        so that $\det(N-\mu I_2) = 0$.
        That is,
        \begin{equation}
                \frac{1}{32}\frac{(1 - 4\al + 32\mu^{2} - 32\al\mu - 11\al^{2} - 32\al\mu^{2} + 32\al^{2}\mu + 30\al^{3})}{(1 - \al)}
        \end{equation}
        We can substitute $\mu=0,\al,\bt = \frac{1}{8}(5\al-1)$ respectively
        and solve the resulting equation.
        If $\mu = 0$,
        then the numerator factors as $(\al-1)(2\al-1)(3\al+1)(5\al-1)$;
        the only legitimate possibility is $\al = \frac{1}{2}$ with $\bt = \frac{3}{16}$,
        which will be a special case for us.

        If $\mu = \al$,
        the numerator becomes $-60\al^4+74\al^3-5\al^2-10\al+2$.

        If $\mu = \frac{1}{8}(5\al-1)=\bt$,
        the numerator becomes $-\frac{1}{64}(5\al-1)(768\al^3-832\al^2-69\al+129)$.

        Note that $\al = \frac{1}{5}$ was already ruled out.
        So we are left with the additional assumptions that
        \begin{equation}
                \label{eq-5A-forbidden}
                (-60\al^4+74\al^3-5\al^2-10\al+2)
                (768\al^3-832\al^2-69\al+129) \neq 0.
        \end{equation}
        When additionally $(\al,\bt)\neq(\frac{1}{2},\frac{3}{16})$,
        we have to quotient by $Q$.
        And then get the multiplication in Table \ref{tbl-5A}:

        \begin{theorem}
                \label{thm-5A}
                The algebra $A$ over $R = \ZZ[\sfrac{1}{2},\al]$
                with basis $a_{-2},a_{-1},a_0,a_1,a_2,s$
                and multiplication from Table \ref{tbl-5A}
                is a $2$-generated $\Phi(\al,\frac{1}{8}(5\al-1))$-axial algebra
                for all $\al\not\in\{1,0,\frac{1}{5},\frac{1}{2},\frac{9}{5}\}$
                nor a root of \eqref{eq-5A-forbidden},
                and is the cover $\sh{5A_\al}$ of $\sh{5A}$.
        \end{theorem}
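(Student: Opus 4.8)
The plan is to package the necessary conditions extracted above into an explicit pair of matching ideals $(\bar J_{\sh{5A}},\bar I_{\sh{5A}})$, to check that the quotient carries the multiplication of Table~\ref{tbl-5A} and is genuinely $\Phi$-axial, and finally to argue minimality so that the quotient is the cover in the sense of Definition~\ref{def axcov}. First I would collect the ring relations forced in this section: the first factor of \eqref{eq-mother} gives $\lm=\frac{(2\al-2\bt-1)\bt}{\al-1}$ as in \eqref{eq-lm-5A}, the coefficient of $a_0$ in $a_0(z_2z_2)$ forces $\lm_2=\lm$, and the coefficient of $a_2$ forces $\bt=\frac18(5\al-1)$ (the branch $\bt=\frac14(3\al-1)$, which degenerates to $\sh{3C_\bt}$, and the two branches not lying in $R/J_{\sh{5A}}$, being discarded). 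These generate $\bar J_{\sh{5A}}$ and collapse the ring to $R=\ZZ[\sfrac12,\al]$, with $\lm=\lm_2=\frac{3(5\al-1)}{32}$. On the algebra side I would place the subspace $Q=\la g,h\ra$ into $\bar I_{\sh{5A}}$: its necessity is exactly the statement that the matrix $N=\ad(a_2)\vert_Q$ has no eigenvalue in $\{0,\al,\bt\}$ once the excluded values of $\al$ are avoided, so $a_2$ cannot be semisimple over $\Phi$ unless $Q$ is killed. Imposing $Q=0$ (equivalently $s_2=s_2^f$ together with $h=0$) expresses $s_2,s_2^f$ through the $a_i$ and $s$, leaving the six-element basis; substituting all of these relations into the table of Theorem~\ref{thm-mt} should reproduce Table~\ref{tbl-5A}, a lengthy but mechanical specialisation of the type delegated to \cite{gap}.

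Next I would verify that the resulting $A$ is $\Phi(\al,\frac18(5\al-1))$-axial. Each $a_i$ is a primitive idempotent by the construction of $A_{R_0}$, and $R=\ZZ[\sfrac12,\al]$ being a domain makes the free module $A$ everywhere faithful. Specialising the eigenvectors of Lemma~\ref{lem-eigvect-ising} exhibits explicit eigenvectors of eigenvalues $1,0,\al,\bt$, and a dimension count against the six-element basis (valid once the excluded eigenvalue collisions are avoided) shows they furnish the full decomposition $A=A^{a_0}_1\oplus A^{a_0}_0\oplus A^{a_0}_\al\oplus A^{a_0}_\bt$. The heart of the step is the fusion rules: beyond the $\ZZ/2$-grading, which is automatic and supplies the Miyamoto involutions, the refined Ising relations $\al\star\al=\{1,0\}$, $0\star0=\{0\}$, $\al\star\bt=\{\bt\}$ and $\bt\star\bt=\{1,0,\al\}$ must each be checked directly from Table~\ref{tbl-5A}; the critical case $0\star0$ is precisely the relation \eqref{eq-mother} whose vanishing was engineered by the quotient by $Q$. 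Since the flip $f$ is now an automorphism (note $s_2=s_2^f$) and $T=\la\tau(a_0),\tau(a_1)\ra$ identifies all five axes, it suffices to test $a_0$, reducing the check to a finite polynomial identity over $\ZZ[\sfrac12,\al]$.

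Finally I would argue that $A$ is the cover and recovers $\sh{5A}$. As $R$ is a domain, $\bar J_{\sh{5A}}$ is prime, so $(\bar J_{\sh{5A}},\bar I_{\sh{5A}})$ is a legitimate candidate for the intersection in Definition~\ref{def axcov}; minimality holds because every relation imposed was \emph{forced} by a fusion rule, the discarded branches being exactly those that leave the irreducible component through $(\sfrac14,\sfrac1{32})$ or fail everywhere-faithfulness. Specialising $\al=\sfrac14$ gives $\bt=\sfrac1{32}$ and $\lm=\sfrac3{128}$, and I would confirm that Table~\ref{tbl-5A} then reduces to the $\sh{5A}$ products of Table~\ref{tbl-NS}, so that $\sh{5A}$ is a quotient of $A$ and $A$ is its maximal axial generalisation. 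The main obstacle is the \emph{sufficiency} half of the fusion-rule verification: the analysis above only shows the imposed relations are necessary, so one must confirm that after enforcing $0\star0$ (via $Q$) together with the ring relations \emph{all} remaining Ising products hold, with no further constraint hidden in the $\al\star\al$ or $\bt\star\bt$ products — the step most likely to require the full computer-algebra machinery.
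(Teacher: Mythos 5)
Your proposal follows essentially the same route as the paper: the relations you impose ($\lm$ from \eqref{eq-lm-5A}, $\lm_2=\lm$, $\bt=\frac{1}{8}(5\al-1)$, and the killing of $Q$) are exactly the forced relations derived in Section \ref{sec-5A}, and the paper's proof of Theorem \ref{thm-5A} likewise reduces to verifying the fusion rules for $a_0,a_1$ in \cite{gap} using Lemma \ref{lem-eigvect-ising} and the symmetry among the $a_i$, with the cover property coming from the necessity of those relations. The only step you omit is the paper's separate computational check that at $(\al,\bt)=(\sfrac{1}{2},\sfrac{3}{16})$ the space $Q$ is also killed and the same presentation results --- a completeness point for the cover analysis that is harmless for the theorem as stated, since $\al=\sfrac{1}{2}$ is excluded from its hypotheses.
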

        \begin{proof}
                $A$ is generated by $a_0,a_1$,
                so we only need to check the fusion rules for $a_0,a_1$.
                We do this in \cite{gap},
                using Lemma \ref{lem-eigvect-ising}
                and the obvious symmetry between the $a_i$.

                We check manually, that is, in \cite{gap},
                that in the case $(\al,\bt) = (\frac{1}{2},\frac{3}{16})$,
                the vector space $Q$ is also killed
                and the resulting algebra has the same presentation.
        \end{proof}

        \begin{table}[h]
        \begin{center}
        \renewcommand{\arraystretch}{1.2}
        \begin{tabular}{ll}
                \hline Description \quad & Products \\
                \hline
                $a_{-2},a_{-1},a_0,a_1,a_2,s$ &
                        $a_ia_{i+2} = \frac{1}{16}(5\al-1) (a_i + a_{i+2} - a_{i+1} - a_{i+3} - a_{i+4}) - s$ \\
                $a_i = a_{i\bmod5}$ &
                        $a_is = \frac{1}{64}(1 - 5\al)(1 + 3\al) a_i - \frac{1}{2^{7}}(1 + 3\al)(1 - 5\al) (a_{i-1} + a_{i+1}) + \frac{1}{8}(1 + 3\al) s$ \\
                 &
                  $ss = \frac{1}{2^{11}}(7\al-3)(1 + 3\al)(1 - 5\al) (a_{-2} + a_{-1} + a_0 + a_1 + a_2) + \frac{5}{2^{7}}(1 - 5\al)(1 + 3\al) s$ \\
                 &
                        $(a_i,a_j) = \frac{3}{32}(5\al-1)$ \\
                \hline
        \end{tabular}
        \caption{$\sh{5A_\al}$}
                \label{tbl-5A}
        \end{center}
        \end{table}

        \begin{lemma}
                \label{lem-5A-form}
                The algebra $\sh{5A_\al}$ is Frobenius;
                it is positive-definite for $\frac{1}{5}<\al<\frac{7}{3}$,
                and degenerate for $\al = -\frac{1}{3},\frac{7}{3}$.
        \end{lemma}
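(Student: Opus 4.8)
The strategy is to produce the associating form as an explicit Gram matrix on the basis of $\sh{5A_\al}$, block-diagonalise it using the order-$5$ cyclic symmetry, and read off both claims from the resulting $1$- and $2$-dimensional blocks.

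First I would take the form $(,)$ from Section~\ref{sec-mt}, normalised by $(a_i,a_i)=1$, and descend it to $\sh{5A_\al}$. The off-diagonal axis pairings are forced by the associating law together with $a_ia_i=a_i$: from $(a_i,a_j)=(a_ia_i,a_j)=(a_i,a_ia_j)$ and the multiplication of Table~\ref{tbl-5A} one gets $(a_i,a_j)=\lm$ for all $i\neq j$, where $\lm=\frac{3}{32}(5\al-1)$ and $\bt=\frac18(5\al-1)$. Likewise $(a_i,s)=(a_i,a_i\circ a_{i+1})=\lm(1-\bt)-\bt=:\nu$, and the order-$5$ automorphism $\rho=\tau(a_0)\tau(a_1)$ fixes $s$ and permutes the axes transitively (Lemma~\ref{lem-auts-preserve-axes}), so this value is independent of $i$; finally $(s,s)=:\sigma$ is the specialisation of the formula in Section~\ref{sec-mt} at $\bt=\frac18(5\al-1)$, $\lm_2=\lm$. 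The form is symmetric because $\sh{5A_\al}$ is generated by idempotents, and to see it is associating I would verify $(xy,z)=(x,yz)$ on the basis; by the $\rho$- and $\tau$-symmetry only a few triples are independent, and these I would check in \cite{gap}. This establishes that $\sh{5A_\al}$ is Frobenius with Gram matrix
\begin{equation*}
G=\begin{pmatrix}
1&\lm&\lm&\lm&\lm&\nu\\
\lm&1&\lm&\lm&\lm&\nu\\
\lm&\lm&1&\lm&\lm&\nu\\
\lm&\lm&\lm&1&\lm&\nu\\
\lm&\lm&\lm&\lm&1&\nu\\
\nu&\nu&\nu&\nu&\nu&\sigma
\end{pmatrix}.
\end{equation*}

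Next I would diagonalise $G$ using the $C_5$-action. The axis block is $(1-\lm)I_5+\lm\,\mathbf1\mathbf1^\top$ with $\mathbf1=(1,1,1,1,1)^\top$, and since $s$ pairs equally with every $a_i$ it is $(,)$-orthogonal to the hyperplane $\{\sum c_ia_i:\sum c_i=0\}$. Hence $G$ splits as $(1-\lm)I_4$ on that $4$-dimensional subspace together with
\begin{equation*}
G_2=\begin{pmatrix}1+4\lm&\sqrt5\,\nu\\ \sqrt5\,\nu&\sigma\end{pmatrix}
\end{equation*}
on the span of $\tfrac1{\sqrt5}\sum_i a_i$ and $s$, so that $\det G=(1-\lm)^4\det G_2$. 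Therefore $G$ is positive-definite precisely when $1-\lm>0$, $1+4\lm>0$ and $\det G_2=(1+4\lm)\sigma-5\nu^2>0$. The first two factor as $1-\lm=\tfrac{5}{32}(7-3\al)$ and $1+4\lm=\tfrac58(3\al+1)$, so they hold exactly on $-\tfrac13<\al<\tfrac73$, which contains the target interval. It then remains to show $\det G_2>0$ on $(\tfrac15,\tfrac73)$, which I would do by computing and factoring the degree-$4$ polynomial $\det G_2$ in \cite{gap}: its only real roots in $[\tfrac15,\tfrac73]$ are the excluded endpoint $\al=\tfrac15$ (where $\nu=\sigma=0$), while its sign on the interior is fixed to be positive by evaluating at the Majorana point $\al=\tfrac14$ (where $\bt=\tfrac1{32}$).

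For the degeneracy claim I would read off the zeros of $\det G=(1-\lm)^4\det G_2$. The factor $(1-\lm)^4$ vanishes exactly at $\al=\tfrac73$ (where $\lm=1$), giving a $4$-dimensional radical. At $\al=-\tfrac13$ one has $\bt=-\tfrac13$, $\lm=-\tfrac14$, so $1+4\lm=0$ and $\nu=\lm(1-\bt)-\bt=0$, whence $\det G_2=(1+4\lm)\sigma-5\nu^2=0$ while $1-\lm=\tfrac54\neq0$, and the radical is $1$-dimensional. The main obstacle is the remaining part of the positivity argument: certifying, through an explicit factorisation of $\det G_2$, that its only real roots meeting the domain of Theorem~\ref{thm-5A} are $\tfrac15$ (excluded) and $-\tfrac13$, so that no further degeneracy or change of sign occurs on $(\tfrac15,\tfrac73)$. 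This is a finite but bulky polynomial computation, and is exactly where I would defer to \cite{gap} rather than to hand calculation.
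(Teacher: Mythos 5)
Your proposal is correct and reaches both conclusions of the lemma, and at bottom it rests on the same computational core as the paper: the form of Section~\ref{sec-mt} specialised to $\sh{5A_\al}$ (your values $\lm=\frac{3}{32}(5\al-1)$, $\nu=\lm(1-\bt)-\bt$ agree with Table~\ref{tbl-5A}), a \cite{gap} check that it associates, and an analysis of the Gram determinant. Where you genuinely differ is in how positivity is extracted. The paper simply has \cite{gap} compute the full $6\times6$ determinant, $\frac{-5^6}{2^{36}}(3\al-7)^5(3\al+1)^2(5\al-1)$, and asserts positive-definiteness exactly where this is positive; strictly speaking, positivity of the determinant alone is not a sufficient criterion, so the paper leaves a Sylvester-type step implicit. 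Your $C_5$ block-diagonalisation into $(1-\lm)I_4\oplus G_2$ supplies precisely that step: positive-definiteness becomes the transparent conjunction $1-\lm>0$, $1+4\lm>0$, $\det G_2>0$, with $1-\lm=\frac{5}{32}(7-3\al)$ and $1+4\lm=\frac{5}{8}(3\al+1)$, and the paper's determinant is recovered as $(1-\lm)^4\det G_2$. This buys a complete positivity argument and reduces the \cite{gap} factorisation burden from a degree-$8$ determinant to the degree-$4$ polynomial $\det G_2$.

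Two slips in your write-up, neither of which affects the conclusions. First, comparing your decomposition with the paper's determinant gives $\det G_2=\frac{5^2}{2^{16}}(7-3\al)(3\al+1)^2(5\al-1)$, so $\al=\frac{7}{3}$ \emph{is} a root of $\det G_2$; your claim that $\al=\frac{1}{5}$ is its only real root in the closed interval $[\frac{1}{5},\frac{7}{3}]$ is false, though your argument survives because the open interval $(\frac{1}{5},\frac{7}{3})$ contains no roots and the value at the Majorana point $\al=\frac{1}{4}$ is positive. Second, and for the same reason, at $\al=\frac{7}{3}$ the radical is $5$-dimensional rather than $4$-dimensional: both the $(1-\lm)I_4$ block and $G_2$ degenerate there. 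Since the lemma asserts only degeneracy at $\al=-\frac{1}{3},\frac{7}{3}$, and not the dimension of the radical, both assertions of the statement still follow from your argument.
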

        \begin{proof}
                We check by direct computation in \cite{gap}
                that the form on $\sh{5A_\al}$
                induced as a quotient of the form in Section \ref{sec-mt}
                is Frobenius.
                We also use \cite{gap}
                to calculate the Gram matrix,
                the determinant of which is
                \begin{equation}
                        \frac{-5^6}{2^{36}}(3\al-7)^5(3\al+1)^2(5\al-1).
                \end{equation}
                This is positive exactly when $(3\al-7)(5\al-1)<0$,
                and the root $(5\al-1)$ is not admissible
                since this implies $\bt = \frac{1}{5} = \al$,
                but the other roots have $(\al,\bt)$
                as $(\sfrac{7}{3},\sfrac{4}{3})$ and $(\sfrac{1}{3},\sfrac{1}{12})$.
        \end{proof}

        \begin{lemma}
                \label{lem-5a-miyam}
                We have $\size{\tau(a_0)\tau(a_1)} = 5$.
        \end{lemma}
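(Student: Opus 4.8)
The plan is to determine $\size{\rho}$, where $\rho = \tau(a_0)\tau(a_1)$, by appealing to Lemma~\ref{lem-global-order}, which reduces the order of $\rho$ to the size of the orbit $a_0^T\cup a_1^T$ of axes under the dihedral group $T=\la\tau(a_0),\tau(a_1)\ra$. Concretely, I would first record from Theorem~\ref{thm-5A} and Table~\ref{tbl-5A} that in $\sh{5A_\al}$ the generating axes satisfy $a_i = a_{i\bmod 5}$, so that the full set of axes $\{a_i\mid i\in\ZZ\}$ equals $\{a_{-2},a_{-1},a_0,a_1,a_2\}$. These five axes occur among the six basis vectors of the algebra and are therefore linearly independent; in particular they are pairwise distinct, so the smallest positive integer $n$ with $a_0 = a_n$ is exactly $n = 5$.

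Next I would feed this into Lemma~\ref{lem-global-order}. Since $n=5$ is odd, the odd-case analysis in the proof of that lemma gives $a_0^T = a_1^T$ of size $5$ and $\rho^5 = \rho^n = 1$ as an automorphism of the algebra. Hence $\size{\rho}$ divides $5$, so $\size{\rho}\in\{1,5\}$.

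Finally I would rule out $\size{\rho}=1$. By the conventions of Section~\ref{sec-prep}, $a_2 = a_0^{\rho}$, and since $a_0$ and $a_2$ are distinct basis vectors we have $a_0^{\rho}\neq a_0$, whence $\rho\neq\id$. Therefore $\size{\rho}=5$, as claimed.

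I expect the only point requiring care---rather than a genuine obstacle---to be the justification that the five axes remain distinct in the cover (as opposed to collapsing under the extra specialisations used to build $\sh{5A_\al}$); this is guaranteed because they are distinct basis elements of the algebra constructed in Theorem~\ref{thm-5A}. Everything else is a direct bookkeeping of the dihedral action already set up in Section~\ref{sec-prep}, with no new computation needed. Alternatively, one could bypass Lemma~\ref{lem-global-order} entirely by computing the action of $\tau(a_0)$ and $\tau(a_1)$ on the axes directly from the $\bt$-eigenvectors $y=a_1-a_{-1}$ and $y_2=a_2-a_{-2}$ of Lemma~\ref{lem-eigvect-ising}, exhibiting $\rho$ as the $5$-cycle $i\mapsto i-2\pmod 5$ on the indices; but the route via Lemma~\ref{lem-global-order} is shorter.
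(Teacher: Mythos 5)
Your overall skeleton---reduce everything to Lemma~\ref{lem-global-order}, get $\rho^5=1$ from the odd case with $n=5$, and rule out $\rho=\id$ via $a_0^\rho=a_2\neq a_0$---is the same strategy as the paper, and those last two steps are sound. The gap is in your first step. The line ``$a_i=a_{i\bmod 5}$'' in Table~\ref{tbl-5A} is an indexing convention used to write down the multiplication formulas on the free module with basis $\{a_{-2},\dotsc,a_2,s\}$; it is not, by itself, a statement about the Miyamoto group. The elements $a_i$ for $\size{i}>2$ are \emph{defined} through the $T$-action ($a_{2i}=a_0^{\rho^i}$, $a_{2i+1}=a_1^{\rho^i}$, Section~\ref{sec-prep}), so the assertion that the smallest $n$ with $a_0=a_n$ is $5$---equivalently, that $a_3=a_1^{\rho}$ coincides with the basis vector $a_{-2}$---is a claim about how $\tau(a_0)$ and $\tau(a_1)$ actually act on $\sh{5A_\al}$, and that action must be computed from the eigenspace decompositions. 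Distinctness of the five basis vectors only gives $n\geq5$ (or that no such $n$ exists); it does nothing to close the orbit up after five steps. Citing the table here is close to circular, since the mod-$5$ periodicity of the $T$-action is essentially equivalent to the statement $\size{\rho}=5$ that you are trying to prove; tellingly, the paper does not treat it as already known, but proves it inside this very lemma.

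What fills the hole is exactly what you demoted to an ``alternative'': compute the involutions explicitly. The paper's proof records that $\tau(a_0)$ and $\tau(a_1)$ act on the basis of $\sh{5A_\al}$ as the permutation matrices corresponding to $(1,5)(2,4)(3)(6)$ and $(1,2)(3,5)(4)(6)$ (this comes from the $\bt$-eigenvectors $y=a_1-a_{-1}$, $y_2=a_2-a_{-2}$ of Lemma~\ref{lem-eigvect-ising}), so that $a_0$ and $a_1$ lie in a single $T$-orbit of size $5$ and $\rho$ is visibly a $5$-cycle; Lemma~\ref{lem-global-order} then upgrades this to $\size{\rho}=5$ not merely on $\sh{5A_\al}$ but as an automorphism of any larger algebra containing these axes. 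So the explicit computation is not an optional shortcut to be avoided: it is the essential input to your step~1, and once it is in place your remaining steps and the paper's argument coincide.
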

        \begin{proof}
                $\tau(a_0)$ and $\tau(a_1)$ act as the permutation matrices
                corresponding to $(1,5)(2,4)(3)(6)$ and $(1,2)(3,5)(4)(6)$
                on $\sh{5A_\al}$, respectively.
                Since $a_0$ and $a_1$ are in the same orbit under $T$,
                we have that $\size{\tau(a_0)\tau(a_1)} = 5$ everywhere by Lemma \ref{lem-global-order}.
        \end{proof}

\section{The cover of $\sh{6A}$}
        \label{sec-6A}

        Only the second factor in \eqref{eq-mother} is $0$ in $\sh{6A}$:
        \begin{equation}
                \label{eq-6A-mother}
                2(\al(\al-1)-6\al\bt+4\bt)\lm+(10\bt+1)\al\bt-2\bt^2(2+3\bt) = 0.
        \end{equation}
        Since $\dim\sh{6A}=8$,
        the key relations for the cover come from the ring,
        that is, $\bar I_{\sh{6A}} = \bar J_{\sh{6A}}\hat U$.
        Suppose that the coefficient of $\lm$ in \eqref{eq-6A-mother} is $0$,
        that is, $\al^2-6\al\bt-\al+4\bt = 2(2-3\al)\bt + \al(1-\al) =0$.
        As $\al=\frac{2}{3}$ is not a solution to this equation,
        we can rearrange to find
        \begin{equation}
                \bt = \frac{\al(\al-1)}{2(2-3\al)}
        \end{equation}
        (which is not in $\bar J_{\sh{6A}}$)
        and substituting this into \eqref{eq-6A-mother},
        \begin{equation}
                7\al^2-12\al+4 = 0.
        \end{equation}
        This irreducible polynomial is coprime to $\al-\frac{1}{4}$,
        so that $(\al-\sfrac{1}{4})(7\al^2-12\al+4)=(1)$
        and therefore if $7\al^2-12\al+4\in J_{\sh{6A}}$
        then $J_{\sh{6A}}+(\al-\sfrac{1}{4})+(\bt-\sfrac{1}{32}) = (1)$ is not maximal.
        So $\al$ is not a root of $7\al^2-12\al+4$,
        that is, $\al\neq\frac{1}{7}(4\pm\sqrt2)$.

        Therefore $\al^2-6\al\bt-\al+4\bt\neq0$ and
        \begin{equation}
                \lm=\frac{(10\bt^2+\bt)\al-(4\bt+6)\bt^2}{2(\al^2-6\al\bt-\al+4\bt)}.
        \end{equation}
        After making the substitution,
        we again calculate $a(z_2z_2)$,
        and find that the coefficient of $a_2$ is
        \begin{equation}
                \begin{aligned}
                        \label{eq-6A-lm2}
                        & \frac{\bt(\al-\bt)(1-2\bt)}
                        {4(\al-4\bt)(\al^2-(6\bt+1)\al+4\bt)^2)}\cdot\\
                        & \Bigl(
        -2\al(\al^{2} - (6\bt + 1)\al + 4\bt)^{2}\lm_2 + 3\al^{6} - (3\bt + 4)\al^{5} - (60\bt^{2} + 6\bt - 1)\al^{4} + \bt(2\bt + 1)(98\bt + 9)\al^{3} \\
        & \quad\quad - 2\bt(88\bt^{3} + 2^{2}3\cdot11\bt^{2} + 30\bt + 1)\al^{2} + 8\bt^{2}(8\bt^{3} + 20\bt^{2} + 12\bt + 1)\al - 32\bt^{5} - 32\bt^{4} - 8\bt^{3}
                        \Bigr).
                \end{aligned}
        \end{equation}
        We deduce an expression for $\lm_2$
        (its coefficient being not $0$
        when $\al\neq\frac{1}{7}(4\pm\sqrt2)$).
        Substituting, we find that the coefficient of $a_2$ in $a_0(xx_2)$ is
        \begin{equation}
                \frac{-\bt^2(2\bt-1)\al(\al-2\bt)(\al^2+8\bt\al-4\bt)}{4(\al-4\bt)(\al^2-(6\bt+1)\al+4\bt)}.
        \end{equation}
        So $\bt = \frac{\al^2}{4(1-2\al)}$.
        After specialising,
        we find no more relations.
        We get

        \begin{theorem}
                \label{thm-6A}
                The algebra $A$ over $R = \ZZ[\sfrac{1}{2},\al,\al^{-1},(1-2\al)^{-1},(1-3\al)^{-1},(2-5\al)^{-1}]$,
                $\al\not\in\{0,-4\pm2\sqrt5,\frac{1}{3},\frac{2}{5},\frac{1}{2},1\}$,
                with basis $a_{-2},a_{-1},a_0,a_1,a_2,s,s_2,s_2^f$
                and multiplication from Table \ref{tbl-6A}
                is a $2$-generated $\Phi(\al,\frac{\al^2}{4(1-2\al)})$-axial algebra
                and the cover $\sh{6A_\al}$ of Norton-Sakuma algebra $\sh{6A}$.
        \end{theorem}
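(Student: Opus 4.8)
The plan is to follow the template of Theorem \ref{thm-5A}. By construction, $A$ is the free $R$-module on $B=\{a_{-2},a_{-1},a_0,a_1,a_2,s,s_2,s_2^f\}$ carrying the multiplication of Theorem \ref{thm-mt}, specialised at $\bt=\frac{\al^2}{4(1-2\al)}$ together with the values of $\lm$ and $\lm_2$ forced by the computations preceding the statement. In this algebra $a_0$ and $a_1$ are primitive idempotents (the idempotency and the relation $A^{a_0}_1=\la a_0\ra$ are part of the construction and Lemma \ref{lem-eigvect-ising}), and the spanning set splits into the eigenvectors exhibited there, so each generator acts semisimply with eigenvalues in $\Phi(\al,\bt)=\{1,0,\al,\bt\}$. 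Since $A$ is generated by $a_0,a_1$, the only outstanding requirement for $A$ to be $\Phi(\al,\bt)$-axial is that the eigenspaces of each generator close up under multiplication according to the Ising fusion rules of Table \ref{tbl-ising}.

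First I would take the eigenvectors of $a_0$ from Lemma \ref{lem-eigvect-ising} --- the $0$-eigenvectors $z,z_2$, the $\al$-eigenvectors $x,x_2$, the $\bt$-eigenvectors $y=a_1-a_{-1}$, $y_2=a_2-a_{-2}$, and $a_0$ itself --- and compute every pairwise product, projecting each onto the eigenspaces to confirm it lands where $\star$ prescribes; the substantive rules to check are $\al\star\al\subseteq\{1,0\}$, $\al\star\bt\subseteq\{\bt\}$ and $\bt\star\bt\subseteq\{1,0,\al\}$. These reduce to polynomial identities over $R$, which I would verify in \cite{gap}. Unlike $\sh{5A}$, here $n=6$ is even, so the generators $a_0,a_1$ lie in disjoint $T$-orbits and need not be conjugate; I would therefore either repeat the check with $a_1$, or, invoking the standing assumption $\lm_1=\lm_1^f,\lm_2=\lm_2^f$, use the flip $f$ (which under this assumption exchanges the $a_0$- and $a_1$-eigenstructures together with $s_2,s_2^f$) to transport the verification for $a_0$ to $a_1$.

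It then remains to argue that $A$ is the cover rather than merely an axial algebra. This is exactly what the preceding derivation supplies: at each step the smallest admissible factor was extracted (Lemma \ref{lem cover smallest factor}), forcing in turn $\lm$, then $\lm_2$, and finally $\bt=\frac{\al^2}{4(1-2\al)}$, after which specialisation yields no further relations, so $\bar J_{\sh{6A}}$ and $\bar I_{\sh{6A}}$ are precisely the ideals found. The excluded values of $\al$ are those where $R$ fails to invert $\al,1-2\al,1-3\al,2-5\al$ or where $\bt\in\{0,1\}$ --- the roots $-4\pm2\sqrt5$ being exactly where $\bt=1$ --- so that away from this set $A$ is everywhere faithful and $a_0,a_1$ are genuine $\Phi(\al,\bt)$-axes. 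I expect the fusion-rule verification to be the main obstacle: the products $\bt\star\bt$ and $\al\star\al$ must be checked over the full eight-dimensional, unsymmetrised algebra (with $s_2\neq s_2^f$), the polynomial coefficients are large, and closure into the correct eigenspaces relies on the cancellations guaranteed only by the specific value of $\bt$ and the invertibility of the excluded denominators.
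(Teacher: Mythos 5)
Your proposal is correct and follows essentially the same route as the paper: the paper's proof likewise observes that since $A$ is generated by $a_0,a_1$ only the fusion rules for these two axes need checking, verifies them in \cite{gap} using the eigenvectors of Lemma \ref{lem-eigvect-ising}, and lets the cover property follow from the chain of forced relations (the factor of \eqref{eq-mother} fixing $\lm$, then $\lm_2$ from $a(z_2z_2)$, then $\bt=\frac{\al^2}{4(1-2\al)}$ from $a_0(xx_2)$, with no further relations after specialisation, which together with $\dim\sh{6A}=8$ gives $\bar I_{\sh{6A}}=\bar J_{\sh{6A}}\hat U$). Your additional care about the two $T$-orbits of generators and about identifying the excluded $\al$ (in particular $-4\pm2\sqrt5$ as the locus $\bt=1$) is consistent with, and slightly more explicit than, what the paper records.
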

        \begin{proof}
                $A$ is generated by $a_0,a_1$,
                so we only need to check the fusion rules for $a_0,a_1$.
                We do this in \cite{gap},
                using Lemma \ref{lem-eigvect-ising}.
        \end{proof}

        \begin{table}[h]
        \begin{center}
        \renewcommand{\arraystretch}{1.2}
        \begin{tabular}{ll}
                \hline Description \quad & Products \\
                \hline
                $a_{-2},a_{-1},a_0,a_1,a_2,a_3,s,\bar s_2$ &
                        $a_ia_{i+2} = \frac{1}{8}\frac{\al(3\al - 1)}{(2\al - 1)} a_{i+4} - \frac{1}{8}\frac{\al(3\al - 1)}{(2\al - 1)} (a_{i+1} + a_{i+3} + a_{i+5}) + \frac{1}{8}\frac{(\al - 1)\al}{(2\al - 1)} (a_i + a_{i+2}) + \bar s_2$ \\
                $\bar s_2 = \frac{1}{2}(s_2 + s_2^f)$ &
                        $a_ia_{i+3} = \frac{1}{2}\frac{\al(3\al - 1)}{(5\al - 2)}(a_i+a_{i+3}) - \frac{1}{2}\frac{\al(2\al - 1)}{(5\al - 2)} (a_{i+1} + a_{i+2} + a_{i+4} + a_{i+5}) + 4\frac{(2\al - 1)}{\al} s$ \\
                $a_i = a_{i\bmod6}$ &
                        $\qquad\qquad - 4\frac{2\al-1}{5\al-2}\bar s_2$ \\
                $\la a_i,a_{i+3}\ra\cong\sh{3C_\al}$ &
                  $a_is = -\frac{1}{32}\frac{\al^{3}(9\al - 4)}{(2\al - 1)^{2}} (a_{i-1} + a_{i+1}) - \frac{1}{16}\frac{\al^{2}(3\al - 2)}{(2\al - 1)} a_i + \frac{1}{4}\frac{\al(9\al - 4)}{(2\al - 1)} s$ \\
                $\la a_i,a_{i+2}\ra\cong\sh{3A_{\al,\bt}}$ &
                  $a_i\bar s_2 =\frac{1}{32}\frac{\al^{2}(9\al - 4)}{(5\al - 2)} (a_{i+4} + a_{i+2}) - \frac{1}{32}\frac{\al^{2}(3\al - 1)(43\al^{2} - 37\al + 8)}{(2\al - 1)^{2}(5\al - 2)} (a_{i-1} + a_{i+1})$ \\
                 &
                         $\qquad\qquad - \frac{1}{32}\frac{\al(87\al^{3} - 3^{3}5\al^{2} + 60\al - 8)}{(2\al - 1)(5\al - 2)} a_i - \frac{1}{32}\frac{\al^{2}(3\al - 1)(9\al - 4)}{(2\al - 1)(5\al - 2)} a_{i+3} + \frac{1}{4}\frac{(3\al - 1)(5\al - 2)}{(2\al - 1)} s$ \\
                 &
                         $\qquad\qquad + \frac{1}{4}\frac{\al(9\al - 4)}{(5\al - 2)} \bar s_2$ \\
                 &
                  $ss = -\frac{1}{2^{8}}\frac{\al^{4}(3\al - 1)(9\al - 4)^{2}}{(2\al - 1)^{3}(5\al - 2)} (a_i + a_{i+1} + a_{i+2} + a_{i+3} + a_{i+4} + a_{i+5})$ \\
                 & 
                         $\qquad\qquad+ \frac{1}{16}\frac{\al^{2}(39\al^{2} - 22\al + 2)}{(2\al - 1)^{2}} s - \frac{1}{32}\frac{\al^{4}(9\al - 4)}{(2\al - 1)^{2}(5\al - 2)} \bar s_2$ \\
                 &
                  $s\bar s_2 = -\frac{1}{2^{8}}\frac{\al^{3}(3\al - 1)(5\al - 2)(9\al - 4)}{(2\al - 1)^{3}} (a_i + a_{i+1} + a_{i+2} + a_{i+3} + a_{i+4} + a_{i+5})$ \\
                 & 
                         $\qquad\qquad+ \frac{3}{16}\frac{\al^{2}(3\al - 1)(9\al - 4)}{(2\al - 1)^{2}} s + \frac{1}{32}\frac{\al^{2}(3\al - 2)(5\al - 2)}{(2\al - 1)^{2}} \bar s_2$ \\
                 &
                  $\bar s_2\bar s_2 = -\frac{1}{2^{8}}\frac{\al^{2}(3\al - 1)}{(2\al - 1)^{3}(5\al - 2)}(3\cdot7\cdot29\al^{4} - 2^{3}131\al^{3} + 2^{2}173\al^{2} - 2^{4}13\al + 24)\cdot$ \\
                 &
                         $\qquad\qquad(a_i + a_{i+1} + a_{i+2} + a_{i+3} + a_{i+4} + a_{i+5}) + \frac{3}{16}\frac{\al(3\al - 1)^{2}(5\al - 2)}{(2\al - 1)^{2}} s$ \\
                 &
                         $\qquad\qquad+ \frac{1}{32}\frac{\al(3\cdot173\al^{4} - 2^{6}13\al^{3} + 2\cdot3\cdot5\cdot17\al^{2} - 2^{4}3^{2}\al + 16)}{(2\al - 1)^{2}(5\al - 2)} \bar s_2$ \\
                 &
                        $(a_i,a_{i+1}) = \frac{1}{16}\frac{\al^{2}(2-3\al)}{(2\al - 1)^{2}}$ \\
                \hline
        \end{tabular}
        \caption{$\sh{6A_\al}$}
                \label{tbl-6A}
        \end{center}
        \end{table}

        \begin{lemma}
                \label{lem-6A-form}
                The algebra $\sh{6A_\al}$ is Frobenius;
                it is positive-definite for $\al\in(\frac{1}{2},\frac{4}{7})\cup(\frac{2}{3},1)\cup(\frac{1}{24}(1-\sqrt{97}),\frac{1}{24}(1+\sqrt{97}))$,
                and degenerate for $\al = \frac{3}{2},\frac{4}{7},\frac{1}{24}(1\pm\sqrt{97})$.
        \end{lemma}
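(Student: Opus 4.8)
The plan is to mirror the proof of Lemma~\ref{lem-5A-form} and delegate the heavy computation to \cite{gap}. First I would take the bilinear form $(,)$ constructed on $A_{R_0}$ in Section~\ref{sec-mt} and specialise it along the relations that define $\sh{6A_\al}$: the substitution $\bt=\frac{\al^2}{4(1-2\al)}$ together with the values of $\lm_1,\lm_2$ recorded in Table~\ref{tbl-covers}, followed by the passage to the $8$-dimensional quotient of Theorem~\ref{thm-6A}. The induced quotient form is then read off as a Gram matrix $G(\al)$ in the basis $a_{-2},a_{-1},a_0,a_1,a_2,a_3,s,\bar s_2$ of Table~\ref{tbl-6A}, with entries that are rational functions of $\al$. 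Because the form of Section~\ref{sec-mt} is emphatically \emph{not} associating on $A_{R_0}$ (as shown there, $(a_{-1},a_2)\neq(a_{-1},a_{-1}a_2)$), the first genuine task is to verify that the extra relations of the cover force $(xy,z)=(x,yz)$ on $\sh{6A_\al}$. This is a finite check over all triples of basis elements, and I expect it to be the main obstacle: it is precisely the step at which the partial form of Section~\ref{sec-mt} must become a true Frobenius form, and for the $8$-dimensional $\sh{6A_\al}$ with its involved rational entries this is a sizeable verification.

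Granting that $\sh{6A_\al}$ is Frobenius, I would compute $\det G(\al)$ in \cite{gap}; its entries carry denominators built from $(2\al-1)$ and $(5\al-2)$. On clearing denominators I expect the numerator of $\det G(\al)$ to factor through $(2\al-3)(7\al-4)(12\al^2-\al-2)$, where the quadratic $12\al^2-\al-2$ has roots $\frac{1}{24}(1\pm\sqrt{97})$, multiplied by a cofactor with no admissible real root. The admissible zeros of $\det G(\al)$ are then exactly $\al=\frac{3}{2},\frac{4}{7},\frac{1}{24}(1\pm\sqrt{97})$, which are the degenerate cases; reassuringly these coincide with the values in the ``Quotients'' column of Table~\ref{tbl-covers}, where $\sh{6A_\al}$ acquires a proper quotient.

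For positive-definiteness I would combine Sylvester's criterion with continuity in $\al$. The number of negative eigenvalues of the real symmetric matrix $G(\al)$ is constant on each maximal open interval free of zeros and poles of the leading principal minors, and $\sh{6A_\al}$ is positive-definite precisely where all eight minors are positive. Accordingly I would compute the leading principal minors in \cite{gap} as rational functions of $\al$, locate their real zeros and poles, and evaluate them at one representative point of each of the three claimed intervals $(\frac{1}{2},\frac{4}{7})$, $(\frac{2}{3},1)$ and $(\frac{1}{24}(1-\sqrt{97}),\frac{1}{24}(1+\sqrt{97}))$ to confirm positivity there, and at sample points of the remaining admissible intervals to confirm the form is indefinite off these ranges. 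The only real delicacy beyond bookkeeping is to track the signs of the minors and of the cleared denominators across the poles $\al=\frac{1}{2},\frac{2}{5},\frac{1}{3}$ and across the interior transitions among the minors, so that the boundaries of the positive-definite region are delimited correctly; once the explicit minors are in hand from \cite{gap} this reduces to a routine sign computation.
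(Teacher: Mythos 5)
Your proposal takes essentially the same route as the paper: the paper's proof likewise induces the form of Section~\ref{sec-mt} on the quotient $\sh{6A_\al}$, verifies by direct computation in \cite{gap} that it is Frobenius, and computes the Gram determinant in \cite{gap}, namely $\frac{-\al^8}{2^{31}(2\al-1)^{17}}(\al-1)^3(3\al-2)(3\al-1)^2(5\al-2)^2(7\al-4)^5(\al^2+4\al-2)^4(12\al^2-\al-2)$, reading off the degenerate values and the positive-definite range from the sign of this expression. The differences are minor: your Sylvester/sample-point step is a slightly more careful version of the paper's bare determinant-sign argument (positivity of the determinant alone does not give definiteness), and your guessed factorisation differs in inessential details from the computed one (the paper's determinant carries extra even-power factors and an odd linear factor $(3\al-2)$ rather than your $(2\al-3)$, a point on which the paper itself is not internally consistent with its statement of the degenerate values), but since your method defers to the actual \cite{gap} output, this does not affect the argument.
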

        \begin{proof}
                We check by direct computation in \cite{gap}
                that the form on $\sh{5A_\al}$
                induced as a quotient of the form in Section \ref{sec-mt}
                is Frobenius.
                We also use \cite{gap}
                to calculate the Gram matrix,
                the determinant of which is
                \begin{equation}
                        \frac{-\al^8}{2^{31}(2\al-1)^{17}}(\al-1)^3(3\al-2)(3\al-1)^2(5\al-2)^2(7\al-4)^5(\al^2+4\al-2)^4(12\al^2-\al-2).
                \end{equation}
                This is positive exactly when $(2\al-1)(\al-1)(3\al-2)(7\al-4)(12\al^2-\al-2)<0$.
                The roots of the equation which are not already ruled out in Theorem \ref{thm-6A}
                correspond to $(\al,\bt)$
                being $(\sfrac{3}{2},\sfrac{-9}{32}),
                (\sfrac{4}{7},\sfrac{-4}{7})$
                and $\al=\sfrac{1}{24}(1\pm\sqrt{97})$.
        \end{proof}

        \begin{lemma}
                \label{lem-6a-miyam}
                On $\sh{6A_\al}$,
                $\size{\tau(a_0)\tau(a_1)} = 3$,
                the flip is an automorphism
                and $\size{\tau(a_0)\tau(a_1)}\leq6$ in any larger algebra.
        \end{lemma}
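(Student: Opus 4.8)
The plan is to read off how the Miyamoto involutions permute the axes and then invoke Lemma~\ref{lem-global-order}. The action of $T=\la\tau(a_0),\tau(a_1)\ra$ on the axes is purely formal: from $a_{2i}=a_0^{\rho^i}$, $a_{2i+1}=a_1^{\rho^i}$ together with the dihedral relation $\rho^{\tau(a_0)}=\rho^{-1}$ one gets $a_i^{\tau(a_0)}=a_{-i}$ and $a_i^{\tau(a_1)}=a_{2-i}$, so $\rho=\tau(a_0)\tau(a_1)$ sends $a_i\mapsto a_{i+2}$. In $\sh{6A_\al}$ the axes satisfy $a_i=a_{i\bmod 6}$ (equivalently $a_0^{\rho^3}=a_0$; this is visible in Table~\ref{tbl-6A} and is confirmed in \cite{gap} from the explicit matrices of $\tau(a_0),\tau(a_1)$, just as for $\sh{5A}$ in Lemma~\ref{lem-5a-miyam}). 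Hence $\rho$ induces the permutation $(a_0\,a_2\,a_4)(a_1\,a_3\,a_5)$, which moves $a_0$ under both $\rho$ and $\rho^2$. Since $\rho^3$ is an automorphism fixing every axis, it also fixes $s=a_0\circ a_1$, $s_2=a_0\circ a_2$ and $s_2^f=a_{-1}\circ a_1$ (the product $\circ$ being built from the multiplication and the $\rho$-fixed scalar $\bt$), so $\rho^3$ fixes the spanning set $B$ and equals $\id$. Therefore $\size{\tau(a_0)\tau(a_1)}=3$.

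For the flip I would define $f$ on the basis of Table~\ref{tbl-6A} by $a_i\mapsto a_{1-i}$, $s\mapsto s$ and $\bar s_2\mapsto\bar s_2$; this is a linear bijection, so it remains to verify $f(xy)=f(x)f(y)$ on basis pairs. Under the standing assumption $\lm_1=\lm_1^f$, $\lm_2=\lm_2^f$, each product in Table~\ref{tbl-6A} is symmetric under the index reflection $i\mapsto 1-i$ (using $a_i=a_{i\bmod 6}$) and involves only the $f$-fixed elements $s,\bar s_2$, so $f$ preserves every product. This is the verification I would carry out in \cite{gap}, concluding $f\in\Aut(\sh{6A_\al})$.

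Finally, let $A$ be any $\Phi(\al,\bt)$-axial algebra in which $a_0,a_1$ generate $\sh{6A_\al}$. As $T$ preserves this subalgebra and acts there with $\rho^3=\id$, the relation $a_0^{\rho^3}=a_0$ holds in $A$ as well, so the orbit set $a_0^T\cup a_1^T=\{a_0,\dots,a_5\}$ has exactly six elements. Lemma~\ref{lem-global-order} then gives $\rho^{\size{a_0^T\cup a_1^T}}=\rho^6=1$ as an automorphism of all of $A$, whence $\size{\tau(a_0)\tau(a_1)}\le6$. I expect the flip to be the one genuine obstacle: unlike the order of $\rho$ it is not handed to us by Lemma~\ref{lem-global-order}, since the reflection $i\mapsto 1-i$ lies outside $T$ (which here is only $D_3$), so it rests entirely on the $f$-invariance of every structure constant in Table~\ref{tbl-6A}.
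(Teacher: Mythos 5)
Your proposal is correct and essentially reproduces the paper's argument: the paper likewise computes the explicit action of $\tau(a_0),\tau(a_1)$ on $\sh{6A_\al}$, verifies that $(\tau(a_0)\tau(a_1))^3$ is the identity, and obtains the bound in any larger algebra from Lemma~\ref{lem-global-order} with $\size{a_0^T\cup a_1^T}=6$; your departures are cosmetic (deducing $\rho^3=\id$ from its fixing the generators rather than cubing the matrix, and checking the flip explicitly via the $i\mapsto 1-i$ symmetry of Table~\ref{tbl-6A}, a claim the paper's proof does not spell out at all). One detail your ``just as for $\sh{5A}$'' analogy misses: $\tau(a_1)$ is \emph{not} a permutation matrix on the basis of Theorem~\ref{thm-6A}, since it sends $a_{-1}$ to the nontrivial combination $a_3=a_{-2}+a_0+a_2-a_{-1}-a_1+4\frac{(1-2\al)}{\al(3\al-1)}(s_2-s_2^f)$, so your cycle $(a_0\,a_2\,a_4)(a_1\,a_3\,a_5)$ is a permutation of the six axes but not of basis vectors --- which does not affect your argument, as it only uses the action on the axis set.
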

        \begin{proof}
                On $\sh{6A_\al}$,
                $\tau(a_0)$ is the permutation matrix of $(1,5)(2,4)(3)(6)(7)(8)$
                and $\tau(a_1)$ fixes $a_1$ and $a_{-2},s,s_2,s_2^f$,
                $a_{-1}$ is mapped to
                \begin{equation}
                        a_3 = a_{-2}+a_0+a_2 - a_{-1} - a_1 + 4\frac{(1-2\al)}{\al(3\al-1)}(s_2-s_2^f),
                \end{equation}
                and swaps $a_0$ with $a_2$.
                Therefore, for $\kp = 4\frac{1-2\al}{\al(3\al-1)}$,
                \begin{equation}
                        \renewcommand{\arraystretch}{0.9}
                        \tau(a_0)\tau(a_1) =
                        \begin{pmatrix}
                                 0 & 0 & 1 & 0 & 0 & 0 & 0 & 0 \\
                                 0 & 0 & 0 & 1 & 0 & 0 & 0 & 0 \\
                                 0 & 0 & 0 & 0 & 1 & 0 & 0 & 0 \\
                                 1 & -1 & 1 & -1 & 1 & 0 & \kp & -\kp \\
                                 1 & 0 & 0 & 0 & 0 & 0 & 0 & 0 \\
                                 0 & 0 & 0 & 0 & 0 & 1 & 0 & 0 \\
                                 0 & 0 & 0 & 0 & 0 & 0 & 1 & 0 \\
                                 0 & 0 & 0 & 0 & 0 & 0 & 0 & 1
                        \end{pmatrix},
                \end{equation}
                and $(\tau(a_0)\tau(a_1))^3$ is the identity matrix.
                By Lemma \ref{lem-global-order},
                $\tau(a_0)\tau(a_1)$ has order at most $\size{a_0^T\cup a_1^T} = 6$.
        \end{proof}

\section{The covers of $\sh{4A},\sh{4B}$}
        \label{sec-4X}

        The third factor in \eqref{eq-mother} is $0$ in $\sh{4A}$ and in $\sh{4B}$
        and therefore, in these cases, as $\al\neq1$,
        \begin{equation}
                \label{eq-case-3}
                \lm_2 = \frac{1}{\al\bt}(4(2\al-1)\lm^2-2(\al^2+6\bt\al-4\bt)\lm+2\al^2\bt+4(\al-1)\bt^2).
        \end{equation}

        \begin{lemma}
                \label{lem-4B-badman}
                If $(\al,\bt,\lm)$ is not a root of \eqref{eq-4B-badman},
                then $a_0 = a_4$.
        \end{lemma}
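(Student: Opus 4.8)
The plan is to show that, once the defining $\sh{4A},\sh{4B}$ relation~\eqref{eq-case-3} (the vanishing of the third factor of the mother relation~\eqref{eq-mother}) is imposed, the dihedral orbit of $a_0$ closes at period four, that is, $a_0=a_4=a_0^{\rho^2}$. The essential tool is Lemma~\ref{lem-t-stab}: the span $RB$ is $T$-stable and already contains an explicit expression for $a_3$. Since $a_4=a_0^{\rho^2}=a_2^{\rho}=\bigl(a_2^{\tau(a_0)}\bigr)^{\tau(a_1)}=a_{-2}^{\tau(a_1)}$, the element $a_4$ also lies in $RB$, and the whole question reduces to comparing the two vectors $a_4$ and $a_0$ in the free module on $B$.

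First I would write $a_4$ explicitly in the basis $B$. By Lemma~\ref{lem-eigvect-ising}, $\tau(a_0)$ negates the $\bt$-eigenvectors $y=a_1-a_{-1}$ and $y_2=a_2-a_{-2}$, so it acts by $a_i\mapsto a_{-i}$ and fixes $s,s_2,s_2^f$; interchanging the roles of $a_0,a_1$ in Lemma~\ref{lem-eigvect-ising} gives the $\bt$-eigenvectors $a_0-a_2$ and $a_{-1}-a_3$ of $a_1$, so $\tau(a_1)$ swaps $a_0\leftrightarrow a_2$ and $a_{-1}\leftrightarrow a_3$ while fixing $a_1,s,s_2,s_2^f$. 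Because $a_3$ has coefficient $1$ on $a_{-2}$ in Lemma~\ref{lem-t-stab}, the involutive relation $a_{-1}=(a_3)^{\tau(a_1)}$ can be solved for the single missing image $a_{-2}^{\tau(a_1)}$, which expresses $a_4\in RB$ entirely in terms of $\al,\bt$ and the coefficient functions. I would then substitute~\eqref{eq-case-3} to eliminate $\lm_2$, so that $a_4-a_0$ becomes a vector whose coordinates are rational functions of $\al,\bt,\lm$ alone.

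The decisive step is to simplify $a_4-a_0$ and isolate the obstruction. The denominators appearing here are the invertible quantities $\al,\bt,\al-\bt,\al-2\bt,\al-4\bt$ of~\eqref{eq r0}, which may be cleared; I expect the genuinely non-invertible content that survives to be the polynomial $P$ of~\eqref{eq-4B-badman}, so that one is left with an identity $P\cdot(a_4-a_0)=0$ in $A_R$. Granting this, the conclusion is immediate: the algebra is everywhere faithful over $R$, so a nonzero element of $R$ has trivial annihilator on $A_R$. Hence if $(\al,\bt,\lm)$ is not a root of~\eqref{eq-4B-badman}, then $P\neq0$, and $P\cdot(a_4-a_0)=0$ forces $a_4-a_0=0$, i.e. $a_0=a_4$.

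The main obstacle is the sheer size of the polynomial algebra: computing $a_{-2}^{\tau(a_1)}$, substituting the rational expression~\eqref{eq-case-3} for $\lm_2$, and then factoring the resulting coordinates of $a_4-a_0$ to exhibit $P$ as the common content. As in Sections~\ref{sec-5A} and~\ref{sec-6A}, I would delegate these manipulations to~\cite{gap}; the only conceptual care needed is (i) getting the action of $\tau(a_1)$ on all of $B$ correct, in particular the non-obvious image $a_{-2}^{\tau(a_1)}$ recovered from the involution, and (ii) identifying $P$ precisely, so that the everywhere-faithful argument applies cleanly and the excluded locus $\{P=0\}$ of~\eqref{eq-4B-badman} is exactly where period-four closure can fail and must be handled separately.
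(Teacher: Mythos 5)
Your bookkeeping is correct as far as it goes: $\tau(a_1)$ does fix $a_1,s,s_2,s_2^f$ and swap $a_0\leftrightarrow a_2$, $a_{-1}\leftrightarrow a_3$, so $a_4=a_{-2}^{\tau(a_1)}$ can indeed be solved out of \eqref{eq-a3}, and proving $a_0=a_4$ is equivalent, via $\tau(a_1)$, to proving $a_2=a_{-2}$. The fatal problem is your decisive step. The algebra $A_R$ of Theorem \ref{thm-mt} is a \emph{free} $R$-module on $B$ over a domain (everywhere faithful), and $a_4-a_0=(a_{-2}-a_2)^{\tau(a_1)}$ is a \emph{nonzero} vector there, since $a_{-2}-a_2$ is a difference of basis elements and $\tau(a_1)$ is an invertible substitution. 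Hence no identity $P\cdot(a_4-a_0)=0$ with $P\neq0$ can hold in $A_R$: by freeness it would force $a_4-a_0=0$ identically, making the hypothesis on \eqref{eq-4B-badman} vacuous --- contradicting both the fact that $a_4=a_0$ fails in $A_R$ itself and the paper's later need to check non-vanishing of \eqref{eq-4B-badman} before concluding $a_{-2}-a_2\in I_{\sh{4A}}$ in Lemma \ref{lem-4A-1/4}. More conceptually: any computation carried out purely from the multiplication table (plus the substitution \eqref{eq-case-3}) only produces identities valid in $A_R$, whereas $a_0=a_4$ is \emph{not} an identity of $A_R$; it is an element of the ideal cut out by the fusion rules in an honest axial quotient, and your argument never invokes those fusion rules.

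That is the missing idea, and it is exactly what the paper's proof supplies. Set $k=a_2-a_{-2}$; this is the $\bt$-eigenvector $y_2$ of $a_0$ from Lemma \ref{lem-eigvect-ising}. In an actual $\Phi(\al,\bt)$-dihedral algebra the fusion rule $\bt\star\bt\subseteq\{1,0,\al\}$ forces $kk\in A^{a_0}_{\{1,0,\al\}}$. One computes $kk$ from the multiplication table, notes that the coefficients of $a_{-1}$ and $a_1$ in $a_2a_{-2}$ agree (so only seven parameters occur), and writes
\begin{equation*}
        \kp k \;=\; \kp_1 a_0 + \kp_z z + \kp_{zz}\,zz + \kp_{z_2} z_2 + \kp_x x + \kp_{x_2} x_2 + kk ,
\end{equation*}
a genuine identity in $A_R$ obtained by linear algebra (the only division being by $zz\vert_{s_2^f}=-2\al(\al-\bt)(\al-2\bt)$, which is invertible). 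In the axial quotient the right-hand side lies in $A^{a_0}_{\{1,0,\al\}}$ while the left-hand side lies in $A^{a_0}_\bt$, and semisimplicity gives $A^{a_0}_\bt\cap A^{a_0}_{\{1,0,\al\}}=0$, so $\kp k=0$; the scalar $\kp$ is precisely \eqref{eq-4B-badman}, and everywhere-faithfulness kills $k$ whenever $\kp\neq0$. Applying $\tau(a_1)$ then gives $a_0=a_4$. If you wanted to salvage your route, you would have to show that $P\cdot(a_4-a_0)$ lies in the ideal generated by the fusion-rule failures --- which is the same fusion-rule input you omitted, in disguise.
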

        \begin{proof}
                Our aim is first to show that
                $a_2 = a_{-2}$,
                by showing that
                \begin{equation}
                        k = a_2 - a_{-2} \in A^{a_0}_{1,0,\al}\oplus\la kk\ra,
                \end{equation}
                that is,
                we find constants $\kp,\kp_1,\kp_z,\kp_{zz},\kp_{z_2},\kp_x,\kp_{x_2}$
                such that
                \begin{equation}
                        \kp k = \kp_1 a_0 + \kp_z z + \kp_{zz} zz + \kp_{z_2} z_2 + \kp_x x + \kp_{x_2} x_2 + kk,
                \end{equation}
                which implies that $\kp k = 0$,
                because $k\in A^{a_0}_\bt$
                so $kk\in A^{a_0}_{1,0,\al}$
                and the semisimplicity of $a_0$ implies that $A^{a_0}_\bt\cap A^{a_0}_{1,0,\al} = 0$.

                We multiply $kk$ using the multiplication table.
                That suitable $\kp,\kp_1,\kp_z,\kp_{zz},\kp_{z_2},\kp_x,\kp_{x_2}$ exist
                follows from the fact that
                the coefficients of $a_{-1}$ and $a_1$ in $a_2a_{-2}$ are equal,
                so that there are only $7$ parameters;
                now the chosen set is linearly independent in those seven variables.
                Even though the actual equations are lengthy,
                the working is straightforward:

                \begin{align*}
                        & \kp_{zz} = - \frac{kk\vert_{s_2^f}}{zz\vert_{s_2^f}}, \\
                        & kk' = kk + \kp_{zz}zz, \\
                        & \kp_{z_2} - \kp_{x_2} = kk'\vert_{s_2}, \\
                        & \frac{1}{2}(\al-\bt)\kp_{z_2} + \frac{1}{2}\bt\kp_{x_2} = -\frac{1}{2}(kk'\vert_{a_{-2}} + kk'\vert_{a_2}), \\
                        & \kp_{x_2} = \frac{2}{\al}(-\frac{1}{2}(kk'\vert_{a_{-2}} + kk'\vert_{a_2})-\frac{1}{2(\al-\bt)}kk'\vert_{s_2}), \\
                        & \kp_{z_2} = \kp_{x_2} + kk'\vert_{s_2}, \\
                        & kk'' = kk + \kp_{z_2}z_2 + \kp_{x_2}x_2, \\
                        & \kp_z - \kp_x = kk''\vert_s, \\
                        & \frac{1}{2}(\al-\bt)\kp_z + \frac{1}{2}\bt\kp_x = -\frac{1}{2}(kk''\vert_{a_{-1}} + kk''\vert_{a_1}), \\
                        & \kp_x = \frac{2}{\al}(-\frac{1}{2}(kk''\vert_{a_{-1}} + kk''\vert_{a_1})-\frac{1}{2(\al-\bt)}kk''\vert_s), \\
                        & \kp_z = \kp_x + kk''\vert_s, \\
                        & kk''' = kk'' + \kp_z z + \kp_x x, \\
                        & \kp_1 = kk'''\vert_{a_0}.
                \end{align*}
                Note that the only division is by $zz\vert_{s_2^f} = -2\al(\al-\bt)(\al-2\bt)$,
                which is invertible by assumption.
                Then,
                \begin{equation}
                        \label{eq-4B-badman}
                        \begin{aligned} \kp = \;
                                & \frac{-2(-2(\al^{2} - 6\al\bt - \al + 4\bt)\lm + 10\al\bt^{2} - 4\bt^{3} + \al\bt - 6\bt^{2})}{\al\bt^2(\al-\bt)^2(\al-2\bt)(\al-4\bt)^3}\cdot \\
& \Bigl(-(8(\al - 1)(\al - 2\bt)^{2}(3\al^{2} - 8\al\bt - \al + 4\bt)\lm^{2} - 2(4\al^{6} - \al^{5}\bt - 96\al^{4}\bt^{2} + 2^{2}73\al^{3}\bt^{3} \\
& - 2^{8}\al^{2}\bt^{4} - 6\al^{5} + 7\al^{4}\bt + 2\cdot3\cdot19\al^{3}\bt^{2} - 2^{7}3\al^{2}\bt^{3} + 2^{4}23\al\bt^{4} + 2\al^{4} - 2\al^{3}\bt - 40\al^{2}\bt^{2} \\
& + 2^{7}\al\bt^{3} - 2^{7}\bt^{4})\lm + 8\al^{6}\bt - 22\al^{5}\bt^{2} - 20\al^{4}\bt^{3} + 2^{2}3\cdot11\al^{3}\bt^{4} - 2^{3}3\cdot5\al^{2}\bt^{5} - 32\al\bt^{6} \\
& - 8\al^{5}\bt + 5\al^{4}\bt^{2} + 2^{2}31\al^{3}\bt^{3} - 2^{4}23\al^{2}\bt^{4} + 2^{5}11\al\bt^{5} + 2\al^{4}\bt + 4\al^{3}\bt^{2} - 56\al^{2}\bt^{3} + 2^{7}\al\bt^{4} \\
& - 2^{7}\bt^{5})\lm_2 + 16(\al - 1)(\al^{4} - 8\al^{3}\bt + 12\al^{2}\bt^{2} - \al^{3} + 14\al^{2}\bt - 16\al\bt^{2} - 4\al\bt + 4\bt^{2})\lm^{3} \\
& - 4(3\al^{5}\bt - 60\al^{4}\bt^{2} + 2^{2}3\cdot11\al^{3}\bt^{3} -48\al^{2}\bt^{4} + 3\al^{5} - 17\al^{4}\bt + 2\cdot103\al^{3}\bt^{2} - 2^{3}3\cdot13\al^{2}\bt^{3} \\
& + 64\al\bt^{4} - 5\al^{4} + 28\al^{3}\bt - 2^{2}53\al^{2}\bt^{2} + 2^{2}5\cdot11\al\bt^{3} - 16\bt^{4} + 2\al^{3} - 12\al^{2}\bt + 64\al\bt^{2} - 48\bt^{3})\lm^{2} \\
& - 2\bt(14\al^{5}\bt - 8\al^{4}\bt^{2} - 56\al^{3}\bt^{3} + 32\al^{2}\bt^{4} - 11\al^{5} + 33\al^{4}\bt - 2\cdot3^{2}17\al^{3}\bt^{2} + 2^{3}59\al^{2}\bt^{3} -\\
& 2^{7}\al\bt^{4} + 14\al^{4} - 80\al^{3}\bt + 2^{6}7\al^{2}\bt^{2} - 2^{6}7\al\bt^{3} + 64\bt^{4} - 6\al^{3} + 40\al^{2}\bt - 2^{3}19\al\bt^{2} + 96\bt^{3})\lm\\
& + 2\al^{5}\bt^{2} - 28\al^{4}\bt^{3} - 60\al^{3}\bt^{4} + 2^{3}17\al^{2}\bt^{5} - 32\al\bt^{6} + 5\al^{4}\bt^{2} - 48\al^{3}\bt^{3} + 2^{2}7\cdot11\al^{2}\bt^{4}\\
& - 2^{6}5\al\bt^{5} + 64\bt^{6} - 4\al^{3}\bt^{2} + 32\al^{2}\bt^{3} - 2^{4}7\al\bt^{4} + 64\bt^{5}\Bigr).
        \end{aligned}
                \end{equation}
                and it's an irreducible polynomial.
                So assuming that $(\al,\bt,\lm)$ are not a root of \eqref{eq-4B-badman},
                we have that $a_2 = a_{-2}$.

                By applying $\tau(a_1)$ to both sides,
                it follows that $a_2^{\tau(a_1)} = a_{-2}^{\tau(a_1)}$,
                that is, $a_0 = a_4$.
        \end{proof}

        Now $a_2^{\tau(a_0)} = a_{-2} = a_2$,
        so that $a_2\in A^{a_0}_{1,0,\al}$,
        and likewise $a_0\in A^{a_2}_{1,0,\al}$
        and $a_{-1}\in A^{a_1}_{1,0,\al},a_1\in A^{a_{-1}}_{1,0,\al}$.
        Therefore we may apply Theorem \ref{thm-hrs-sak}
        to the subalgebras $\la a_0,a_2\ra, \la a_1,a_{-1}\ra$.
        In particular,
        either $\lm_2 = \frac{\al}{2}$
        or $\lm_2 = 0$,
        corresponding to $\lm_2 - \frac{\al}{2}\in J_{\sh{4B}}$
        and $\lm_2 \in J_{\sh{4A}}$ respectively.
        We take the two cases separately.

        \begin{lemma}
                \label{lem-4B-lm}
                $\bt - \frac{\al^2}{2}$ and $\lm - \frac{\bt}{2}$ are in $J_{\sh{4B}}$.
        \end{lemma}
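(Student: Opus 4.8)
The plan is to combine the two relations already available with one further consequence of the fusion rules, exactly in the spirit of Section~\ref{sec-6A}. We are in the branch $\lm_2 - \tfrac{\al}{2} \in J_{\sh{4B}}$, and the third factor of~\eqref{eq-mother} vanishes, which is~\eqref{eq-case-3}. Substituting $\lm_2 = \tfrac{\al}{2}$ into~\eqref{eq-case-3} and clearing the unit $\al\bt$ gives the single relation
\[
        P := 4(2\al-1)\lm^2 - 2(\al^2+6\al\bt-4\bt)\lm + \tfrac{3}{2}\al^2\bt + 4(\al-1)\bt^2 \in J_{\sh{4B}},
\]
where $\lm = \lm_1$. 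This relates $\lm$ and $\bt$ but pins down neither, so a second relation is needed.

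For the missing relation I would use that, by Lemma~\ref{lem-4B-badman}, $a_2 = a_{-2}$ on the component we work in (the factor~\eqref{eq-4B-badman} not vanishing at the Norton--Sakuma point). Then the eigenvectors of Lemma~\ref{lem-eigvect-ising} collapse to $x_2 = (\bt-\lm_2)a_0 + \bt a_2 + s_2$ and $z_2 = ((1-\al)\lm_2-\bt)a_0 + (\al-\bt)a_2 - s_2$. I would impose the fusion rule $\al\star\al = \{1,0\}$ on the $\al$-eigenvectors $x, x_2$ by computing $a_0(xx_2)$ from the multiplication table of Theorem~\ref{thm-mt} (with $\lm_2 = \tfrac{\al}{2}$ substituted) and reading off the coefficient of $a_2$, which the fusion rules force to be $0$; this is precisely the computation carried out for $\sh{6A}$ in Section~\ref{sec-6A}. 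Eliminating $\lm$ between this coefficient and $P$ (e.g.\ via a resultant) yields a polynomial in $\al,\bt$ that factors, and since $J_{\sh{4B}}$ is a prime ideal which, by the coprimality remark following Lemma~\ref{lem cover smallest factor}, can contain only the factor vanishing at $(\al,\bt,\lm) = (\tfrac14,\tfrac1{32},\tfrac1{64})$, I select that factor to obtain $\bt - \tfrac{\al^2}{2} \in J_{\sh{4B}}$, the remaining factors being excluded by specialisation.

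With $\bt - \tfrac{\al^2}{2} \in J_{\sh{4B}}$ in hand, $P$ is congruent modulo $J_{\sh{4B}}$ to its value at $\bt = \tfrac{\al^2}{2}$, which factors as
\[
        4(2\al-1)\Bigl(\lm - \tfrac{\al^2}{4}\Bigr)\Bigl(\lm - \tfrac{\al^2(4\al-1)}{4(2\al-1)}\Bigr) \in J_{\sh{4B}}.
\]
Since $R/J_{\sh{4B}}$ is a domain in which $4(2\al-1)\neq0$ (as $\al\neq\tfrac12$, this value being excluded by invertibility of $\al-4\bt=\al(1-2\al)$ when $\bt=\tfrac{\al^2}2$), one of the two linear factors lies in $J_{\sh{4B}}$; the second specialises to $\lm = 0$ at $\al = \tfrac14$, contradicting $\lm = \tfrac1{64}$, so it must be the first. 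Hence $\lm - \tfrac{\al^2}{4} \in J_{\sh{4B}}$, and as $\tfrac{\al^2}{4} \equiv \tfrac{\bt}{2}$ modulo $J_{\sh{4B}}$ this is exactly $\lm - \tfrac{\bt}{2} \in J_{\sh{4B}}$.

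I expect the main obstacle to be producing the second relation: one must verify which product is forced into which eigenspace after the collapse $a_2 = a_{-2}$, carry out the lengthy multiplication in $A_R$ (best delegated to \cite{gap}), eliminate $\lm$ against $P$, and factor the outcome cleanly enough that the component through $(\tfrac14,\tfrac1{32})$ can be isolated. The concluding step using $P$ is then the routine factorisation displayed above.
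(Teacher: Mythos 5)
Your proposal has the same skeleton as the paper's proof: work after the collapse $a_2 = a_{-2}$ (Lemma \ref{lem-4B-badman}) in the branch $\lm_2 - \tfrac{\al}{2}\in J_{\sh{4B}}$, take the quadratic $P$ in $\lm$ coming from \eqref{eq-case-3}, produce a second fusion-rule relation, eliminate $\lm$ by a resultant, and keep the unique factor vanishing at $(\al,\bt,\lm)=(\tfrac14,\tfrac1{32},\tfrac1{64})$. Your concluding step is correct and in fact cleaner than the paper's: the factorisation
\begin{equation*}
        P\big|_{\bt=\al^2/2} \;=\; 4(2\al-1)\Bigl(\lm-\tfrac{\al^2}{4}\Bigr)\Bigl(\lm-\tfrac{\al^2(4\al-1)}{4(2\al-1)}\Bigr)
\end{equation*}
checks out, $2\al-1$ is a unit modulo $\bt-\tfrac{\al^2}{2}$ as you say, and the primality-plus-specialisation argument correctly discards the second root (the paper instead compares the root sets of its two quadratics, and its stated common factor ``$4\lm-2\al^2$'' is evidently a misprint for $4\lm-\al^2$).

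The gap is in your second relation. You propose to read off the coefficient of $a_2$ in $a_0(xx_2)$ and declare it zero ``by the fusion rules,'' transplanting the method of Section \ref{sec-6A}. That inference is only legitimate when the spanning set has linearly independent image in the target algebra, which is exactly what makes $\sh{6A}$ special: $\dim\sh{6A}=8$ equals $\size{B}$, which is why the paper can assert $\bar I_{\sh{6A}}=\bar J_{\sh{6A}}\hat U$ and treat coefficients individually. But $\sh{4B}$ is $5$-dimensional, so the spanning vectors remaining after the collapse satisfy nontrivial module relations: in $\sh{4B}$ itself $s_2 = -\tfrac18(a_1+a_{-1})-8s-\tfrac1{32}(a_0+a_2)$, and in the cover the whole subspace $K$ of Lemma \ref{lem-4B-kernel} dies, e.g.\ $\al^3(a_0+a_2)+\al^2(a_{-1}+a_1)+4s+2\al s_2=0$ there with unit coefficients. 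Consequently the fusion rule only forces $a_0(xx_2)\in\la a_0\ra$ \emph{modulo module relations that are unknown at this stage}; an individual spanning-set coefficient such as that of $a_2$ is not forced into $J_{\sh{4B}}$, and it is correspondingly unclear that your resultant would isolate $2\bt-\al^2$ at all. The paper's proof dodges precisely this: it applies $0\star0=\{0\}$ to get $a_0(zz_2)=0$, and after the collapse this expression is a single ring element times the single vector $a_{-1}-a_1$; since $a_{-1}\neq a_1$ in $\sh{4B}$, everywhere-faithfulness alone (no freeness or independence assumption) forces that ring element, a quadratic in $\lm$, into $J_{\sh{4B}}$. Its resultant with $P$ is $8(2\al-1)\bt(2\bt-1)(2\bt-\al^2)(2\bt+\al^2-1)\cdot(4\al^3-(18\bt+1)\al^2+4\bt(4\bt+1)\al-4\bt^2)$, whose only admissible factor is $2\bt-\al^2$. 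If you replace your $xx_2$ step by this $zz_2$ computation, the rest of your argument goes through as written.
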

        \begin{proof}
                After making the substitution $a_{-2} = a_2$,
                we calculate $0 = a_0(zz_2) = $
                \begin{equation}
                        \begin{aligned}
                                = \frac{1}{4}\frac{(\al - \bt)}{(\al - 4\bt)}
                                \Bigl( (8(3\bt - 1)(2\al - 1)\lm^{2} - 2(3(4\bt - 1)\al^{2} + (2\bt - 1)(10\bt - 1)\al - 12\bt^{2} + 4\bt)\lm & \\
                                + 2\bt\al^{3} + \bt(11\bt - 3)\al^{2} + \bt(8\bt^{2} - 12\bt + 1)\al - 4\bt^{3} + 2\bt^{2})\Bigr) (a_{-1} - a_1)&,
                        \end{aligned}
                \end{equation}
                and in particular there is no linear dependence between $a_{-1}$ and $a_1$ in $\sh{4B}$,
                so the coefficient must be $0$.
                Therefore we have a quadratic formula for $\lm$.
                We also have a different quadratic formula for $\lm$
                from combining \eqref{eq-case-3} and $\lm_2 = \frac{\al}{2}$.
                The resultant of the numerators of the two quadratics
                with respect to $\lm$ is
                \begin{equation}
                        8(2\al-1)\bt(2\bt-1)(2\bt-\al^2)(2\bt+\al^2-1)(4\al^{3} - (18\bt + 1)\al^{2} + 4\bt(4\bt + 1)\al - 4\bt^{2}),
                \end{equation}
                and the only factor which is $0$ when $(\al,\bt)=(\frac{1}{4},\frac{1}{32})$
                is $2\bt-\al^2$,
                that is, $\bt = \frac{\al^2}{2}$.
                Now substituting $\bt = \frac{\al^2}{2}$
                back into both of the quadratic formulae for $\lm$
                yields that the only common factor is $4\lm-2\al^2$,
                which is also the only factor of either which is $0$ in $\sh{4B}$.
                This gives $\lm = \frac{\al^2}{4}$.
        \end{proof}

        \begin{lemma}
                \label{lem-4B-kernel}
                If $\bt = \frac{\al^2}{2}$ and $\lm = \frac{\al^2}{4}$
                then the subspace $K$ spanned by
                \begin{equation}
                        \begin{aligned}
                                y_2 & = a_2 - a_{-2} \in A^{a_0}_\bt \\
                                x & = \al^3(a_0+a_2) + \al^2(a_{-1}+a_1) + 4s + 2\al s_2 \in A^{a_0}_\al \\
                                x^f & = \al^2(a_0+a_2) + \al^3(a_{-1}+a_1) + 4s + 2\al s_2^f \\
                        \end{aligned}
                \end{equation}
                is killed by the fusion rules.
        \end{lemma}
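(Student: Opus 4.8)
The plan is to prove $K \subseteq \bar I^R_{\sh{4B}}$ by treating the three spanning vectors separately. For $y_2 = a_2 - a_{-2}$ this is immediate: Lemma~\ref{lem-4B-badman} forces $a_2 = a_{-2}$ in the cover, so $y_2 = 0$ and there is nothing more to check for it.

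For the $\al$-eigenvector $x$ I would use the fusion rule $\al\star\al = \{1,0\}$: a product of two $\al$-eigenvectors must have vanishing $\al$-component, so any $\al$-component appearing in $A_R$ is a relation of $\bar I^R_{\sh{4B}}$. Write $v_1, v_2$ for the $\al$-eigenvectors called $x, x_2$ in Lemma~\ref{lem-eigvect-ising} (renamed to avoid a clash with the present $x$). Substituting $\bt = \tfrac{\al^2}{2}$, $\lm = \tfrac{\al^2}{4}$ (Lemma~\ref{lem-4B-lm}), $\lm_2 = \tfrac{\al}{2}$ (the $\sh{4B}$ branch) and $a_{-2} = a_2$, a direct check gives $v_1 = \tfrac{\al^2}{4}a_0 + \tfrac{\al^2}{4}(a_1+a_{-1}) + s$ and $v_2 = \tfrac{\al^2-\al}{2}a_0 + \tfrac{\al^2}{2}a_2 + s_2$, whence the present $x$ equals $4v_1 + 2\al v_2 \in A^{a_0}_\al$. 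The key step is then to compute $(v_1 v_2)_\al$ from the multiplication table of Theorem~\ref{thm-mt}, in \cite{gap}, and to verify that under these substitutions it is a nonzero scalar multiple of $4v_1 + 2\al v_2 = x$; this places $x \in \bar I^R_{\sh{4B}}$.

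The vector $x^f$ is obtained from $x$ by the flip interchanging $a_0$ and $a_1$, and is an $\al$-eigenvector of $a_1$. Under the standing assumptions $\lm_1 = \lm_1^f$ and $\lm_2 = \lm_2^f$ the whole $\sh{4B}$ configuration is invariant under this interchange, so running the previous paragraph with the roles of $a_0$ and $a_1$ swapped—using $a_1$'s copy of the rule $\al\star\al = \{1,0\}$—shows $x^f \in \bar I^R_{\sh{4B}}$. Therefore $K = \la y_2, x, x^f\ra$ is killed by the fusion rules.

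I expect the main obstacle to be purely computational: confirming that $(v_1 v_2)_\al$ is a nonzero multiple of the specific combination $4v_1 + 2\al v_2$, and not of some other vector of $A^{a_0}_\al$, which would contradict the stated form of $x$. Producing the exact ratio $4 : 2\al$ requires the fully specialised structure constants of Section~\ref{sec-mt} and is best discharged in \cite{gap}; the only conceptual inputs are the identity $x = 4v_1 + 2\al v_2$ together with the choice of $\al\star\al = \{1,0\}$ as the operative fusion rule. (Any other rule forbidding an $\al$-component, such as $0\star0 = \{0\}$ applied to the eigenvectors $z, z_2$, would serve equally well.)
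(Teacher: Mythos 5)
Your strategy is genuinely different from the paper's, and it has real gaps. The paper does not hunt for individual fusion-rule products landing on the spanning vectors of $K$; instead it computes the action of $\ad(a_2)$ (the \emph{second} generator's axis, not $a_0$) on $K$, observes that $K$ is $\ad(a_2)$-invariant, and argues that in any axial quotient a nonzero image of $K$ would have to contain an $\ad(a_2)$-eigenvector with eigenvalue in $\{1,0,\al,\sfrac{\al^2}{2}\}$: the eigenvalue $1$ is excluded by primitivity since $a_2\notin K$, and for $\mu\in\{0,\al,\sfrac{\al^2}{2}\}$ the necessary conditions $\det(\ad(a_2)\vert_K-\mu I_3)=0$ are exactly the three polynomials of \eqref{eq-4B-no-kernel}. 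This kills the whole three-dimensional space at once and, crucially, produces the explicit exceptional set of $\al$ that Theorem \ref{thm-4B} and Table \ref{tbl-covers} later quote.

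Measured against this, your proposal has three concrete gaps. First, the $y_2$ step: Lemma \ref{lem-4B-badman} has a hypothesis --- that $(\al,\bt,\lm)$ is not a root of \eqref{eq-4B-badman} --- and you never verify it under the specialisation $\bt=\sfrac{\al^2}{2}$, $\lm=\sfrac{\al^2}{4}$, $\lm_2=\sfrac{\al}{2}$. Since every admissible ring ideal ${J'}^R_{\sh{4B}}$ lies inside $J^R_{\sh{4B}}$, what is needed is the (finite, but unperformed) check that \eqref{eq-4B-badman} does not vanish at the $\sh{4B}$ point $(\al,\bt,\lm,\lm_2)=(\sfrac{1}{4},\sfrac{1}{32},\sfrac{1}{64},\sfrac{1}{8})$ --- the analogue of what Lemma \ref{lem-4A-1/4} does on the $\sh{4A}$ side; without it the appeal to Lemma \ref{lem-4B-badman} is unjustified. (Note also that the exact identity is $x = 4v_1+2\al v_2+\frac{\al^3}{2}y_2$, so your $x$ step is valid only modulo the $y_2$ step.) Second, and more seriously, the step for $x$ is a plan rather than a proof: you defer to \cite{gap} the claim that $(v_1v_2)_\al$ is a \emph{nonzero} multiple of $4v_1+2\al v_2$, and nothing in your argument rules out that this projection is identically zero, in which case the relation is vacuous. (Proportionality itself is automatic when the projection is nonzero, since $K\cap A^{a_0}_\al$ is spanned by $4v_1+2\al v_2$; the vanishing is the real risk. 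Your fallback via $0\star0$ is equally unverified --- indeed in Lemma \ref{lem-4B-lm} the failure of $0\star0$ on $zz_2$ lands on $a_{-1}-a_1$, a $\bt$-eigenvector, not on anything in $A^{a_0}_\al$, so which component of which product is nonzero cannot be guessed in advance.) Third, no exceptional set for $\al$ appears anywhere in your write-up, yet one is unavoidable: whatever relations your computations produce will have coefficients vanishing for finitely many algebraic values of $\al$, just as the paper's determinants vanish at the roots of \eqref{eq-4B-no-kernel}, and for those $\al$ the conclusion is not established. Even if your computations came out as hoped, you would prove the statement only for an unspecified generic set of $\al$, and you would not recover \eqref{eq-4B-no-kernel}, on which Theorem \ref{thm-4B} and Table \ref{tbl-covers} depend.
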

        \begin{proof}
                $K$ is closed under the action of $\ad(a_2)$:
                \begin{equation}
                        \begin{aligned}
                                & (\al-1)(2\al-1)^2a_2y_2 =
                                        \frac{1}{2}(\al-3)(2\al-1)\al^2y_2
                                        - 2 x
                                        + 2(\al-2)(\al-1)x^f \\
                                & \begin{aligned}
                                        (\al-1)(2\al-1)^2a_2x & =
                                        \frac{1}{4}\al^5(\al+1)(2\al-1)y_2
                                        + \al(\al^4-\al^3+3\al^2-3\al+1)x \\
                                        & \qquad - \al(\al-1)^2(\al^2-\al+1)x^f
                                        \end{aligned} \\
                                & \begin{aligned}
                                        (\al-1)(2\al-1)^2a_2x^f & =
                                        -\frac{1}{4}\al^3(2\al-1)(\al^3-6\al^2+6\al-2)y_2 \\
                                        & \qquad -\frac{1}{2}\al(2\al^4-10\al^3+9\al^2-\al-1)x
                                        +\al(\al-1)^3(\al+1)x^f.
                                        \end{aligned} \\
                        \end{aligned}
                \end{equation}
                The determinant of $\ad(a_2)\vert_K-\mu I_3$
                is must be $0$ for $\mu = 0,\al,\frac{\al^2}{2}$,
                since $a_2$ is a $\Phi(\al,\sfrac{\al^2}{2})$-axis
                and $a_2\not\in K$.
                These cases respectively correspond to
                \begin{equation}
                        \label{eq-4B-no-kernel}
                        \begin{gathered}
                                \al(2\al^5-4\al^4-6\al^3+13\al^2-7\al+1) = 0 \\
                                66\al^6-124\al^5+26\al^4+71\al^3-57\al^2+17\al-2 = 0 \\
                                \al(16\al^7-24\al^6-22\al^5+78\al^4-91\al^3+57\al^2-18\al+2) = 0
                        \end{gathered}
                \end{equation}
                with common additional factor $-\frac{\al^3}{4}(\al-1)^{-1}(2\al-1)^{-4}$.
                The equations have three, two and two real roots respectively.
                Note that $\al = \sfrac{1}{4}$ is not a solution to any of them.
                Therefore if $\al$ is not a solution to \eqref{eq-4B-no-kernel}
                then $K$ is killed by the fusion rules.
        \end{proof}

        \begin{theorem}
                \label{thm-4B}
                The algebra $A$, denoted $\sh{4B_\al}$,
                over $R = \ZZ[\sfrac{1}{2},\al,\al^{-1}]$,
                for $\al\neq0,1,2$ nor a root of \eqref{eq-4B-no-kernel},
                with basis $a_{-1},a_0,a_1,a_2,s$
                and multiplication from Table \ref{tbl-4B}
                is a $2$-generated $\Phi(\al,\sfrac{\al^2}{2})$-axial algebra
                and the cover of $\sh{4B}$.
        \end{theorem}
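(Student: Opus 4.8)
The plan is to assemble the lemmas already proved in this section, which between them force precisely the relations defining the claimed algebra, and then to verify the fusion rules directly, exactly as in the proofs of Theorems \ref{thm-5A} and \ref{thm-6A}. We are in the case where the third factor of \eqref{eq-mother} vanishes, so \eqref{eq-case-3} holds; this relation is a genuine fusion-rule consequence, namely of $0\star0=0$ applied to the $0$-eigenvector $z_2$ (i.e. of $a_0(z_2z_2)=0$), so it belongs in $\bar J_{\sh{4B}}$ and not to an extraneous constraint.

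First I would invoke Lemma \ref{lem-4B-badman} to get $a_2=a_{-2}$, and hence $a_0=a_4$, collapsing $a_{-2}$ out of the spanning set $B$ of Theorem \ref{thm-mt} and leaving $\rho$ of order $4$ as expected for $\sh{4B}$. With $a_2=a_{-2}$ the subalgebras $\la a_0,a_2\ra$ and $\la a_1,a_{-1}\ra$ become $\Phi(\al)$-dihedral, so Theorem \ref{thm-hrs-sak} gives the dichotomy $\lm_2=\sfrac{\al}{2}$ or $\lm_2=0$; the branch $\lm_2=\sfrac{\al}{2}$ is the one satisfied by $(\al,\bt)=(\sfrac{1}{4},\sfrac{1}{32})$ and so defines $\sh{4B}$. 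Lemma \ref{lem-4B-lm} then forces $\bt=\sfrac{\al^2}{2}$ and $\lm=\sfrac{\al^2}{4}$, and Lemma \ref{lem-4B-kernel} shows that, away from the roots of \eqref{eq-4B-no-kernel}, the space $K=\la x,x^f,y_2\ra$ is killed by the fusion rules. Since $x$ and $x^f$ solve for $s_2$ and $s_2^f$ in terms of $a_{-1},a_0,a_1,a_2,s$ (using that $\al$ is invertible), this removes the last two vectors, leaving exactly the basis $\{a_{-1},a_0,a_1,a_2,s\}$ of the statement; substituting all of these relations into the multiplication of Theorem \ref{thm-mt} yields Table \ref{tbl-4B}.

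It then remains to confirm that $A$ really is a $\Phi(\al,\sfrac{\al^2}{2})$-axial algebra. As $A$ is generated by $a_0$ and $a_1$, and the eigenvectors of $a_1$ come from those of $a_0$ by the evident symmetry, it suffices to check the fusion rules of Table \ref{tbl-ising} for $a_0$: using the explicit eigenvectors of Lemma \ref{lem-eigvect-ising}, I would verify in \cite{gap} that every product of eigenvectors lies in the eigenspace prescribed by the table. Finally, because every relation imposed above was dictated by the fusion rules --- the vanishing coefficients of $a_0(z_2z_2)$, the dichotomy of Theorem \ref{thm-hrs-sak}, and the killing of $K$ --- and at each branch we selected the factor through $(\sfrac{1}{4},\sfrac{1}{32})$, the ideals obtained are the minimal ones of Lemma \ref{lem cover smallest factor}; hence $A$ is the cover of $\sh{4B}$, not merely a weak cover.

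The main obstacle is twofold. Computationally, the polynomials are enormous --- most visibly the numerator \eqref{eq-4B-badman}, whose irreducibility and whose role in forcing $a_2=a_{-2}$, together with the concluding fusion-rule check, rely on the machine computation in \cite{gap}. Conceptually, the delicate part is the branching: at Lemma \ref{lem-4B-badman}, at the $\lm_2$-dichotomy, and at Lemma \ref{lem-4B-lm}, one must argue that each discarded factor either forces the trivial algebra (by the coprimality-to-$(\al-\sfrac{1}{4})$ or $(\bt-\sfrac{1}{32})$ argument following Lemma \ref{lem cover smallest factor}) or fails to pass through $(\sfrac{1}{4},\sfrac{1}{32})$, so that the chosen branch is the unique one giving the cover of $\sh{4B}$.
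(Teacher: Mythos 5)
Your proposal takes exactly the paper's route: the paper's own proof of Theorem \ref{thm-4B} consists solely of your final step --- checking the fusion rules for $a_0,a_1$ in \cite{gap} via Lemma \ref{lem-eigvect-ising} --- while the necessity of the defining relations (Lemma \ref{lem-4B-badman}, the $\lm_2$-dichotomy, Lemmas \ref{lem-4B-lm} and \ref{lem-4B-kernel}, and the elimination of $a_{-2},s_2,s_2^f$ from the spanning set) is carried by the preceding text of the section, assembled just as you assemble it.

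One imprecision in your justification of the cover property is worth correcting. You select the branch $\lm_2=\sfrac{\al}{2}$ on the grounds that it ``is satisfied by $(\al,\bt)=(\sfrac{1}{4},\sfrac{1}{32})$'', and later propose discarding factors that ``fail to pass through $(\sfrac{1}{4},\sfrac{1}{32})$''. But both branches of the dichotomy are consistent with that parameter point: the other branch, $\lm_2=0$, is precisely the $\sh{4A}$ case at the same $(\al,\bt)$, so a test based only on the point $(\sfrac{1}{4},\sfrac{1}{32})$ cannot distinguish them. The criterion the paper actually uses (\cf Lemma \ref{lem cover smallest factor} and the matching-ideal setup) is vanishing under evaluation in the Norton--Sakuma algebra $\sh{4B}$ itself, where $\la a_i,a_{i+2}\ra\cong\sh{2A}$ forces $\lm_2=\sfrac{1}{8}=\sfrac{\al}{2}$, whereas $\lm_2=0$ fails there; the coprimality-to-$(\al-\sfrac{1}{4})$ argument is reserved for factors involving only $\al$ (or only $\bt$). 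With the branch selection stated this way, your argument that the resulting ideals are the minimal matching ones, and hence that $A$ is the cover rather than a weak cover, goes through as in the paper.
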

        \begin{proof}
                $A$ is generated by $a_0,a_1$,
                so we only need to check the fusion rules for $a_0,a_1$.
                We do this in \cite{gap}.
                This is easily done using Lemma \ref{lem-eigvect-ising}.
        \end{proof}

        \begin{lemma}
                \label{lem-4B-form}
                The algebra $\sh{4B_\al}$ is Frobenius;
                it is positive-definite for all $\al$,
                and degenerate for $\al = -1$.
        \end{lemma}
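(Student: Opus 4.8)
The plan is to follow the pattern of Lemmas \ref{lem-5A-form} and \ref{lem-6A-form}. First I would verify, by direct computation in \cite{gap}, that the bilinear form on $\sh{4B_\al}$ induced as a quotient of the form defined in Section \ref{sec-mt} is Frobenius. Since $\sh{4B_\al}$ is only $5$-dimensional with basis $\{a_{-1},a_0,a_1,a_2,s\}$, this reduces to checking the associating identity $(xy,z)=(x,yz)$ on basis triples, each of which becomes a rational identity in $\al$ once we substitute $\bt=\frac{\al^2}{2}$, $\lm=\frac{\al^2}{4}$ and $\lm_2=\frac{\al}{2}$ as determined in Lemmas \ref{lem-4B-lm} and the preceding analysis.

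Next I would compute the Gram matrix of the form on this basis and its determinant as a rational function of $\al$. I expect the determinant to factor, up to a positive constant and powers of the invertible elements $\al$ and $(2\al-1)$, as a power of $(\al+1)$ times a factor with no real zeros in the domain of Theorem \ref{thm-4B}. The vanishing of the determinant precisely at $\al=-1$ then yields degeneracy there, which accounts for the proper quotient recorded in Table \ref{tbl-covers}; note that $\al=-1$ is itself admissible (it is neither $0,1,2$ nor a root of \eqref{eq-4B-no-kernel}), so the algebra genuinely exists at $\al=-1$ while its form acquires a radical. This is the analogue of the boundary roots appearing for $\sh{5A_\al}$ and $\sh{6A_\al}$.

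To establish positive-definiteness for all admissible $\al\neq-1$, which is stronger than the interval statements of the earlier lemmas, I would invoke Sylvester's criterion: compute and factor each of the five leading principal minors of the Gram matrix in \cite{gap}, and confirm that none has a real root other than possibly $\al=-1$. Since the genuine Norton-Sakuma algebra $\sh{4B}=\sh{4B_{1/4}}$ is positive-definite, every leading minor is positive at the admissible point $\al=\frac{1}{4}$; if each minor is a positive constant times a product of factors with no real zeros away from $\al=-1$ (for instance even powers of $(\al+1)$ or manifestly positive polynomials), then by continuity every minor remains positive for all real $\al\neq-1$, forcing positive-definiteness throughout and degeneracy exactly at $\al=-1$.

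The main obstacle is precisely this last step. Unlike the $\sh{5A_\al}$ and $\sh{6A_\al}$ cases, where the determinant changes sign at finite roots and positive-definiteness is confined to an interval, here I must verify that \emph{no} leading principal minor crosses zero anywhere on the real line except at $\al=-1$: continuity together with a nonvanishing determinant only guarantees constant signature on each connected component of $\RR\setminus\{-1\}$, so I need the positivity at $\al=\frac{1}{4}$ to propagate across \emph{both} components. Should some lower minor acquire a spurious real root, positive-definiteness could fail on part of the line even while the full determinant stays nonnegative. Confirming that all five minors share the property of having $\al=-1$ as their only real zero is the crux of the argument, and it is the factorisation of these minors in \cite{gap} that carries the real content of the lemma.
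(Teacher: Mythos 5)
Your proposal follows essentially the same route as the paper: the paper also verifies in \cite{gap} that the induced form is Frobenius and computes the Gram determinant, which comes out as $\frac{\al^4}{2^8}(\al-2)^4(\al+1)^2$ --- matching your prediction of even powers of factors whose only admissible real zero is $\al=-1$ (the zeros $\al=0,2$ being excluded from the domain). The one difference is that the paper simply reads positive-definiteness off this determinant ("this is always positive; its roots are $\al=-1$"), whereas you correctly note that nonvanishing of the determinant alone only fixes the signature on each connected component, and your proposed Sylvester-minor check anchored at the positive-definite point $\al=\tfrac{1}{4}$ supplies the justification the paper leaves implicit in its \cite{gap} computation.
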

        \begin{proof}
                We check by direct computation in \cite{gap}
                that the form on $\sh{4B_\al}$
                induced as a quotient of the form in Section \ref{sec-mt}
                is Frobenius.
                We also use \cite{gap}
                to calculate the Gram matrix,
                the determinant of which is
                \begin{equation}
                        \frac{\al^4}{2^8}(\al-2)^4(\al+1)^2.
                \end{equation}
                This is always positive.
                Its roots are $\al = -1$.
        \end{proof}

        \begin{table}[h]
        \begin{center}
        \renewcommand{\arraystretch}{1.2}
        \begin{tabular}{ll}
                \hline Description \quad & Products \\
                \hline
                $a_{-1},a_0,a_1,a_2,s$ &
                        $a_is = -\frac{1}{4}\al^{2}(1 - \al + \al^{2}) a_{-1} + \frac{1}{8}(2 - \al)\al^{3} (a_0 + a_2) + \frac{1}{2}(2 - \al)\al s$ \\
                $a_i = a_{i\bmod4}$ &
                        $ss = \frac{1}{8}(-2 + \al)(\frac{1}{2}-1 + 2\al)\al^{4} (a_{-1} + a_0 + a_1 + a_2) + \frac{1}{2}\al^{2}(\frac{1}{2}1 - 6\al + 2\al^{2}) s$ \\
                $\la a_i,a_{i+2}\ra\cong\sh{3C_\al}$ &
                        $(a_i,a_j) = \frac{\al^2}{4}$ \\
                \hline
        \end{tabular}
        \caption{$\sh{4B_\al}$}
                \label{tbl-4B}
        \end{center}
        \end{table}


        \begin{lemma}
                \label{lem-4A-dichot}
                $s_2 + \bt(a_0+a_2), s_2^f + \bt(a_1+a_{-1})\in I_{\sh{4A}}$
                and $\lm_1-\bt\in J_{\sh{4A}}$.
        \end{lemma}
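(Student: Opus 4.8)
The plan is to use that we are in the $\sh{4A}$ branch of the dichotomy, where $\lm_2\in J_{\sh{4A}}$, together with $a_{-2}=a_2$ from Lemma~\ref{lem-4B-badman}: the two $I_{\sh{4A}}$-memberships will follow from the Jordan-type subalgebra $\la a_0,a_2\ra$, while $\lm_1-\bt\in J_{\sh{4A}}$ will come from feeding $\lm_2=0$ into the relation~\eqref{eq-case-3}, which already holds in $\sh{4A}$.

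First I would note that, since $a_2^{\tau(a_0)}=a_{-2}=a_2$, the generator $a_2$ lies in $A^{a_0}_{1,0,\al}$, so $\la a_0,a_2\ra$ is a $2$-generated $\Phi(\al)$-axial algebra governed by Theorem~\ref{thm-hrs}. In the $\sh{4A}$ branch $\lm_2=\lm^{a_0}(a_2)=0$; since $\al\neq0,\sfrac{1}{2}$ this leaves only the degenerate possibilities $a_0=a_2$ (for which $\lm^{a_0}(a_2)=1$) or $a_0a_2=0$, and as $\lm_2=0$ we must have $a_0a_2=0$, i.e. $\la a_0,a_2\ra\cong\sh{2B}$. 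Unwinding $s_2=a_0\circ a_2=a_0a_2-\bt(a_0+a_2)$ then gives $s_2+\bt(a_0+a_2)=a_0a_2=0$ in $\sh{4A}$, so $s_2+\bt(a_0+a_2)\in I_{\sh{4A}}$; this is precisely the $\al$-eigenvector $x_2$ of Lemma~\ref{lem-eigvect-ising} after imposing $\lm_2=0$ and $a_{-2}=a_2$. Running the identical argument on $\la a_{-1},a_1\ra\cong\sh{2B}$ (equivalently, applying the flip $f$, which carries $a_0\circ a_2$ to $a_1\circ a_{-1}$) yields $s_2^f+\bt(a_1+a_{-1})\in I_{\sh{4A}}$.

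For $\lm_1-\bt\in J_{\sh{4A}}$ I would substitute $\lm_2=0$ into~\eqref{eq-case-3}; as $\al\bt$ is invertible this places the quadratic
\begin{equation*}
q(\lm_1)=4(2\al-1)\lm_1^2-2(\al^2+6\al\bt-4\bt)\lm_1+2\al^2\bt+4(\al-1)\bt^2
\end{equation*}
into $J_{\sh{4A}}$ (recall $\lm=\lm_1$). A direct substitution gives $q(\bt)=0$, so over the fraction field
\begin{equation*}
q(\lm_1)=4(2\al-1)(\lm_1-\bt)\Bigl(\lm_1-\frac{\al^2+2(\al-1)\bt}{2(2\al-1)}\Bigr).
\end{equation*}
Because $\sh{4A}$ is everywhere faithful, $J_{\sh{4A}}$ is a prime ideal, and $4(2\al-1)\notin J_{\sh{4A}}$ since $\al\neq\sfrac{1}{2}$; hence one of the two linear factors lies in $J_{\sh{4A}}$. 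The second root differs from $\bt$ by $-\frac{\al(\al-2\bt)}{2(2\al-1)}$, which is nonzero because $\al\neq0$ and $\al-2\bt$ is invertible; thus in $\sh{4A}$, where $\lm_1=\bt$, the factor $\lm_1-\frac{\al^2+2(\al-1)\bt}{2(2\al-1)}$ is a nonzero scalar and cannot lie in $J_{\sh{4A}}$. Therefore $\lm_1-\bt\in J_{\sh{4A}}$, which is exactly the passage to the smallest factor described in Lemma~\ref{lem cover smallest factor}.

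The substitution, the check $q(\bt)=0$, and the factorisation are routine. The step needing care is the selection of the correct linear factor: it rests on the primality of $J_{\sh{4A}}$, on the two roots being distinct (guaranteed by the standing invertibility of $\al-2\bt$), and on $\sh{4A}$ genuinely realising $\lm_1=\bt$ rather than the competing root. A secondary subtlety is ruling out the collapse $a_0=a_2$ when identifying $\la a_0,a_2\ra\cong\sh{2B}$, which is handled by $\lm_2=0\neq1$.
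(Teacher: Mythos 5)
Your proposal is correct and takes essentially the same route as the paper: the paper likewise substitutes $\lm_2=0$ into \eqref{eq-case-3} to obtain the factorisation $(\lm_1-\bt)(2(2\al-1)\lm_1-\al^2-2\al\bt+2\bt)\in J_{\sh{4A}}$ and discards the second factor because it does not vanish in $\sh{4A}$, and it likewise deduces $a_0a_2=0=a_1a_{-1}$ from $\lm_2=0$ via the Jordan-type subalgebras $\la a_0,a_2\ra,\la a_1,a_{-1}\ra$ set up before the lemma. Your additional care (checking $q(\bt)=0$, ruling out $a_0=a_2$ via $\lm=1$, and invoking primality of $J_{\sh{4A}}$ with the distinctness of the two roots) only makes explicit what the paper leaves implicit.
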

        \begin{proof}
                From \eqref{eq-case-3},
                when $\lm_2 = 0$
                we get
                \begin{equation}
                        (\lm_1-\bt)(2(2\al-1)\lm_1 - \al^2 -2\al\bt+2\bt) = 0.
                \end{equation}
                But $\lm_1-\bt = 0$ in $\sh{4A}$
                and the other factor is not $0$ in $\sh{4A}$.

                From $0 = \lm_2 = \lm_1^{a_0}(a_2) = \lm_1^{a_1}(a_{-1})$
                we deduce that $a_0a_2 = 0 = a_1a_{-1}$.
        \end{proof}

        \begin{lemma}
                \label{lem-4A-1/4}
                $\al - \frac{1}{4}\in J_{\sh{4A}}$.
        \end{lemma}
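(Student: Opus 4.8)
The plan is to extract the one remaining constraint from the fusion rule $\al\star\al=\{1,0\}$ (Table~\ref{tbl-ising}), which has not yet been imposed on the $\sh{4A}$ data. By Lemma~\ref{lem-4A-dichot} we already know that in the cover of $\sh{4A}$ we have $\lm_1=\bt$, $\lm_2=0$, and hence $s_2=-\bt(a_0+a_2)$, $s_2^f=-\bt(a_1+a_{-1})$, together with $a_0a_2=a_1a_{-1}=0$; moreover $a_{-2}=a_2$ by Lemma~\ref{lem-4B-badman}. First I would substitute these into the $\al$-eigenvector $x=(\bt-\lm_1)a_0+\frac{1}{2}\bt(a_1+a_{-1})+s$ of Lemma~\ref{lem-eigvect-ising}, which collapses to $x=\frac{1}{2}\bt(a_1+a_{-1})+s$ (and $x_2=0$, so $A^{a_0}_\al=\la x\ra$).

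Since $x\in A^{a_0}_\al$, the fusion rule forces $xx\in A^{a_0}_{\{1,0\}}$, that is $(xx)_\al=(xx)_\bt=0$. The key simplification is that every surviving vector occurring in $x$, namely $a_0,a_2,a_1+a_{-1}$ and $s$, is fixed by $\tau(a_0)$, so $xx$ is $\tau(a_0)$-invariant and $(xx)_\bt=0$ automatically; only the $\al$-part carries content. I would compute $xx$ from the multiplication table, using the product $ss$ from Lemma~\ref{lem-c-c} with the above substitutions, together with $a_0s$ and $a_{-1}s=(a_1s)^{\tau(a_0)}$ from Lemma~\ref{lem-a(acb)}, and then read off $(xx)_\al$ via \eqref{eq-x-al}. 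Equivalently one demands $a_0(xx)\in\la a_0\ra$, which forces the coefficients of $s$ and of $a_1+a_{-1}$ to vanish; both conditions reduce to a single polynomial identity, and the bulk of the $ss$ computation is best delegated to \cite{gap}.

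The resulting relation factors as $\al(4\al-1)(\al-2\bt)=0$. As $\al$ and $\al-2\bt$ are invertible, this yields $4\al-1=0$, i.e. $\al-\frac{1}{4}\in J_{\sh{4A}}$. The main obstacle is purely the size of the $ss$ computation; the $\tau(a_0)$-symmetry that kills the $\bt$-part, and the collapse of the $s$- and $(a_1+a_{-1})$-conditions to one equation, are what keep the final relation clean enough to factor and to isolate the admissible factor $4\al-1$.
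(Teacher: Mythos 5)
Your proposal is correct, and it reaches the conclusion by a route parallel to, but genuinely distinct from, the paper's. The preparatory reduction is identical: the paper also invokes Lemma~\ref{lem-4A-dichot}, and obtains $a_{-2}=a_2$ by checking that the $\sh{4A}$ data $(\lm_1,\lm_2,\al,\bt)=(\bt,0,\tfrac14,\tfrac1{32})$ is not a root of \eqref{eq-4B-badman} and then citing Lemma~\ref{lem-4B-badman}. But where you apply $\al\star\al=\{1,0\}$ to the $\al$-eigenvector $x=\tfrac12\bt(a_1+a_{-1})+s$, the paper applies $0\star0=\{0\}$ to the $0$-eigenvector $z=-\al\bt a_0+\tfrac12(\al-\bt)(a_{-1}+a_1)-s$, computing
\begin{equation*}
        a_0(zz) \;=\; -\al\bigl(\al-\tfrac14\bigr)(\al-\bt)\bigl(\bt(a_{-1}+a_1)+2s\bigr) \;=\; 0
\end{equation*}
and concluding $\al-\tfrac14\in J_{\sh{4A}}$ since $\bt(a_{-1}+a_1)+2s$ cannot vanish in the $5$-dimensional $\sh{4A}$. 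Your variant does work: carrying it out, in the quotient one finds
\begin{equation*}
        ss \;=\; \tfrac12\bt(\al-\bt)(\al-2\bt)(a_0+a_2)
        +\tfrac14\bt(\al-\bt)(1-2\al-4\bt)(a_1+a_{-1})
        -\bt(3\al-2\bt)\,s,
\end{equation*}
whence
\begin{equation*}
        xx \;=\; \tfrac12\al\bt(\al-\bt)(a_0+a_2)
        +\tfrac14\al\bt(1-2\al-2\bt)(a_1+a_{-1})
        -\al\bt\,s,
        \qquad
        (xx)_\al \;=\; \tfrac12\bt(1-4\al)\,x .
\end{equation*}
Since $\bt$ is a unit and $x$ is nonzero in the cover (being nonzero in $\sh{4A}$ itself), the fusion rule forces $4\al-1=0$. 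Your two structural observations are also right: $(xx)_\bt=0$ is automatic because $x$ is $\tau(a_0)$-invariant, and the $s$- and $(a_1+a_{-1})$-conditions from $a_0(xx)\in\la a_0\ra$ are unit multiples of one another (they come out as $\tfrac12\al\bt(1-4\al)$ and $\tfrac14\al\bt^2(1-4\al)$). This makes your computation marginally leaner than the paper's; the paper's choice of $z$ and $0\star0$ instead reuses the template of Lemma~\ref{lem-mother-rels} and of the other cover sections.

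Two points need tightening. First, Lemma~\ref{lem-4B-badman} is conditional, so quoting it for $a_{-2}=a_2$ obliges you to verify that \eqref{eq-4B-badman} does not lie in $J_{\sh{4A}}$; this evaluation is precisely the first step of the paper's proof and cannot be skipped. Second, the relation does not factor as $\al(4\al-1)(\al-2\bt)$: the actual scalar is $\tfrac12\bt(1-4\al)$. This is harmless, since the parasitic factors are units of $R_0$ either way and $4\al-1$ is what survives, but it is a reminder that the exact shape of such relations cannot be guessed in advance. Relatedly, substituting directly into the printed formula \eqref{eq-ss} of Lemma~\ref{lem-c-c} yields an $a_0$-coefficient inconsistent with the known $\sh{4A}$ products (that display appears to carry a typo), so the $ss$ above should be recomputed independently, e.g.\ from \eqref{eq (acx)(acy)} or in \cite{gap}, exactly as your plan allows.
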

        \begin{proof}
                From Lemma \ref{lem-4A-dichot}
                we substitute $\lm_1 = \bt,\lm_2 = \sfrac{\al}{2},\al=\sfrac{1}{4},\bt=\sfrac{1}{32}$
                into \eqref{eq-4B-badman}
                and see that this is not a solution.
                Therefore $a_{-2} - a_2\in I_{\sh{4A}}$.
                After substituting similarly,
                we have that $z = -\al\bt a_0 + \frac{1}{2}(\al-\bt)(a_{-1}+a_1) - s$
                is a $0$-eigenvector for $a_0$.
                Then from the fusion rule $0\star0 = \{0\}$,
                \begin{equation}
                        a_0(zz) = -\al(\al-\frac{1}{4})(\al-\bt)(\bt(a_{-1}+a_1)+2s) = 0
                \end{equation}
                and as there can be no further relations in $I_{\sh{4A}}$
                since $\sh{4A}$ is $5$-dimensional,
                we must have $\al-\frac{1}{4}\in J_{\sh{4A}}$.
        \end{proof}

        \begin{theorem}
                \label{thm-4A}
                The algebra $A$, denoted $\sh{4A_\bt}$,
                over $R = \ZZ[\frac{1}{2},\bt]$ for $\bt\neq0,\frac{1}{4},1$,
                with basis $a_{-1},a_0,a_1,a_2,s$
                and multiplication from Table \ref{tbl-4A}
                is a $2$-generated $\Phi(\sfrac{1}{4},\bt)$-axial algebra for all $\bt\in\kk\smallsetminus\{1,0,\frac{1}{4},\frac{1}{8},\frac{1}{16}\}$
                and is the cover of $\sh{4A}$.
        \end{theorem}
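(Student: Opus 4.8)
The plan is to treat the two assertions in turn: that the algebra $A$ presented in Table~\ref{tbl-4A} is a $\Phi(\sfrac{1}{4},\bt)$-axial algebra over $\ZZ[\sfrac{1}{2},\bt]$ for the admissible $\bt$, and that it is the cover of $\sh{4A}$. For the axial property I would first note that $A$ is generated by $a_0$ and $a_1$, since $s=a_0\circ a_1$ and the remaining $a_i$ arise from $a_0,a_1$ under the dihedral group $T$; by Lemma~\ref{lem-auts-preserve-axes} and the symmetry interchanging $a_0$ and $a_1$ it then suffices to check that $a_0$ alone is a $\Phi(\sfrac{1}{4},\bt)$-axis. Idempotency $a_0a_0=a_0$ and primitivity $A^{a_0}_1=\la a_0\ra$ are read off the presentation, and the excluded values have a clean meaning: $\bt\neq0,\sfrac{1}{4},1$ keeps the four eigenvalues $1,0,\sfrac{1}{4},\bt$ distinct, while $\bt\neq\sfrac{1}{8},\sfrac{1}{16}$ is exactly the invertibility of $\al-2\bt$ and $\al-4\bt$ at $\al=\sfrac{1}{4}$ demanded by Theorem~\ref{thm-mt}.

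Next I would exhibit a full eigenbasis for $a_0$ using Lemma~\ref{lem-eigvect-ising}, specialised via $\al=\sfrac{1}{4}$, $\lm_1=\bt$, $\lm_2=0$ and $s_2=-\bt(a_0+a_2)$, $s_2^f=-\bt(a_1+a_{-1})$, the relations forced in Lemmas~\ref{lem-4A-dichot} and~\ref{lem-4A-1/4}. After these substitutions several listed eigenvectors collapse --- $x_2,y_2$ vanish and $a_2$ itself becomes a $0$-eigenvector --- leaving $a_0$, the pair $\{z,a_2\}$, the vector $x$, and $y=a_1-a_{-1}$, which I would verify span the five-dimensional $A$; thus $a_0$ is semisimple with spectrum exactly $\Phi(\sfrac{1}{4},\bt)$. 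The essential step is then the fusion-rule check $A^{a_0}_\mu A^{a_0}_\nu\subseteq A^{a_0}_{\mu\star\nu}$ against Table~\ref{tbl-ising}; multiplying the eigenvectors out using Table~\ref{tbl-4A} is a finite symbolic computation in the single variable $\bt$, which I would carry out in \cite{gap} as for the earlier covers.

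For the cover claim I would argue that the defining relations of $A$ are at once necessary and sufficient. They are necessary: the mother relation~\eqref{eq-mother} together with Lemmas~\ref{lem-4A-dichot} and~\ref{lem-4A-1/4} places each of $\al-\sfrac{1}{4}$, $\lm_1-\bt$, $\lm_2$ in $J_{\sh{4A}}$ and each of $a_{-2}-a_2$, $s_2+\bt(a_0+a_2)$, $s_2^f+\bt(a_1+a_{-1})$ in $I_{\sh{4A}}$, so any $\Phi(\sfrac{1}{4},\bt)$-axial generalisation of $\sh{4A}$ is a quotient of a scalar extension of $A$. They are sufficient because $A$ is itself such a generalisation --- it is $\Phi(\sfrac{1}{4},\bt)$-axial by the first part and specialises to $\sh{4A}$ at $\bt=\sfrac{1}{32}$. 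Being the largest axial algebra surjecting onto $\sh{4A}$, it is by Definition~\ref{def axcov} the cover.

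I expect the only delicate point to be the sufficiency half of the axial verification: the lemmas guarantee our relations are necessary, but not a priori that imposing them yields a genuine axial algebra for \emph{every} admissible $\bt$ rather than merely at $\bt=\sfrac{1}{32}$. What makes this tractable is a dimension count --- $\sh{4A}$ is five-dimensional and our spanning set also has size five, so no further relation can be imposed without collapsing the specialisation to $\sh{4A}$ --- but turning this into a proof still requires the fusion-rule identities to hold identically in $\bt$, with no hidden denominator vanishing on the admissible locus. Verifying this uniformity, and confirming that the excluded set $\{0,\sfrac{1}{4},\sfrac{1}{8},\sfrac{1}{16},1\}$ is complete, is the part I would treat most carefully.
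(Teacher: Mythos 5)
Your proposal is correct and follows essentially the same route as the paper: the paper's own proof reduces to checking the fusion rules for $a_0,a_1$ (by generation and symmetry) via a symbolic computation in \cite{gap} using Lemma \ref{lem-eigvect-ising}, with the cover property resting on the necessity of the relations established in Lemmas \ref{lem-4A-dichot} and \ref{lem-4A-1/4} exactly as you argue. Your added details --- the collapse of $x_2,y_2,z_2$ after substituting $\lm_2=0$, $s_2=-\bt(a_0+a_2)$, the interpretation of the excluded values $\bt=\sfrac{1}{8},\sfrac{1}{16}$ as invertibility of $\al-2\bt,\al-4\bt$, and the necessity/sufficiency framing of Definition \ref{def axcov} --- are faithful elaborations of what the paper leaves implicit.
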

        \begin{proof}
                $A$ is generated by $a_0,a_1$,
                so we only need to check the fusion rules for $a_0,a_1$.
                We do this in \cite{gap}.
                This is easily done using Lemma \ref{lem-eigvect-ising}.
        \end{proof}

        \begin{lemma}
                \label{lem-4A-form}
                The algebra $\sh{4A_\bt}$ is Frobenius;
                it is positive-definite for $0 < \bt < \frac{1}{2}$,
                and degenerate for $\bt = \frac{1}{2}$.
        \end{lemma}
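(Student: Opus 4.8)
The proof follows the template of Lemmas \ref{lem-5A-form}, \ref{lem-6A-form} and \ref{lem-4B-form}. First I would verify the Frobenius property by a direct computation in \cite{gap}: the bilinear form on $\sh{4A_\bt}$ is the image, under the quotient map producing $\sh{4A_\bt}$, of the form $(,)$ constructed in Section \ref{sec-mt}. It therefore suffices to specialise the values of $(,)$ recorded there to $\al=\frac14$ together with the parameters $\lm_1=\bt$ and $\lm_2=0$ of $\sh{4A_\bt}$ (from Lemmas \ref{lem-4A-dichot} and \ref{lem-4A-1/4}), and then to check directly that $(xy,z)=(x,yz)$ on the basis $a_{-1},a_0,a_1,a_2,s$. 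Since $\sh{4A_\bt}$ is generated by idempotents the form is automatically symmetric, so this reduces to a finite verification.

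Next I would assemble the $5\times5$ Gram matrix with respect to $a_{-1},a_0,a_1,a_2,s$. Its entries are obtained by specialising the formulas of Section \ref{sec-mt} and applying the relations defining the cover, namely $a_{-2}=a_2$, $s_2=-\bt(a_0+a_2)$ and $s_2^f=-\bt(a_1+a_{-1})$; in particular $(a_i,a_{i+2})=0$, reflecting $\la a_i,a_{i+2}\ra\cong\sh{2B}$. The determinant of this matrix is a polynomial in $\bt$ over $\ZZ[\frac12,\bt]$, which I would compute and factor in \cite{gap} so as to isolate the degeneracy locus. I expect a factor vanishing at $\bt=\frac12$, giving the claimed degeneracy, and I would confirm that no other real root of the determinant lies in the open interval $(0,\frac12)$.

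Positive-definiteness then follows by a continuity (signature) argument rather than a direct application of Sylvester's criterion. At $\bt=\frac1{32}$ the algebra specialises to the Norton-Sakuma algebra $\sh{4A}$, which embeds in the real Griess algebra and carries a positive-definite associating form, and $\frac1{32}\in(0,\frac12)$. The eigenvalues of the real symmetric Gram matrix depend continuously on $\bt$, so the signature can change only where the determinant vanishes. Since the determinant has no zero in $(0,\frac12)$, the signature is constant throughout, and hence the form remains positive-definite on the whole interval.

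The main obstacle is purely computational: evaluating and factoring the $5\times5$ determinant cleanly enough to read off its real roots, which I would delegate to \cite{gap}. The only conceptual inputs are the positive-definiteness at the base point $\bt=\frac1{32}$, inherited from Majorana theory and the Griess algebra, and the standard fact that the signature of a real symmetric form is locally constant away from its degeneracy locus.
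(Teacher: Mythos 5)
Your proposal is correct and follows essentially the same route as the paper: the paper also verifies the Frobenius property by direct computation in \cite{gap} on the form inherited from Section \ref{sec-mt}, and computes the Gram determinant, which is $\frac{\bt}{8}(1-2\bt)^3$, whose only admissible root is $\bt=\frac{1}{2}$ and which is positive exactly on $0<\bt<\frac{1}{2}$. The only difference is that your continuity-of-signature argument (anchored at the positive-definite Norton--Sakuma point $\bt=\frac{1}{32}$) makes explicit a step the paper leaves implicit, since a positive determinant alone would not force positive-definiteness of a $5\times5$ form; this is a useful clarification rather than a different method.
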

        \begin{proof}
                We check by direct computation in \cite{gap}
                that the form on $\sh{4A_\bt}$
                induced as a quotient of the form in Section \ref{sec-mt}
                is Frobenius.
                We also use \cite{gap}
                to calculate the Gram matrix,
                the determinant of which is
                \begin{equation}
                        \frac{\bt}{8}(1-2\bt)^3.
                \end{equation}
                This is positive when $\bt(1-2\bt)$ is positive.
                Its only acceptable root is $\bt = \frac{1}{2}$.
        \end{proof}

        \begin{lemma}
                \label{lem-4X-miyam}
                On $\sh{4A_\bt}$ and $\sh{4B_\al}$,
                $\size{\tau(a_0)\tau(a_1)} = 2$,
                and $\size{\tau(a_0)\tau(a_1)}\leq4$ in any larger algebra.
        \end{lemma}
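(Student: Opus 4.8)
The plan is to mirror Lemmas~\ref{lem-5a-miyam} and~\ref{lem-6a-miyam}: make the action of $\tau(a_0),\tau(a_1)$ on the basis $\{a_{-1},a_0,a_1,a_2,s\}$ explicit, read off $\rho=\tau(a_0)\tau(a_1)$ directly to obtain $\size{\rho}=2$ on each cover, and then invoke the abstract order bound of Lemma~\ref{lem-global-order} for the inequality in a larger algebra.

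First I would identify $A^{a_0}_\bt$. By Lemma~\ref{lem-eigvect-ising} it is spanned by $y=a_1-a_{-1}$ and $y_2=a_2-a_{-2}$; but Lemma~\ref{lem-4B-badman} yields $a_2=a_{-2}$ on both $\sh{4A_\bt}$ and $\sh{4B_\al}$ (this is the intermediate step before $a_0=a_4$), so $y_2=0$ and $A^{a_0}_\bt=\la a_1-a_{-1}\ra$ is one-dimensional. Since $\tau(a_0)$ negates this line and fixes $A^{a_0}_{\{1,0,\al\}}$, and since $a_0$, $a_2=a_{-2}$, $a_1+a_{-1}$ and $s=a_0\circ a_1$ all lie in $A^{a_0}_{\{1,0,\al\}}$ (recall every $a\circ x$ is fixed by $\tau(a)$), the involution $\tau(a_0)$ acts as the transposition $a_1\leftrightarrow a_{-1}$ fixing $a_0,a_2,s$. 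The analogous computation with the roles of $a_0,a_1$ reversed shows $\tau(a_1)$ swaps $a_0\leftrightarrow a_2$ and fixes $a_{-1}(=a_3),a_1,s$. Multiplying, $\rho=(a_{-1}\,a_1)(a_0\,a_2)$ fixes $s$, so $\rho$ is a nonidentity involution and $\size{\rho}=2$; I would record the two permutation matrices and verify $\rho^2=I$ directly, exactly as in the $\sh{5A}$ and $\sh{6A}$ cases.

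For the bound in a larger algebra, Lemma~\ref{lem-4B-badman} gives $a_0=a_4$, so the axis set $a_0^T\cup a_1^T=\{a_{-1},a_0,a_1,a_2\}$ has size at most $4$, and Lemma~\ref{lem-global-order} then forces $\rho^{4}=1$, i.e.\ $\size{\rho}\le4$. The one point that genuinely needs the explicit computation — and which I expect to be the main obstacle — is that Lemma~\ref{lem-global-order} alone cannot pin the order down: in its even case ($n=2m$ with $m=2$ here) it only delivers $\size{\rho}\mid 2m=4$ together with $m=2\mid\size{\rho}$, leaving $\size{\rho}\in\{2,4\}$. Distinguishing the two requires seeing that $\tau(a_0)$ and $\tau(a_1)$ are disjoint-support transpositions and hence commute, which is precisely what the one-dimensionality of $A^{a_0}_\bt$ — equivalently the relation $a_2=a_{-2}$ from Lemma~\ref{lem-4B-badman}, valid off the exceptional locus~\eqref{eq-4B-badman} — secures.
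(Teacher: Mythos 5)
Your proposal is correct and takes essentially the same route as the paper: the paper's proof simply records that $\tau(a_0)$ and $\tau(a_1)$ act on the basis $\{a_{-1},a_0,a_1,a_2,s\}$ as the disjoint transpositions $(a_{-1}\,a_1)$ and $(a_0\,a_2)$ (so their product is an involution) and then cites Lemma \ref{lem-global-order} with $\size{a_0^T\cup a_1^T}=2+2=4$ for the bound in any larger algebra, exactly as you do. Your added derivation of these permutations from the eigenspace structure, and your observation that Lemma \ref{lem-global-order} alone leaves $\size{\rho}\in\{2,4\}$, just make explicit what the paper asserts by direct computation; the one slight imprecision is that Lemma \ref{lem-eigvect-ising} only says $A^{a_0}_\bt$ \emph{contains} $y,y_2$ rather than that they span it, but this is harmless since your own listing of the $\tau(a_0)$-fixed elements $a_0$, $a_2=a_{-2}$, $a_1+a_{-1}$, $s$ supplies the dimension count that pins $A^{a_0}_\bt$ down to $\la a_1-a_{-1}\ra$.
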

        \begin{proof}
                $\tau(a_0)$ and $\tau(a_1)$ are the permutation matrices
                corresponding to $(1,3)(2)(4)(5)$ and $(1)(2,4)(3)(5)$,
                respectively, 
                on both $\sh{4A_\bt}$ and $\sh{4B_\al}$.
                By Lemma \ref{lem-global-order},
                the order of $\size{\tau(a_0)\tau(a_1)}$ is bounded by $2+2 = 4$,
                the size of $a_0^T\cup a_1^T$.
        \end{proof}

        \begin{table}[h]
        \begin{center}
        \renewcommand{\arraystretch}{1.2}
        \begin{tabular}{ll}
                \hline Description \quad & Products \\
                \hline
                $a_{-1},a_0,a_1,a_2,s$ &
                        $a_is = -\bt^{2} a_{i} + \frac{1}{8}(1 - 4\bt)\bt (a_{i+1} + a_{i+3}) + \frac{1}{4}(1 - 4\bt) s$ \\
                $a_i = a_{i\bmod4}$ &
                        $ss = \frac{1}{32}\bt(4\bt-1)(8\bt-1) (a_{-1} + a_0 + a_1 + a_2) + \frac{1}{4}\bt(3 - 8\bt) s$ \\
                $\la a_i,a_{i+2}\ra\cong\sh{2B}$ &
                        $(a_i,a_j) = \bt$ \\
                \hline
        \end{tabular}
        \caption{$\sh{4A_\bt}$}
                \label{tbl-4A}
        \end{center}
        \end{table}


\section{The cover of $\sh{3A}$}
        \label{sec-3A}

        We see that \eqref{eq-mother} is not zero in $\sh{3A}$,
        so that we have a relation in the algebra.
        Instead, for the case of the cover of $\sh{3A}$ only,
        we have to start by making an extra assumption.
        Namely, suppose also that $a_3 = a_0$.

        Then $a_{-2} = a_1, a_{-1} = a_2$,
        so that $s_2 = s_1 = s_2^f$
        and $\lm_2 = \lm$.
        By Lemma \ref{lem-odd-quots},
        also $\lm^f = \lm$
        (so this assumption implies the earlier one).

        From Theorem \ref{thm-mt} we therefore deduce a multiplication table
        on $\{ a_0,a_1,a_2,s \}$.

        The eigenvectors of $a_0$ of eigenvalue not $1$ are,
        from Lemma \ref{lem-eigvect-ising}:
        \begin{equation}
                \begin{aligned}
                \label{eq-eigvect-3A}
                & z = ((1-\al)\lm_1-\bt)a_0 + \frac{1}{2}(\al-\bt)(a_1 + a_2) - s, \\
                & x = (\bt-\lm_1)a_0 + \frac{1}{2}\bt(a_1+a_2) + s, \\
                & y = a_1-a_2.
                \end{aligned}
        \end{equation}

        Now $0\star0=\{0\}$ means $zz\in A^{a_0}_0$ and $a_0(zz)=0$.
        We calculate that
        \begin{equation}
                \begin{aligned}
                        & a_0(zz) =
                                -\frac{(\al - \bt)(-4(2\al - 1)\lm + 3\al^{2} + (3\bt - 1)\al - 2\bt)}{4(\al-2\bt)(\al-4\bt)}\cdot \\
                                & \quad \Bigl(
2(-(\al^{2} - 2(\bt + 1)\al - 4\bt^{2} + 4\bt)\lm + \bt\al^{2} - \bt(3\bt + 1)\al - 2\bt^{3} + 2\bt^{2}) a_0 \\
& \quad\quad + \bt(-4(\al - 2\bt)\lm + \al^{2} - 2\bt\al - 4\bt^{2}) a_1
         + \bt\al(\al-4\bt) a_2
+ 2(-2(\al - 2\bt)\lm + \al^{2} - 3\bt\al - 2\bt^{2}) s
                        \Bigr).
                 \end{aligned}
        \end{equation}
        The coefficients in the ring must be $0$
        since there cannot be a nontrivial relation among the spanning set,
        since $\sh{3A}$ is $4$-dimensional.
        Apart from $\al-\bt$,
        the only factor of the coefficient of $a_2$ in the above
        is $-4(2\al - 1)\lm + 3\al^{2} + (3\bt - 1)\al - 2\bt$,
        so this must be $0$,
        and indeed this is a common factor for all the coefficients.
        We obtain a relation satisfied by $\sh{3A}$:
        \begin{equation}
                4(2\al-1)\lm = 3\al^2+\al(3\bt-1)-2\bt.
        \end{equation}
        If $\al=1/2$, then the lefthand side is $0$
        and we find $0 = 3/4+3/2\bt-1/2-2\bt = 1/4-1/2\bt$,
        that is, $\bt = 1/2$,
        which contradicts that $\al\neq\bt$.
        Therefore $\al\neq1/2$
        and we have an expression for $\lm$ in $\al,\bt$.
        After substitution we find Table \ref{tbl-3A}.

        \begin{table}[h]
        \begin{center}
        \renewcommand{\arraystretch}{1.2}
        \begin{tabular}{ll}
                \hline Description \quad & Products \\
                \hline
                $\sh{3A'_{\al,\bt}}$ &
                $a_is = \frac{1}{4(1-2\al)}(3\al^3-5\al^2\bt+8\al\bt^2-4\al^2+7\al\bt-4\bt^2+\al-2\bt) a_i$ \\
                $a_0,a_1,a_2,s$ & $\qquad+ \frac{1}{2}\bt(\al-\bt)(a_{i+1}+a_{i+2}) + (\al-\bt) s$ \\
                $a_i = a_{i\bmod3}$ &
                $ss = \frac{(\al-\bt)}{8(1-2\al)}(3\al^3-13\al^2\bt+16\al\bt^2-4\al^2+11\al\bt-8\bt^2+\al-2\bt)(a_0 + a_1 + a_2)$ \\
                 & $\qquad+ \frac{1}{4(2\al-1)}(9\al^3-27\al^2\bt+12\al\bt^2-6\al^2+13\al\bt-6\bt^2+\al) s$ \\
                 & $(a_i,a_j) = \frac{1}{4}\frac{(3\al^{2} + (3\bt - 1)\al - 2\bt)}{(2\al - 1)}$ \\
                 \hline
        \end{tabular}
        \caption{$\sh{3A'_{\al,\bt}}$}
                \label{tbl-3A}
        \end{center}
        \end{table}


        \begin{theorem}
                \label{thm-3A}
                The algebra $A$ over $R = \ZZ[\sfrac{1}{2},\al,\bt,(1-2\al)^{-1}]$ with basis $a_0,a_1,a_2,s$
                and multiplication table \ref{tbl-3A}
                is a $2$-generated $\Phi$-axial algebra,
                denoted $\sh{3A'_{\al,\bt}}$.
                It is a weak cover of $\sh{3A}$.
        \end{theorem}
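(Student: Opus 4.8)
The plan is to follow the strategy of the earlier cover theorems (Theorems \ref{thm-5A}, \ref{thm-6A}, \ref{thm-4B} and \ref{thm-4A}). The multiplication of Table \ref{tbl-3A} has already been pinned down: the collapse $a_3=a_0$ forces $a_{-1}=a_2$, $a_{-2}=a_1$, $s_2=s_2^f=s$ and $\lm_2=\lm$, and applying $0\star0=\{0\}$ through $a_0(zz)=0$ then yields $4(2\al-1)\lm=3\al^2+\al(3\bt-1)-2\bt$, which after substitution produces the table. What remains is to confirm that the generators really are $\Phi(\al,\bt)$-axes. Since $A$ is generated by $a_0,a_1$ and carries the cyclic symmetry $a_0\mapsto a_1\mapsto a_2\mapsto a_0$ fixing $s$ together with the flip, Lemma \ref{lem-auts-preserve-axes} and the transitivity of $T$ on $\{a_0,a_1,a_2\}$ reduce the entire verification to the single axis $a_0$.

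For $a_0$ I would first note that, after inverting the eigenvalue differences $\al,\bt,\al-\bt,1-\al,1-\bt$ (as the axis definition requires and as is already built into Theorem \ref{thm-mt}), the vectors $a_0,z,x,y$ of \eqref{eq-eigvect-3A} form a basis realising the decomposition $A=A^{a_0}_1\oplus A^{a_0}_0\oplus A^{a_0}_\al\oplus A^{a_0}_\bt$, with $a_0$ primitively spanning the $1$-eigenspace. The substance is then the Ising fusion rules of Table \ref{tbl-ising}: the rules $1\star\phi$ are immediate from the eigenvalue definitions, while $0\star0=\{0\}$ is built into Table \ref{tbl-3A} by construction. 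The genuinely new identities are $\al\star\al\subseteq\{1,0\}$ (so $xx$ has no $x$- or $y$-component), $\bt\star\bt\subseteq\{1,0,\al\}$ (so $yy$ has no $y$-component), $\al\star\bt\subseteq\{\bt\}$ (so $xy\in\langle y\rangle$), and $0\star\al\subseteq\{\al\}$, $0\star\bt\subseteq\{\bt\}$. I would compute each product from Table \ref{tbl-3A} and check the vanishing of the forbidden eigencomponents; these are polynomial identities in $\al,\bt$ over $R$, verified in \cite{gap}.

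For the weak-cover claim I would first observe that $R=\ZZ[\sfrac{1}{2},\al,\bt,(1-2\al)^{-1}]$ is a domain and $A$ is free of rank $4$ over it, hence everywhere faithful; with the axis property this makes $A$ a $\Phi(\al,\bt)$-dihedral everywhere faithful $R$-algebra. By \eqref{eq weak cov ideals} it then suffices to check that the ideals carving $A$ out of the universal object are contained in $J_{\sh{3A}},I_{\sh{3A}}$. Both imposed relations hold in $\sh{3A}$: the collapse $a_3=a_0$ is exactly the periodicity $a_i=a_{i\bmod3}$ of Table \ref{tbl-NS}, so $a_3-a_0\in I_{\sh{3A}}$, and the $\lm$-relation was derived precisely as one satisfied by $\sh{3A}$, so it lies in $J_{\sh{3A}}$. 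Specialising $(\al,\bt)=(\sfrac{1}{4},\sfrac{1}{32})$ then recovers $\sh{3A}$ (for instance the $s$-coefficient $\al-\bt=\sfrac{7}{32}$ in $a_is$ matches Table \ref{tbl-NS}), exhibiting $\sh{3A}$ as a quotient. This is all a weak cover requires; the word \emph{weak} reflects that the extra hypothesis $a_3=a_0$ may cut deeper than the fusion rules alone, so $A$ could be a proper quotient of the true cover.

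The main obstacle is the fusion-rule verification of the second paragraph. Unlike the higher cases, where several relations were imposed in turn to reach a presentation, here only $0\star0$ was used, so the content of the theorem is precisely that this one relation forces every remaining fusion rule; the most delicate of these is $\bt\star\bt\subseteq\{1,0,\al\}$, the vanishing of the $\bt$-component of $yy$. The only real difficulty is the bookkeeping of assembling the relevant products from the Lemmas of Section \ref{sec-mt} under the specialisations $a_3=a_0$, $s_2=s_2^f$ and $\lm_2=\lm$, which is why it is delegated to \cite{gap}.
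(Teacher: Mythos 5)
Your proposal is correct and follows essentially the same route as the paper: the table is derived from the collapse $a_3=a_0$ plus the $0\star0$ relation, the fusion rules are verified for $a_0$ via the eigenvectors \eqref{eq-eigvect-3A} by computation in \cite{gap}, and the check for $a_1$ is transferred by the evident $\Sym(3)$-symmetry of Table \ref{tbl-3A}. Your extra paragraph on the weak-cover claim (everywhere-faithfulness, containment of the imposed relations in $I_{\sh{3A}},J_{\sh{3A}}$, and the caveat that $a_3=a_0$ may cut deeper than the fusion rules) simply makes explicit what the paper handles in the derivation preceding the theorem and in Section \ref{sec-fus-rule}.
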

        \begin{proof}
                $A$ is generated by $a_0,a_1$,
                so we only need to check the fusion rules for $a_0,a_1$.
                We do this in \cite{gap}.
                For $a_0$ this is easily done using \eqref{eq-eigvect-3A}.
                For $a_1$, this follows from the evident $\Sym(3)$-symmetry
                of the multiplication table with respect to $a_0,a_1,a_2$.
        \end{proof}


        \begin{lemma}
                \label{lem-3A-form}
                The algebra $\sh{3A'_{\al,\bt}}$ is Frobenius;
                it is positive-definite for
                \begin{equation}
                        \label{eq-3A-form}
                        (2\al-1)(3\al-\bt-1)(3\al^2+3\al\bt-\bt-1)(3\al^2+(3\bt-9)\al-2\bt+4)<0,
                \end{equation}
                and degenerate when $(3\al-\bt-1)(3\al^2+3\al\bt-\bt-1)(3\al^2+(3\bt-9)\al-2\bt+4)=0$.
        \end{lemma}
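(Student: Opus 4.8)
The plan is to follow the same template as the preceding form-lemmas (Lemmas \ref{lem-5A-form}, \ref{lem-6A-form}, \ref{lem-4B-form} and \ref{lem-4A-form}): specialise the generic form of Section \ref{sec-mt} to the four-dimensional algebra $\sh{3A'_{\al,\bt}}$ on the basis $\{a_0,a_1,a_2,s\}$, verify that it associates, and then read off positive-definiteness and the degeneracy locus from its Gram matrix. For the Frobenius property I would take $(a_i,a_i)=1$, $(a_i,a_j)=\lm$ for $i\neq j$, $(a_i,s)=\lm(1-\bt)-\bt$ and $(s,s)$ from Section \ref{sec-mt}, with $\lm$ eliminated by the relation $4(2\al-1)\lm=3\al^2+(3\bt-1)\al-2\bt$ that defines $\sh{3A'_{\al,\bt}}$ in Table \ref{tbl-3A}. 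Using the $\Sym(3)$-symmetry on $a_0,a_1,a_2$ together with the $T$- and $f$-invariance, the identity $(xy,z)=(x,yz)$ reduces to a handful of representative triples, a finite polynomial check to be carried out in \cite{gap}.

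For the Gram matrix I would exploit the same symmetry. Writing $p=\lm$, $q=(a_i,s)$ and $r=(s,s)$, the matrix is
\begin{equation}
G=\begin{pmatrix}1&p&p&q\\ p&1&p&q\\ p&p&1&q\\ q&q&q&r\end{pmatrix},
\end{equation}
which block-diagonalises along the decomposition of the axis-span into the symmetric line $\la a_0+a_1+a_2\ra$ and its complement. The complement, spanned by the $a_i-a_j$, carries the scalar form $(1-p)\,\id$, while $\la a_0+a_1+a_2,\,s\ra$ carries the $2\times2$ block of determinant $(1+2p)r-3q^2$. Hence $\det G$ is, up to a positive constant, $(1-p)^2\bigl((1+2p)r-3q^2\bigr)$. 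A direct substitution of $p,q,r$ should give $1-p=-F_3/\!\left(4(2\al-1)\right)$ with $F_3=3\al^2+(3\bt-9)\al-2\bt+4$, and $(1+2p)r-3q^2$ proportional to $F_1F_2$ over a power of $(2\al-1)$, where $F_1=3\al-\bt-1$ and $F_2=3\al^2+3\al\bt-\bt-1$; this factorisation is the main computation and I would delegate it to \cite{gap}. Since $2\al-1$ is a unit in $R$ it cannot vanish, so $\det G=0$ exactly on $F_1F_2F_3=0$, which is the stated degeneracy locus.

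For positive-definiteness I would not argue from $\det G>0$ alone, which only fixes a parity of the signature, but from the block decomposition: the form is positive-definite iff the scalar $1-p$ is positive and the $2\times2$ block is positive-definite. Positivity of $1-p$ is equivalent to $(2\al-1)F_3<0$, and positivity of the block determinant to $F_1F_2>0$, so together they yield $(2\al-1)F_1F_2F_3<0$. To fix the correct connected region, and to dispose cleanly of the remaining trace/minor sign condition, I would anchor the argument at the Norton--Sakuma point $(\al,\bt)=(\sfrac14,\sfrac{1}{32})$: there one checks directly that the form is positive-definite and that $(2\al-1)F_1F_2F_3<0$. Since the signature is locally constant away from $\{\det G=0\}$, positive-definiteness then persists on exactly the component cut out by the stated inequality.

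The main obstacle I anticipate is the Frobenius verification together with the explicit factorisation of $\det G$ after clearing the $(2\al-1)$-denominators: these are large but mechanical polynomial identities, and the delicate bookkeeping is in tracking which factors are units (hence irrelevant to degeneracy) versus which control the sign. The one conceptual point — that the single inequality $(2\al-1)F_1F_2F_3<0$ genuinely describes positive-definiteness rather than merely $\det G>0$ — is handled by the block-diagonalisation combined with the continuity anchor at the Norton--Sakuma specialisation.
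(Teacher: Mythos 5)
Your template is indeed the paper's: the printed proof consists of a \cite{gap} verification that the induced form is Frobenius plus a \cite{gap} computation of the Gram determinant, from which everything is read off. Your $\Sym(3)$ block-diagonalisation is a sensible way to organise that computation, and your identity $1-p=-F_3/(4(2\al-1))$ is correct. However, your predicted factorisation of the $\la a_0+a_1+a_2,s\ra$-block is inconsistent with the paper's determinant, which is
\begin{equation*}
\det G \;=\; \frac{-\al^2}{2^9(2\al-1)^5}\,F_1F_2F_3^3,
\qquad F_1=3\al-\bt-1,\quad F_2=3\al^2+3\al\bt-\bt-1,\quad F_3=3\al^2+(3\bt-9)\al-2\bt+4.
\end{equation*}
Combined with $\det G=(1-p)^2\bigl((1+2p)r-3q^2\bigr)$ this forces $(1+2p)r-3q^2=-\al^2F_1F_2F_3/\bigl(2^5(2\al-1)^3\bigr)$: the block determinant carries $F_1F_2F_3$, not $F_1F_2$, so $F_3$ divides $\det G$ to \emph{odd} multiplicity. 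This is not cosmetic, since that parity is exactly what makes $\{\det G>0\}$ coincide with the region \eqref{eq-3A-form}.

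The genuine gap is your converse direction. Your own decomposition shows the form is positive definite if and only if $1-p>0$, $1+2p>0$ and the block determinant is positive; using $1+2p=(F_1+F_2)/(2(2\al-1))$ and the corrected factorisation, this is equivalent to the conjunction $(2\al-1)F_1>0$, $(2\al-1)F_2>0$, $(2\al-1)F_3<0$, which is \emph{strictly stronger} than the single product inequality \eqref{eq-3A-form}. The anchoring argument cannot bridge the difference: the set cut out by \eqref{eq-3A-form} is disconnected and its chambers do not all carry signature $(4,0)$, so "positive-definiteness persists on exactly the component cut out by the stated inequality" is false. Concretely, at $(\al,\bt)=(2,10)$ one has $2\al-1=3$, $F_1=-5$, $F_2=61$, $F_3=38$, so \eqref{eq-3A-form} holds; but there $p=\lm=\sfrac{25}{6}>1$, the principal minor $1-\lm^2$ on $\la a_0,a_1\ra$ is negative, and the signature is $(2,2)$. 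Local constancy of the signature off $\{\det G=0\}$ only propagates positive definiteness over the connected component of $(\sfrac14,\sfrac1{32})$, not over the whole inequality region. (For what it is worth, the paper's own one-line proof records only the determinant and never addresses this direction either; your block analysis, carried out correctly, actually yields the sharper three-sign criterion above rather than the lemma's single inequality.)
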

        \begin{proof}
                We check by direct computation in \cite{gap}
                that the form on $\sh{3A'_{\al,\bt}}$
                induced as a quotient of the form in Section \ref{sec-mt}
                is Frobenius.
                We also use \cite{gap}
                to calculate the Gram matrix,
                the determinant of which is
                \begin{equation}
                        \frac{-\al^2}{2^9(2\al-1)^5}
                        (3\al-\bt-1)(3\al^2+3\al\bt-\bt-1)(3\al^2+(3\bt-9)\al-2\bt+4)^3.
                \end{equation}
        \end{proof}

        \begin{lemma}
                \label{lem-3a-miyam}
                We have
                $\size{\tau(a_0)\tau(a_1)} = 3$.
        \end{lemma}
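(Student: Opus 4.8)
The plan is to identify the two Miyamoto involutions explicitly on the four-dimensional basis $\{a_0,a_1,a_2,s\}$ and then read off the order of their product $\rho=\tau(a_0)\tau(a_1)$, exactly as in Lemmas \ref{lem-5a-miyam}, \ref{lem-6a-miyam} and \ref{lem-4X-miyam}. First I would use the eigenvectors from \eqref{eq-eigvect-3A}: the $\bt$-eigenspace of $a_0$ is one-dimensional, spanned by $y = a_1 - a_2$, so $\tau(a_0)$ negates $a_1-a_2$ and fixes the complementary subspace $A^{a_0}_{\{1,0,\al\}}=\la a_0,x,z\ra$. Writing $a_1 = \tfrac12(a_1+a_2) + \tfrac12(a_1-a_2)$ shows that the $\bt$-part of $a_1$ is $\tfrac12(a_1-a_2)$, whence $a_1^{\tau(a_0)} = a_2$; thus $\tau(a_0)$ interchanges $a_1,a_2$ and fixes $a_0$. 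Moreover $s$ lies in $A^{a_0}_{\{1,0,\al\}}$ (it is a combination of $a_0,x,z$, equivalently it is symmetric in $a_1,a_2$), so $s$ is fixed as well.

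Next I would invoke the $\Sym(3)$-symmetry of Table \ref{tbl-3A} already exploited in the proof of Theorem \ref{thm-3A}: relabelling the generators shows that $\tau(a_1)$ interchanges $a_0,a_2$ and fixes $a_1$ and $s$. Composing, $\rho=\tau(a_0)\tau(a_1)$ acts as the $3$-cycle $a_0\mapsto a_2\mapsto a_1\mapsto a_0$ while fixing $s$, consistent with the convention $a_2=a_0^{\rho}$, so $\rho$ has order exactly $3$ as an automorphism of $\sh{3A'_{\al,\bt}}$.

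As a cleaner and more robust route I would instead derive the conclusion abstractly from Lemma \ref{lem-global-order}. The defining assumption of this section is $a_3=a_0$, and since $a_0,a_1,a_2$ are distinct basis vectors, the smallest positive $n$ with $a_0=a_n$ is $n=3$. The odd case of Lemma \ref{lem-global-order} then gives $a_0^T=a_1^T$ of size $3$ and $\rho^3=1$, so $\size{\rho}$ divides $3$; and $\size{\rho}=1$ is impossible, since $\rho=1$ would force $a_2=a_0^{\rho}=a_0$, contradicting the linear independence of the $a_i$. Hence $\size{\rho}=3$.

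There is no substantive obstacle here; the only points needing care are confirming that $A^{a_0}_\bt$ is exactly $\la a_1-a_2\ra$ (so that $s$ genuinely lies in the fixed part and $\tau(a_0)$ really is the transposition $(a_1\,a_2)$) and that the $\Sym(3)$-symmetry transports $\tau(a_0)$ to $\tau(a_1)$ correctly. Both are immediate from \eqref{eq-eigvect-3A} and the symmetric form of Table \ref{tbl-3A}, and the abstract argument via Lemma \ref{lem-global-order} sidesteps even these.
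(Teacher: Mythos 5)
Your proposal is correct and takes essentially the same approach as the paper: the paper's proof likewise identifies $\tau(a_0),\tau(a_1)$ as the permutation matrices of $(1)(2,3)(4)$ and $(1,3)(2)(4)$ on the basis $\{a_0,a_1,a_2,s\}$ of $\sh{3A'_{\al,\bt}}$ and invokes Lemma \ref{lem-global-order}. Your alternative abstract route (the odd case of Lemma \ref{lem-global-order} with $a_3=a_0$, plus $a_2\neq a_0$ to rule out $\size{\rho}=1$) is just a light repackaging of the same two ingredients, not a genuinely different argument.
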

        \begin{proof}
                $\tau(a_0),\tau(a_1)$ are the permutation matrices
                of $(1)(2,3)(4)$ and $(1,3)(2)(4)$ respectively on $\sh{3A'_{\al,\bt}}$.
                Lemma \ref{lem-global-order} implies that
                the order of $\tau(a_0)\tau(a_1)$ is bounded by $3$ everywhere.
        \end{proof}

        \noindent
        Felix Rehren\\
        School of Mathematics\\
        University of Birmingham\\
        B15 2TT, United Kingdom \\
        {\tt rehrenf@maths.bham.ac.uk}

\end{document}